\newtheorem{Th}{Theorem}[section]
\newtheorem{Prop}[Th]{Proposition}
\newtheorem{Lem}[Th]{Lemma}
\newtheorem{Rem}[Th]{Remark}
\newtheorem*{DefBL}{Banach lattice}
\newtheorem*{DefKFS}{K\"othe function space}
\newtheorem*{DefHLP}{Hardy-Littlewood property}
\newtheorem*{DefFP}{Fatou property}
\newtheorem*{Rem*}{Remark}
\newcommand{\Rn}{\mathbb{R}^n}
\newcommand{\NN}{\mathbb{N}}
\newcommand{\CA}{\mathcal{A}}
\title
[The Hardy-Littlewood property and maximal operators]
{The Hardy-Littlewood property and maximal operators associated with the inverse Gauss measure }
\author[J. J. Betancor]{J. J. Betancor}
\address{Jorge J. Betancor\newline
	Departamento de An\'alisis Matem\'atico, Universidad de La Laguna,\newline
	Campus de Anchieta, Avda. Astrof\'isico S\'anchez, s/n,\newline
	38721 La Laguna (Sta. Cruz de Tenerife), Spain}
\email{jbetanco@ull.es}
\author[A. J. Castro]{A. J. Castro}
\address{\newline
       Alejandro J. Castro \newline
       Department of  Mathematics,
       Nazarbayev University, \newline
		010000 Nur-Sultan, Kazakhstan}
\email{alejandro.castilla@nu.edu.kz}
\author[M. De Le\'on-Contreras]{M. De Le\'on-Contreras}
\address{\newline
       Marta De Le\'on-Contreras \newline
       Department of  Mathematics and Statistics,
       University of Reading, \newline
	   Reading RG6 6AX, United Kingdom}
\email{m.deleoncontreras@reading.ac.uk }
\keywords{Hardy-Littlewood property, Maximal operators, inverse Gauss measure}
\subjclass[2010]
{42B25, 46B42}
 \thanks{
J. J. B. was partially supported by PID2019-106093GB-I00,
A. J. C.  by the Nazarbayev University FDCRGP  110119FD4544 and 
M. D L-C by EPSRC Research Grant EP/S029486/1.}
\begin{document}

\maketitle

\begin{abstract}
In this paper we characterize the Banach lattices with the Hardy-Littlewood property by using maximal operators defined by semigroups of operators associated with the inverse Gauss measure.
\end{abstract}

\setcounter{secnumdepth}{3}
\setcounter{tocdepth}{3}


\section{Introduction}

We consider the Euclidean space $\Rn$ endowed with the measure whose density $\gamma_{-1}$ with respect to the Lebesgue measure is $$
\gamma_{-1}(x)
:=\pi^{n/2} e^{|x|^2},
\quad x \in \Rn.
$$
This measure is called \textit{inverse Gauss measure}.
We recall that the \textit{Gauss measure} is the one defined by the density function
$$
\gamma_{1}(x)
:=\pi^{n/2} e^{-|x|^2},
\quad x \in \Rn,
$$
with respect to the Lebesgue measure. Note that the Gauss measure is a probability measure on $\Rn$, while the inverse Gauss measure is not even finite in $\Rn$. Furthermore, if for every $x \in \Rn$ and $r>0$, $B(x,r)$ denotes the ball in $\Rn$ with center at $x$ and radius $r$, the measure $\gamma_{-1}(B(x,r))$ of $B(x,r)$ grows more than exponentially with $r$, as $r$ tends to $\infty$. These facts make it interesting to study harmonic analysis in the inverse Gaussian setting $(\Rn, \gamma_{-1})$. Some ideas and methods that are needed in $(\Rn, \gamma_{-1})$ can be useful in the study of harmonic analysis on manifolds where the volume growth is superexponential.\\

The \textit{Dirichlet form} $Q_{\gamma_{-1}}$ associated with the inverse Gauss measure is defined by
$$
Q_{\gamma_{-1}}(f)
:= \frac{1}{2}
\int_{\Rn} |\nabla f(x)|^2 \, \gamma_{-1}(x) dx,
\quad f \in C_c^\infty(\Rn),
$$
where as usual $C_c^\infty(\Rn)$ denotes the space of smooth functions with compact support in $\Rn$. $Q_{\gamma_{-1}}$ defines the operator
$$
\mathcal{A}(f)
:= - \frac{1}{2} \Delta f - x \cdot \nabla f,
\quad f \in C_c^\infty(\Rn).
$$
The Dirichlet form $Q_{\gamma_{1}}$ for the Gauss measure defines the \textit{Ornstein-Uhlenbeck} operator,
$$
\mathcal{L}(f)
:= - \frac{1}{2} \Delta f + x \cdot \nabla f,
\quad f \in C_c^\infty(\Rn).
$$

The harmonic analysis for the Ornstein-Uhlenbeck operator was firstly studied by B. Muckenhoupt (\cite{Mu}) and ever since has been an active work area (see for instance
\cite{
AFS,
Br1,
CMM,
DS,
FGS,
FHS,
FS,
GCMMST1,
GCMMST2,
GCMST2,
GCMST1,
Gu,
GST,
HMM,
Ke,
MMS,
MPS,
Me,
Pe,
PS,
Po,
Sj1,
Ur1}).
Recently, W. Urbina (\cite{Urb2}) has published a monograph about Gaussian harmonic analysis.\\

The connection between the Gaussian measure and the Ornstein-Uhlenbeck operator motivated F. Salogni (\cite{Sa}) to begin the study of harmonic analysis in the inverse Gaussian measure setting. The semigroup of operators
$\{T_t^\mathcal{A}:=e^{-t \mathcal{A}}\}_{t>0}$ generated by the operator $-\mathcal{A}$ is a symmetric diffusion semigroup in the sense of E. Stein (\cite{StLP}). $L^p(\Rn,\gamma_{-1})$-boundedness properties of the maximal operator $T_*^\mathcal{A}$ defined by
$$
T_*^{\mathcal{A}}(f)
:= \sup_{t>0} |T_t^\mathcal{A}(f)|,
$$
were established in \cite[Theorem 3.3.6]{Sa}.
In \cite{Br2}, T. Bruno introduces a new Hardy space $H^1$ and obtains endpoint ($p=1$) results for the imaginary powers $\mathcal{A}^{i \gamma}$, $\gamma \in \mathbb{R}$, and also for certain Riesz transforms. The results concerning Riesz transforms associated with $\mathcal{A}$ were completed in
\cite[Theorem 1.1]{BrSj}.\\

We now recall some definitions and results about the operator $\mathcal{A}$ and the semigroups of operators generated by $-\mathcal{A}$ and $-\sqrt{\mathcal{A}}$ that we will need in the sequel.\\

For every $k \in \NN$ we denote by $H_k$ the $k$-th \textit{Hermite polynomial} defined by
$$
H_k(z)
:=
(-1)^k e^{z^2}\frac{d^k}{dz^k}e^{-z^2},
\quad z \in \mathbb{R}.$$
For every $k=(k_1, \dots, k_n) \in \NN^n$ we define
$$
H_k(x)
:=
\prod_{i=1}^n H_{k_i}(x_i),
\quad x=(x_1, \dots, x_n) \in \Rn,$$
and
$$
\widetilde{H}_k(x)
:=
\gamma_1(x) H_k(x).$$
We have that, for every $k=(k_1, \dots, k_n) \in \NN^n$,
$$
\mathcal{A}\widetilde{H}_k
=
(k_1 + \dots + k_n + n) \widetilde{H}_k.
$$
We define the operator $\widetilde{\mathcal{A}}$
as follows
$$
\widetilde{\mathcal{A}}f
:=
\sum_{k=(k_1, \dots, k_n) \in \NN^n}
c_k(f) (k_1 + \dots + k_n + n) \widetilde{H}_k,
\quad f \in D(\widetilde{\mathcal{A}}),$$
where
$$
c_k(f)
:= \frac{1}{\|\widetilde{H}_k\|_{L^2(\Rn,\gamma_{-1})}^2}
\int_{\Rn} f(x) \widetilde{H}_k(x) \, \gamma_{-1}(x) dx
$$
and
$$
D(\widetilde{\mathcal{A}})
:= \Big\{
f \in L^2(\Rn,\gamma_{-1}) \text{ : }
\sum_{k=(k_1, \dots, k_n) \in \NN^n}
|c_k(f)|^2  (k_1 + \dots + k_n + n)^2
\|\widetilde{H}_k\|_{L^2(\Rn,\gamma_{-1})}^2
< \infty
\Big\}.
$$
We can see that
$C^\infty_c(\Rn) \subset D(\widetilde{\mathcal{A}})$
and that
$$
\mathcal{A}f
=
\widetilde{\mathcal{A}}f,
\quad f \in C^\infty_c(\Rn).
$$
With a slight abuse of notation, we identify
$\mathcal{A}$ with $\widetilde{\mathcal{A}}$.\\

By
$\{T_t^\mathcal{A}\}_{t>0}$
we denote the semigroup of operators generated by $-\mathcal{A}$. For every $t>0$ and $1 \leq p \leq \infty$ the operator $T_t^\mathcal{A}$ admits the following integral representation
$$
T_t^\mathcal{A}(f)(x)
:=
\int_{\Rn} T_t^\mathcal{A}(x,y) f(y) \, dy,
\quad x \in \Rn,
$$
for every $f \in L^p(\Rn,\gamma_{-1})$, where
$$
T_t^\mathcal{A}(x,y)
:=
\frac{e^{-nt}}{\pi^{n/2}(1-e^{-2t})^{n/2}}
\exp\Big( \frac{-|x-e^{-t}y|^2}{1-e^{-2t}} \Big), \quad x, y \in \Rn, \quad t>0.$$
$\{T_t^\mathcal{A}\}_{t>0}$ is a symmetric diffusion semigroup in $L^p(\Rn,\gamma_{-1})$,
$1 \leq p \leq \infty$.\\

By using the subordination formula,
if $\{P_t^\mathcal{A}\}_{t>0}$ denotes the semigroup of operators generated by $-\sqrt{\mathcal{A}}$ we have that
\begin{equation}\label{eq:subordination}
P_t^\mathcal{A}(f)
:=
\frac{t}{2\sqrt{\pi}}
\int_0^\infty
\frac{ e^{-t^2/4u}}{u^{3/2}}
T_u^\mathcal{A}(f)
\, du,
\quad t>0.
\end{equation}

\quad \\
For every $k \in \NN$ we consider the
\textit{maximal operators}
$$
T_{*,k}^\mathcal{A}(f)
:=
\sup_{t>0}
| t^k \partial_t^k T_t^\mathcal{A}(f)|
$$
and
$$
P_{*,k}^\mathcal{A}(f)
:=
\sup_{t>0}
| t^k \partial_t^k P_t^\mathcal{A}(f)|.
$$
Since $\{T_t^\mathcal{A}\}_{t>0}$ is a diffusion semigroup, according to \cite[Corollary 4.2]{LeXu}, for every $k \in \NN$, the maximal operators
$T_{*,k}^\mathcal{A}$
and
$P_{*,k}^\mathcal{A}$
are bounded from $L^p(\Rn,\gamma_{-1})$ into itself, for each $1<p<\infty$.
Furthermore, the ideas in \cite[pp. 472--473]{LiSj} imply that $P_{*,k}^\mathcal{A}$ is also bounded from
$L^1(\Rn,\gamma_{-1})$ into
$L^{1,\infty}(\Rn,\gamma_{-1})$, for every $k \in \mathbb{N}$. Salogni (\cite[Theorem 3.3.6]{Sa}) proved that $T_{*}^\mathcal{A}=T_{*,0}^\mathcal{A}$ is bounded from $L^1(\Rn,\gamma_{-1})$ into
$L^{1,\infty}(\Rn,\gamma_{-1})$. 

\begin{Rem*}
As a consequence of our
results (see Theorem \ref{Th:1.1} below) we can deduce that $T_{*,1}^\mathcal{A}$ is bounded from $L^1(\Rn,\gamma_{-1})$ into
$L^{1,\infty}(\Rn,\gamma_{-1})$. At this moment we do not know if $T_{*,k}^\mathcal{A}$ is bounded from $L^1(\Rn,\gamma_{-1})$ into
$L^{1,\infty}(\Rn,\gamma_{-1})$, when $k \geq 2$. As far as we know in the Ornstein-Uhlenbeck setting the question is also open for $k \geq 1$.
\end{Rem*}

Also we consider the
\textit{centered Hardy-Littlewood maximal function} associated with the inverse Gaussian measure
$\mathcal{M}_{\gamma_{-1}}$ defined by
$$
\mathcal{M}_{\gamma_{-1}}(f)(x)
:=
\sup_{r>0} \frac{1}{\gamma_{-1}(B(x,r))}
\int_{B(x,r)} |f(y)| \, \gamma_{-1}(y) dy,
\quad x \in \Rn.
$$
In this paper we characterize the Banach lattices with the Hardy-Littlewood property in terms of the $L^p(\Rn,\gamma_{-1})$-boundedness properties 
of the maximal function
$\mathcal{M}_{\gamma_{-1}}$ and the maximal operators defined by the semigroups
$\{T_t^\mathcal{A}\}_{t>0}$
and
$\{P_t^\mathcal{A}\}_{t>0}$
(Theorems \ref{Th:1.3}, \ref{Th:1.1} and \ref{Th:1.2}).
Our study is motivated by \cite{HTV1} in the Gaussian context.\\

Before stating our results we need to recall some definitions and properties about Banach lattices and the Hardy-Littlewood property.

\begin{DefBL}
A \textit{Banach lattice} is a real Banach space $X$ endowed with an order relation $\leq$ satisfying the following properties. Let $x,y,z \in X$ and $a \in \mathbb{R}$, $a \geq 0$,\\
\begin{itemize}
\item[$(i)$]
$x \leq y
\, \Rightarrow \,
x+z \leq y+z$;\\

\item[$(ii)$]
$ax \geq 0$ provided that $x \geq 0$ in $X$;\\

\item[$(iii)$]
there exist the least upper bound $\sup\{x,y\}$ and the greatest lower bound $\inf\{x,y\}$ for $x$ and $y$;\\

\item[$(iv)$] if we define
$|w|
:= \sup\{-w,w\}$, $w \in X$,
$\|x\| \leq \|y\|$ provided that $|x| \leq |y|$.\\
\end{itemize}
\end{DefBL}

\begin{DefKFS}
Let $(\Omega,\Sigma,\mu)$ be a complete $\sigma$-finite measure space. A Banach space $X$ consisting of equivalence classes, modulo equality almost everywhere, of locally integrable, real valued functions in $\Omega$ is said to be a \textit{K\"othe function space} when the following properties hold\\
\begin{itemize}
\item[$(\alpha)$] If $|f(w)| \leq |g(w)|$, a.e. $w \in \Omega$, with $f$ measurable and $g \in X$, then $f \in X$ and $\|f\| \leq \|g\|$.\\

\item[$(\beta)$] For every $E \in \Sigma$ such that $\mu(E)<\infty$, $\chi_E \in X$.\\
\end{itemize}
\end{DefKFS}

Here $\chi_E$ denotes the characteristic function over the set $E$.\\

Every K\"othe space is a Banach lattice with the obvious order,
$$
f \geq 0
\quad \text{when}\quad
f(w) \geq 0, \quad \text{a.e. } w \in \Omega.$$
This lattice is $\sigma$-order complete.
Moreover, each order continuous Banach lattice with weak unit is order isometric to a K\"othe function space
(\cite[Theorem 1.b.14]{LT}). Then, a separable Banach lattice is order isometric to a K\"othe function space if, and only if, it is $\sigma$-order complete. This fact justifies why we establish our results for K\"othe function spaces. If $X$ is a K\"othe function space, we denote by $X'$ the space  of all the integrals in $X$ (\cite[p. 29]{LT}).

\begin{DefHLP}
Let $n \in \mathbb{N}$ and $X$ be a Banach lattice.
Assume that $J$ is a finite subset of rational positive numbers. If $f : \Rn \longrightarrow X$ is a locally integrable function, we define
$$
\mathcal{M}^X_J(f)(x)
:=
\sup_{r \in J} \frac{1}{|B(x,r)|}
\int_{B(x,r)} |f(y)| dy.
$$
A Banach lattice $X$ is said to have the
\textit{Hardy-Littlewood property} if there exists $1<p<\infty$ such that, for every finite subset $J$ of $\mathbb{Q}_+$, $\mathcal{M}^X_J$ is bounded from
$L^p(\Rn,X)$ into itself and
$$
\sup_{\substack{
J \subset \mathbb{Q}_+\\
J \text{ finite}}}
\|\mathcal{M}^X_J\|_{L^p(\Rn,X) \to L^p(\Rn,X)}
< \infty.
$$
\end{DefHLP}

The Hardy-Littlewood property for a Banach lattice does not depend on the dimension $n$.\\

For a general Banach lattice $X$ the supremum must be taken over finite sets $J$. When $X$ is a K\"othe function space this restriction is not needed. Suppose that $X$ is a  K\"othe function space over
$(\Omega,\Sigma,\mu)$. Every function
$f : \Rn \longrightarrow X$ is understood as a two variables function
$(x,w) \in \Rn \times \Omega \longmapsto f(x,w) \in \mathbb{R}$. We define now the 
\textit{centered Hardy-Littlewood maximal function} with respect to the first variable as follows
\begin{equation}\label{MaxHL}
\mathcal{M}^X(f)(x,w)
:=
\sup_{r >0} \frac{1}{|B(x,r)|}
\int_{B(x,r)} |f(y,w)| dy,
\quad x \in \Rn \text{ and } w \in \Omega,
\end{equation}
where $f : \Rn \longrightarrow X$ is locally integrable.
J. Bourgain (\cite{Bo}) proved that a K\"othe function space $X$ has the UMD property if, and only if, $\mathcal{M}^X$ is bounded from $L^p(\Rn,X)$ into itself and from $L^{p'}(\Rn,X^*)$ into itself, for some
(equivalently, for every) $1<p<\infty$, where $X^*$ denotes the dual space of $X$ and $p'=p/(p-1)$.

\begin{DefFP}
It is said that a K\"othe function space has the
\textit{Fatou property} when the following property holds. If, for every $n \in \mathbb{N}$, $f_n \in X$ and $f_n(w)\geq 0$, a.e. $w \in \Omega$, satisfying that
$f_n(w) \uparrow f(w)$, a.e. $w \in \Omega$, as $n \to \infty$, and $\sup_{n \in \mathbb{N}} \|f_n\|_X<\infty$, then $f \in X$ and $\|f\|_X = \lim_{n \to \infty} \|f_n\|_X$.
\end{DefFP}

By using Lebesgue's monotone convergence theorem we can see that a K\"othe function space $X$ having the Fatou property has the Hardy-Littlewood property if, and only if, there exists $1<p_0<\infty$ such that $\mathcal{M}^X$ is bounded from $L^{p_0}(\Rn,X)$ into itself. In \cite[Theorem 1.7]{GCMT1} it was proved that this condition is equivalent to the boundedness of $\mathcal{M}^X$ from $L^p(\Rn,X)$ into itself, for every $1<p<\infty$, and to the boundedness of $\mathcal{M}^X$ from $L^1(\Rn,X)$ into $L^{1,\infty}(\Rn,X)$. \\

The Hardy-Littlewood property for Banach lattices has been investigated in \cite{GCMT1}, \cite{GCMT2} and \cite{HMST} (see also \cite{RbF}). In \cite{HTV1} the Hardy-Littlewood property for K\"othe function spaces was characterized by using maximal operators associated with Ornstein-Uhlenbeck operators with respect to the Gauss measure. In this paper we obtain new characterizations of the K\"othe function spaces with the Hardy-Littlewood property via maximal operators associated with the semigroups
$\{T_t^\mathcal{A}\}_{t>0}$
and
$\{P_t^\mathcal{A}\}_{t>0}$ (Theorems \ref{Th:1.1} and \ref{Th:1.2}).\\

Suppose that $g : (0,\infty) \longrightarrow \mathbb{R}$ is $C^m(0,\infty)$ where $m \in \mathbb{N}$ and that
$m-1 \leq \alpha < m$.
The \textit{Weyl fractional derivative} of order $\alpha$,
$D^\alpha g$, is defined by
$$
D^\alpha g(t)
:=
\frac{1}{\Gamma(m-\alpha)}
\int_0^\infty g^{(m)}(t+s) s^{m-\alpha-1} \, ds,
\quad t \in (0,\infty).
$$
If $X$ is a K\"othe function space and $f \in L^p(\Rn,X)$, $1 \leq p < \infty$, we define
$$
\mathcal{M}^X_{\gamma_{-1}}(f)(x,w)
:=
\sup_{r>0} \frac{1}{\gamma_{-1}(B(x,r))}
\Big|
\int_{B(x,r)} f(y,w) \, \gamma_{-1}(y) dy \Big|,
\quad x \in \Rn, \quad w \in \Omega,
$$
and, for every $\alpha>0$,
$$
T_{*,\alpha}^\mathcal{A}(f)(x,w)
:=
\sup_{t>0}
| t^\alpha \partial_t^\alpha T_t^\mathcal{A}(f(\cdot,w))(x)|,
\quad x \in \Rn, \quad w \in \Omega,
$$
and
$$
P_{*,\alpha}^\mathcal{A}(f)(x,w)
:=
\sup_{t>0}
| t^\alpha \partial_t^\alpha P_t^\mathcal{A}(f(\cdot,w))(x)|,
\quad x \in \Rn, \quad w \in \Omega.
$$

\quad \\
From now on, when it is understood from the context, we will use the notation $S(f)(x):=S(f)(x,\cdot)$, $x\in\Rn,$ for any operator $S$ appearing along the paper.

We are finally in position of stating our results.

\begin{Th}\label{Th:1.3}
Let $X$ be a K\"othe function space with the Fatou property. The following assertions are equivalent.
\begin{itemize}
\item[$(a)$]
$X$ has the Hardy-Littlewood property.

\item[$(b)$]
$\mathcal{M}^X_{\gamma_{-1}}$ is bounded from
$L^1_X(\Rn,\gamma_{-1})$ into
$L^{1,\infty}_X(\Rn,\gamma_{-1})$.

\item[$(c)$]
For every $f \in L^1_X(\Rn,\gamma_{-1})$,
$\mathcal{M}^X_{\gamma_{-1}}f(x) \in X$ for almost all $x \in \Rn$.\\
\end{itemize}
\end{Th}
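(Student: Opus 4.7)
The proof will establish the cycle (a) $\Rightarrow$ (b) $\Rightarrow$ (c) $\Rightarrow$ (a), with the common ingredient being a local--global decomposition tied to the admissibility radius $m(x):=\min\{1,1/(1+|x|)\}$. The underlying geometric fact is that for $r\le m(x)$ the density $\gamma_{-1}(y)=\pi^{n/2}e^{|y|^2}$ is comparable to $\gamma_{-1}(x)$ on $B(x,r)$, so $\gamma_{-1}(B(x,r))\sim\gamma_{-1}(x)|B(x,r)|$ and $\gamma_{-1}$-averages agree with Lebesgue averages up to constants. For (a) $\Rightarrow$ (b) I would split $\mathcal{M}^X_{\gamma_{-1}}\le \mathcal{M}^{X,\mathrm{loc}}_{\gamma_{-1}}+\mathcal{M}^{X,\mathrm{glob}}_{\gamma_{-1}}$ according to $r\le m(x)$ versus $r>m(x)$. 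Via a Whitney-type covering of $\Rn$ by admissible balls $\{B(x_j,m(x_j))\}_j$ whose doubles have bounded overlap, the local $\gamma_{-1}$-weak-$(1,1)$ estimate on each cell reduces, modulo constants that are uniform in $j$, to the unweighted weak-$(1,1)$ estimate for the Lebesgue-type maximal operator $\mathcal{M}^X$ of \eqref{MaxHL} on the same cell, which under (a) is available through the Fatou property and \cite[Theorem 1.7]{GCMT1}. Summing over the cover produces the weak-$(1,1)$ bound for the local piece. For the global piece I would adapt the scalar kernel estimates of Salogni \cite[Theorem 3.3.6]{Sa}: these ultimately yield a pointwise, $X$-independent domination of $\mathcal{M}^{X,\mathrm{glob}}_{\gamma_{-1}}f(x,w)$ by a scalar operator acting on $\|f(\cdot,w)\|_X$ that is itself of weak type $(1,1)$, and the passage to the $X$-valued setting is routine.

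The implication (b) $\Rightarrow$ (c) is immediate, since membership in $L^{1,\infty}_X(\Rn,\gamma_{-1})$ forces the $X$-norm to be finite almost everywhere. For (c) $\Rightarrow$ (a) the plan is to apply a Nikishin-type factorization theorem to the sublinear, positively homogeneous operator $\mathcal{M}^X_{\gamma_{-1}}:L^1_X(\Rn,\gamma_{-1})\to L^0_X(\Rn,\gamma_{-1})$, which (c) guarantees to be well-defined; this yields boundedness from $L^1_X(\Rn,\gamma_{-1})$ into $L^{1,\infty}_X(\Rn,\gamma_{-1})$. Restricting test functions to a small ball $B_0$ on which $\gamma_{-1}$ is pinched between two positive constants converts the weighted weak-$(1,1)$ bound into an unweighted weak-$(1,1)$ bound for $\mathcal{M}^X$ on $L^1(B_0,X)$, and since the Hardy-Littlewood property is dimension- and translation-free, a standard scaling argument lifts this to all of $\Rn$. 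One final application of \cite[Theorem 1.7]{GCMT1} then recovers (a).

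The main obstacle will be the global part of (a) $\Rightarrow$ (b): the exponential growth of $\gamma_{-1}$ makes $(\Rn,\gamma_{-1})$ non-doubling, so the usual Calder\'on--Zygmund machinery is unavailable, and one must rely on delicate level-set Gaussian computations in the spirit of \cite{Sa}, while also checking that the resulting pointwise bounds depend on $f$ only through quantities that are uniform in the parameter $w\in\Omega$ indexing the K\"othe space.
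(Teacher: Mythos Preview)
Your overall architecture---local/global splitting governed by the admissibility radius, with the local piece compared to the Euclidean maximal function and the global piece handled by scalar kernel bounds---matches the paper's. A few points where the paper proceeds differently, and one place where your sketch needs repair.

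\textbf{Global part.} Salogni's Theorem~3.3.6 treats the semigroup maximal operator $T^{\mathcal{A}}_*$, not $\mathcal{M}_{\gamma_{-1}}$; the paper instead proves a dedicated estimate (Lemma~\ref{Lem:Q}) by bounding $\gamma_{-1}(B(x,R))$ from below and showing that $\mathcal{M}^X_{\gamma_{-1},glob}$ is dominated pointwise by a positive kernel operator $\mathbb{Q}$ that is \emph{strong} $L^1(\gamma_{-1})\to L^1(\gamma_{-1})$, not merely weak type. This strong bound is what makes the paper's $(c)\Rightarrow(a)$ argument clean: one can subtract off the global part and still land in $X$ almost everywhere.

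\textbf{Local part.} The Whitney covering is heavier than necessary. The paper just uses $e^{|y|^2}\approx e^{|x|^2}$ on the local region to get the two-sided pointwise comparisons $\mathcal{M}^X_{\gamma_{-1},loc}\lesssim\mathcal{M}^X$ and $\mathcal{M}^X_{loc}\lesssim\mathcal{M}^X_{\gamma_{-1},loc}$ directly, and invokes \cite[Proposition~3.2.5]{Sa} to transfer local $L^p$-boundedness between $\gamma_{-1}$ and Lebesgue measure.

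\textbf{The Nikishin step in $(c)\Rightarrow(a)$.} Your claim here is too strong. Nikishin factorization applied to $\mathcal{M}^X_{\gamma_{-1}}$ yields a weak-$(1,1)$ bound only with respect to some weight $w\,d\gamma_{-1}$, and $\mathcal{M}^X_{\gamma_{-1}}$ has no translation or dilation invariance to remove $w$. The paper avoids this entirely: using the strong $L^1$ bound on the global part and the local pointwise comparison, it converts hypothesis $(c)$ into the statement that $\mathcal{M}^X f(x)\in X$ a.e.\ for every $f\in L^1_X(\Rn,dx)$, and then invokes \cite[Proposition~4.12]{HTV1}---which is the Nikishin-type result for the \emph{Euclidean} maximal function, where the needed invariances are available. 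Your route can be salvaged by carrying the Nikishin weight into the small-ball restriction (choose $B_0$ where $w$ is also bounded below), but the paper's transfer-first approach is cleaner and avoids the issue altogether.
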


\begin{Th}\label{Th:1.1}
Let $X$ be a K\"othe function space  with the Fatou property, $1<p<\infty$ and $\alpha \in [0,1]$. Consider the following assertions.
\begin{itemize}
\item[$(a)$]
$X$ has the Hardy-Littlewood property.

\item[$(b_\alpha)$]
$T_{*,\alpha}^\mathcal{A}$ is bounded from
$L^p_X(\Rn,\gamma_{-1})$ into itself.

\item[$(c_\alpha)$]
$T_{*,\alpha}^\mathcal{A}$ is bounded from
$L^1_X(\Rn,\gamma_{-1})$ into
$L^{1,\infty}_X(\Rn,\gamma_{-1})$.

\item[$(d_\alpha)$]
For every $f \in L^p_X(\Rn,\gamma_{-1})$,
$T_{*,\alpha}^\mathcal{A}(f)(x) \in X$ for almost all $x \in \Rn$.

\item[$(e_\alpha)$]
For every $f \in L^1_X(\Rn,\gamma_{-1})$,
$T_{*,\alpha}^\mathcal{A}(f)(x) \in X$ for almost all $x \in \Rn$.
\end{itemize}
Then, when
\begin{itemize}
\item $\alpha=0$,
$(a) \Leftrightarrow (b_\alpha) \Leftrightarrow (c_\alpha) \Leftrightarrow (d_\alpha) \Leftrightarrow (e_\alpha)$;

\item $\alpha \in (0,1]$,
$(a) \Rightarrow (b_\alpha), \: (c_\alpha), \:(d_\alpha) \text{ and } (e_\alpha)$.\\




\end{itemize}
\end{Th}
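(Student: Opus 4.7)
The plan is to organise the chain of implications into two layers. The easy layer is $(b_\alpha)\Rightarrow(d_\alpha)$ and $(c_\alpha)\Rightarrow(e_\alpha)$, which follow immediately from the definition of strong and weak $L^p_X$ boundedness, since any $L^p_X$- or $L^{1,\infty}_X$-valued function is a.e.\ in $X$. The substantive content therefore reduces to the main direction $(a)\Rightarrow(b_\alpha),(c_\alpha)$ (valid for all $\alpha\in[0,1]$), together with the closures $(d_0)\Rightarrow(a)$ and $(e_0)\Rightarrow(a)$ needed only for $\alpha=0$.

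For the main direction $(a)\Rightarrow(b_\alpha),(c_\alpha)$ I would split the kernel of $t^\alpha\partial_t^\alpha T_t^{\mathcal{A}}$ according to an admissible region suited to the inverse Gaussian setting, for instance
\[
L := \{(x,y)\in\Rn\times\Rn : |x-y|\leq c\min(1,1/(1+|x|))\}.
\]
On the local region one uses the explicit kernel $T_t^{\mathcal{A}}(x,y)$, the fact that on $L$ the quantity $|x-e^{-t}y|$ is comparable to $|x-y|$ up to a controllable error, and classical heat-kernel arguments to derive a pointwise vector-valued domination $T^{\mathcal{A}}_{*,\alpha,\mathrm{loc}}(f)(x)\lesssim \mathcal{M}^X(f)(x)$. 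Hypothesis $(a)$ combined with the characterisation from \cite[Theorem 1.7]{GCMT1} recalled after the Fatou property then yields the $L^p_X$ and $L^1_X\to L^{1,\infty}_X$ bounds for the local piece. On the global region the kernel has Gaussian decay strong enough to absorb the growth of $\gamma_{-1}$, so I would prove uniform Schur-type bounds showing that the global maximal operator is unconditionally bounded on $L^p_X$ and from $L^1_X$ to $L^1_X$, without invoking the HL property. The fractional-derivative case reduces to the integer one via the Weyl representation
\[
t^\alpha\partial_t^\alpha T_t^{\mathcal{A}}(f)(x)=\frac{t^\alpha}{\Gamma(m-\alpha)}\int_0^\infty \partial_u^m T_{t+u}^{\mathcal{A}}(f)(x)\,u^{m-\alpha-1}\,du,
\]
so that the same local/global split applied to $u^m\partial_u^m T_u^{\mathcal{A}}$ suffices, with the extra $u^{m-\alpha-1}$ absorbed by elementary beta-function estimates.

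For the closures $(d_0),(e_0)\Rightarrow(a)$ when $\alpha=0$, the point is that $T_t^{\mathcal{A}}(x,y)$ agrees, up to bounded multiplicative constants, with the classical Gauss--Weierstrass kernel when $t$ is small and $x,y$ lie in a fixed compact set. This furnishes a pointwise lower bound $T_{*,0}^{\mathcal{A}}(|f|)(x,w)\gtrsim \mathcal{M}^X(f)(x,w)$ for $f$ supported in a fixed ball, so that the a.e.\ $X$-valued membership in $(d_0)$ or $(e_0)$ is transferred to $\mathcal{M}^X(f)$. A Nikishin--Stein extrapolation in the spirit of the Gaussian work \cite{HTV1} then upgrades this a.e.\ finiteness into an $L^{p_0}_X\to L^{p_0}_X$ bound (or weak-type bound on $L^1_X$) for the truncated $\mathcal{M}^X$; translation invariance and a further appeal to \cite[Theorem 1.7]{GCMT1} identify this with the HL property of $X$, closing the circle.

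The main obstacle will be the global part of the kernel decomposition. The super-exponential growth of $\gamma_{-1}$ means one has to harvest the Gaussian decay of $T_t^{\mathcal{A}}(x,y)$ very precisely to beat the weight $\gamma_{-1}(x)^{-1}$, and the polynomial-in-$t$ prefactors together with the extra polynomial weights in the exponent produced by $t^\alpha\partial_t^\alpha$ have to be absorbed uniformly in $t>0$, especially after the additional Weyl integration when $\alpha\in(0,1)$. Once these scalar kernel bounds are established, the vector-valued upgrade using the K\"othe structure and the Fatou property of $X$ is standard.
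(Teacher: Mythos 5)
Your proposal is correct in outline and reflects the essential structure of the argument (local/global split suited to the inverse Gaussian metric, Schur-type control on the global part, Weyl reduction of $\alpha\in(0,1)$ to $\alpha=1$), but it organizes the work differently from the paper, and there are a few places where the plan is optimistic.

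The paper's central device, which your proposal does not use, is a \emph{subtraction} between $t^\alpha\partial_t^\alpha T_t^{\mathcal{A}}$ and the classical kernel $t^\alpha\partial_t^\alpha W_t$: Lemmas 4.2 and 4.6 show that the kernel of the difference on the local region is dominated by $\frac{1+|x|}{|x-y|^{n-1}}\chi_N(x,y)$, which is an $L^1$-bounded Schur kernel; combined with the weak/strong type bounds for the global pieces (Lemmas 4.3, 4.4, 4.8, 4.9), this yields a two-sided transference principle (Propositions 4.1 and 4.5): $T^{\mathcal{A}}_{*,\alpha}$ is bounded in a given sense if and only if $W_{*,\alpha}$ is. The statement then reduces entirely to Theorem 1.4 about $W_{*,\alpha}$, $P_{*,\alpha}$. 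Your plan instead aims for a direct pointwise domination $T^{\mathcal{A}}_{*,\alpha,\mathrm{loc}}(f)\lesssim\mathcal{M}^X(f)$; that estimate is true, but the cleanest way to see it for $\alpha=1$ (where the kernel loses positivity, and the cross term $|y|\,|x-e^{-t}y|$ on the local region is delicate) is precisely via the subtraction with $W_{*,1,\mathrm{loc}}$, so you end up needing the paper's lemma anyway. Moreover, without the two-sided transference, your closure $(d_0),(e_0)\Rightarrow(a)$ requires a separate lower bound $T_{*,0}^{\mathcal{A}}(|f|)\gtrsim\mathcal{M}^X(f)$ on compact sets plus a Nikishin--Stein argument; the paper avoids this by running the subtraction in reverse (a.e. finiteness of $T_{*,0}^{\mathcal{A}}$ implies a.e. finiteness of $W_{*,0}$, then Theorem 1.4 applies), which is more uniform. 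Two small corrections to your sketch: the global maximal operator is only \emph{weak} $(1,1)$, not $L^1_X\to L^1_X$ (Lemma 4.3, via Salogni's estimates; strong $(1,1)$ is expected to fail, as in the Gaussian case), and your reduction of $\alpha\in(0,1)$ to the integer case should target $\alpha=1$ directly (not a general $m$), since the theorem only covers $\alpha\in[0,1]$ and the paper only establishes the needed transference for $\alpha=0,1$ (Propositions 4.1 and 4.5); indeed the beta-integral you invoke gives exactly $T^{\mathcal{A}}_{*,\alpha}(f)\lesssim T^{\mathcal{A}}_{*,1}(f)$ as in the paper's display (3*2).
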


\begin{Th}\label{Th:1.2}
Let $X$ be a K\"othe function space with the Fatou property, $1<p<\infty$ and $\alpha \in [0,\infty)$. Consider the following assertions.
\begin{itemize}
\item[$(a)$]
$X$ has the Hardy-Littlewood property.

\item[$(b_\alpha)$]
$P_{*,\alpha}^\mathcal{A}$ is bounded from
$L^p_X(\Rn,\gamma_{-1})$ into itself.

\item[$(c_\alpha)$]
$P_{*,\alpha}^\mathcal{A}$ is bounded from
$L^1_X(\Rn,\gamma_{-1})$ into
$L^{1,\infty}_X(\Rn,\gamma_{-1})$.

\item[$(d_\alpha)$]
For every $f \in L^p_X(\Rn,\gamma_{-1})$,
$P_{*,\alpha}^\mathcal{A}(f)(x) \in X$ for almost all $x \in \Rn$.

\item[$(e_\alpha)$]
For every $f \in L^1_X(\Rn,\gamma_{-1})$,
$P_{*,\alpha}^\mathcal{A}(f)(x) \in X$ for almost all $x \in \Rn$.
\end{itemize}
Then, when
\begin{itemize}
\item $\alpha=0$,
$(a) \Leftrightarrow (b_\alpha) \Leftrightarrow (c_\alpha) \Leftrightarrow (d_\alpha) \Leftrightarrow (e_\alpha)$;

\item $\alpha>0$,
$(a) \Rightarrow (b_\alpha), \: (c_\alpha), \:(d_\alpha) \text{ and } (e_\alpha)$.\\




\end{itemize}
\end{Th}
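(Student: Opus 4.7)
My plan is to deduce Theorem \ref{Th:1.2} from Theorem \ref{Th:1.1} and Theorem \ref{Th:1.3}, using the subordination formula \eqref{eq:subordination} to transfer Poisson-semigroup maximal results from their heat-semigroup counterparts. For every $\alpha \geq 0$, the forward direction $(a) \Rightarrow (b_\alpha), (c_\alpha), (d_\alpha), (e_\alpha)$ would follow at once from a pointwise comparison of the form $P_{*,\alpha}^{\mathcal{A}}(f)(x,w) \leq C_\alpha\, T_*^{\mathcal{A}}(f)(x,w)$ combined with Theorem \ref{Th:1.1} in the case $\alpha = 0$. The converse cycle, needed only at $\alpha = 0$, would close by relating $P_*^{\mathcal{A}}$ to the inverse Gaussian Hardy-Littlewood maximal $\mathcal{M}_{\gamma_{-1}}^X$ of Theorem \ref{Th:1.3}.

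To produce the pointwise comparison, let $m \in \NN$ with $m-1 \leq \alpha < m$, set $p_t(u) := \frac{t}{2\sqrt{\pi}\, u^{3/2}} e^{-t^2/(4u)}$, and use \eqref{eq:subordination} together with the definition of the Weyl derivative (Fubini being justified by the decay of $p_t$ and its derivatives) to express
\[ t^\alpha \partial_t^\alpha P_t^{\mathcal{A}}(f)(x,w) = \int_0^\infty K_\alpha(t,u)\, T_u^{\mathcal{A}}(f)(x,w)\, du, \qquad K_\alpha(t,u) := \frac{t^\alpha}{\Gamma(m-\alpha)} \int_0^\infty s^{m-\alpha-1}\, \partial_t^m p_{t+s}(u)\, ds. \]
The key estimate is the uniform bound $\int_0^\infty |K_\alpha(t,u)|\, du \leq C_\alpha$ in $t > 0$: an explicit computation shows $\partial_t^m p_{t+s}(u)$ is a finite sum of terms of the form $(t+s)^a u^{-b} e^{-(t+s)^2/(4u)}$, and the substitutions $u = (t+s)^2 v$ and $s = t\sigma$ reduce the integral to elementary beta/gamma type expressions. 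This yields $P_{*,\alpha}^{\mathcal{A}}(f) \leq C_\alpha T_*^{\mathcal{A}}(f)$ pointwise, and the forward implications for every $\alpha \geq 0$ follow from Theorem \ref{Th:1.1}.

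For the converse cycle at $\alpha = 0$, the plan is to establish the reverse-type control $\mathcal{M}_{\gamma_{-1}}^X(f)(x) \leq C\, P_*^{\mathcal{A}}(f)(x)$. Splitting the supremum at the admissibility radius $r_0(x) := \min\{1,\, 1/(1+|x|)\}$, the local part is handled by the classical Poisson-vs-Hardy-Littlewood comparison, since on such balls $\gamma_{-1}$ is comparable to Lebesgue measure and $P_t^{\mathcal{A}}(x,y)$ admits, via subordination of the explicit heat kernel, the standard lower bound $\gtrsim t/(t^2+|x-y|^2)^{(n+1)/2}$ for $t, |x-y|$ small. The global part is tamed by the Gaussian factor $e^{-|x-e^{-t}y|^2/(1-e^{-2t})}$ present in $T_t^{\mathcal{A}}(x,y)$, which after subordination compensates for the superexponential growth of $\gamma_{-1}(B(x,r))$. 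Combined with Theorem \ref{Th:1.3} this gives $(b_0), (c_0), (d_0), (e_0) \Rightarrow (a)$; the remaining cross-implications among the four Poisson conditions are routine via interpolation with the $L^p$-boundedness from \cite[Corollary 4.2]{LeXu} together with closed-graph arguments (as in the $\alpha = 0$ part of Theorem \ref{Th:1.1}).

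The hardest step will be the uniform $L^1$-bound on $K_\alpha(t,\cdot)$ for non-integer $\alpha$: the interplay between the Weyl weight $s^{m-\alpha-1}$ and the derivatives $\partial_t^m p_{t+s}(u)$ must be tracked with care so that no spurious positive power of $t$ survives in the final estimate, and the interchange of $\partial_t^\alpha$ with the subordination integral must be justified through dominated convergence. A secondary difficulty is the global half of the converse comparison at $\alpha = 0$, where the non-finiteness of $\gamma_{-1}$ makes it non-obvious a priori that $P_*^{\mathcal{A}}$ can dominate large-radius averages; the Gaussian decay embedded in $T_t^{\mathcal{A}}$, transferred through subordination, is what saves the argument.
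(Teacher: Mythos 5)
Your forward direction is sound and essentially mirrors the paper: for $\alpha=0$ the subordination density integrates to $1$, giving $P_{*,0}^{\mathcal A}(f)\le T_{*,0}^{\mathcal A}(f)$; for $\alpha>0$ the paper cites \cite[Lemma 3]{BCCFR} for precisely the bound $|\partial_t^\alpha(te^{-t^2/4u})|\lesssim e^{-t^2/8u}u^{(1-\alpha)/2}$, after which the $u=t^2v$ substitution gives the cancellation $t^\alpha\cdot t^{-\alpha}$ you want and $\int_0^\infty e^{-1/8v}v^{-(\alpha+2)/2}\,dv<\infty$. (Note this last integral diverges at $v=\infty$ when $\alpha=0$, so you really do need the separate positivity argument there, which you correctly have.) The paper actually does not phrase this as a single global domination $P_{*,\alpha}^{\mathcal A}\lesssim T_{*,0}^{\mathcal A}$; instead it compares $P_{*,\alpha,loc}^{\mathcal A}$ to the Euclidean $P_{*,\alpha,loc}$ and controls $P_{*,\alpha,glob}^{\mathcal A}$ by $T_{*,0,glob}^{\mathcal A}$, then invokes Theorem~\ref{Th:1.4}; but your cleaner domination also does the job for the forward implications.

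The converse at $\alpha=0$ is where the proposal has a real gap. You want the reverse domination $\mathcal{M}_{\gamma_{-1}}^X(f)\lesssim P_*^{\mathcal A}(f)$ and then Theorem~\ref{Th:1.3}. Two problems. First, the global half of this lower bound is asserted, not proved, and the supporting intuition is inverted: Gaussian decay of $T_t^{\mathcal A}(x,y)$ is exactly what makes \emph{upper} bounds on the Poisson maximal tractable in the global region, but here you need a \emph{lower} bound of $P_t^{\mathcal A}(f)(x)$ against an average over a large ball whose $\gamma_{-1}$-measure grows superexponentially. That requires a lower bound on the kernel over the whole ball for some choice of $t$, and the fast decay of $T_u^{\mathcal A}(x,\cdot)$ works against you; no estimate in the proposal establishes this, and the paper never claims such an inequality. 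Second, even granting the reverse domination, Theorem~\ref{Th:1.3} only covers $(b)$ (weak type $1$) and $(c)$ (pointwise-in-$X$ for $L^1$); it says nothing about boundedness on $L^p_X(\Rn,\gamma_{-1})$ nor about the pointwise-in-$X$ condition for $L^p$. So $(b_0)\Rightarrow(a)$ and $(d_0)\Rightarrow(a)$ are not reached, and the remark about ``interpolation and closed-graph arguments'' does not fill this in (one cannot interpolate from a strong type $p$ bound down to a weak type $1$ bound). The paper instead proves an equivalence between $P_{*,0}^{\mathcal A}$ and the Euclidean $P_{*,0}$ (via the local/global splitting and the $T^{\mathcal A}-W$ difference estimates of Section~4) and then applies the Euclidean Theorem~\ref{Th:1.4}, which \emph{does} cover $L^p$ and both pointwise-in-$X$ statements, and repeats the $(d_0),(e_0)\Rightarrow(a)$ argument from Section~\ref{subsect:Th1.2}.
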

Some comments related to Theorems \ref{Th:1.1} and \ref{Th:1.2} are in order. Suppose that $\{\mathbb{T}_t\}_{t>0}$ is a symmetric diffusion semigroup on $(\Lambda,\mu)$ and $\{\mathbb{P}_t\}_{t>0}$ is the corresponding subordinated Poisson semigroup. H. Li and P. Sj\"ogren (\cite[p. 473]{LiSj}) proved that, for every $k\in \mathbb{N}$, there exists $C>0$ such that
$$
\mathbb{P}_{*,k}(f)\le C\sup_{t>0}\Big|\frac{1}{t}\int_0^t\mathbb{T}_s(f)ds\Big|.
$$
Then by using Hopf-Dunford-Schwartz ergodic theorem it follows that $\mathbb{P}_{*,k}$ is bounded from $L^1(\Lambda)$ into $L^{1,\infty}(\Lambda)$, for every $k\in \mathbb{N}$.\\

Let $X$ be a K\"othe function space on $(\Omega,\nu)$. We define the 
\textit{maximal ergodic function} associated to $\{\mathbb{T}_t\}_{t>0}$ by
$$
\mathbb{M}(f)(x,w)
:=\sup_{t>0}\frac{1}{t}\Big|\int_0^t\mathbb{T}_s(f(\cdot,w))(x)ds\Big|,\,\,\,x\in \Lambda\,\,\, and\,\,\,w\in \Omega.
$$
A Banach space $Y$ is said to have the \textit{UMD property}
when the Hilbert transform can be extended to $L^p(\mathbb{R})\bigotimes Y$, in the obvious way, as a bounded operator from  $L^p(\mathbb{R})\bigotimes Y$ into itself with the topology of $L^p_Y(\mathbb{R})$ for some (equivalently, for every) $1<p<\infty$. The interested reader can find in \cite[Chapter 4]{Hyt} the main properties of UMD Banach spaces.\\

In \cite[Theorem 1]{Xu} Q. Xu proved that if $X$ is a UMD K\"othe function space, then $\mathbb{M}$ defines a bounded operator from $L^p_X(\Lambda)$ into itself, for every $1<p<\infty$. Hence, if $X$ is a UMD K\"othe function space, for every $k\in \mathbb{N}$ and $1<p<\infty$, $\mathbb{P}_{*,k}$ is bounded from $L^p_X(\Lambda)$ into itself. Furthermore, as Xu commented (\cite[Problem 10]{Xu}), it is not known if, even when $X$ is a UMD K\"othe function space, $\mathbb{M}$ is bounded from $L^1_X(\Lambda)$ into $L^{1,\infty}_X(\Lambda)$.

\quad\\
We prove our results by using a method introduced by Muckenhoupt
(\cite{Mu}) that consists on decomposing the operator under consideration in two  parts that are called \textit{local} and \textit{global} operators, respectively. In order to see the
$L^p(\Rn,\gamma_{-1})$--boundedness properties of the local operators we need the corresponding for the classical operators, that is, those operators associated with the Euclidean heat and Poisson semigroups. Next, we state these results that we will be needed, which are also interesting by themselves.\\

We consider the \textit{Euclidean heat semigroup} (i.e., the one associated to $-\frac{1}{2}\Delta$), $\{W_t\}_{t>0}$, given by
$$
W_t(f)(x)
:=
\int_{\Rn} W_t(x-y) f(y) \, dy,
\quad x \in \Rn, \quad t>0,$$
where
$$
W_t(z)
:=
\frac{1}{(2\pi t)^{n/2}} e^{-|z|^2/2t},
\quad z \in \Rn, \quad t>0,
$$
and the \textit{Euclidean Poisson semigroup}
$\{P_t\}_{t>0}$ defined by
$$
P_t(f)(x)
:=
\int_{\Rn} P_t(x-y) f(y) \, dy,
\quad x \in \Rn, \quad t>0,$$
with
$$
P_t(z)
:=
\frac{\Gamma((n+1)/2)}{\pi^{(n+1)/2}}
\frac{t}{(t^2+|z|^2)^{(n+1)/2}},
\quad z \in \Rn, \quad t>0.
$$

\quad\\
If $X$ is a K\"othe function space, $\alpha>0$, $1 \leq p < \infty$ and $f \in L^p_X(\Rn,{dx})$, we introduce the following maximal operators
$$
W_{*,\alpha}(f)(x,w)
:=
\sup_{t>0}
| t^\alpha \partial_t^\alpha W_t(f(\cdot,w))(x)|,
\quad x \in \Rn, \quad w \in \Omega,
$$
and
$$
P_{*,\alpha}(f)(x,w)
:=
\sup_{t>0}
| t^\alpha \partial_t^\alpha P_t(f(\cdot,w))(x)|,
\quad x \in \Rn, \quad w \in \Omega.
$$

\begin{Th}\label{Th:1.4}
Let $X$ be a K\"othe function space with the Fatou property, $1<p<\infty$ and $\alpha \in [0,\infty)$.
Denote by $S_{*,\alpha}$ the maximal operators
$W_{*,\alpha}$ or $P_{*,\alpha}$ and
consider the following assertions.
\begin{itemize}
\item[$(a)$]
$X$ has the Hardy-Littlewood property.

\item[$(b_\alpha)$]
$S_{*,\alpha}$ is bounded from
$L^p_X(\Rn,dx)$ into itself.

\item[$(c_\alpha)$]
$S_{*,\alpha}$ is bounded from
$L^1_X(\Rn,dx)$ into
$L^{1,\infty}_X(\Rn,dx)$.

\item[$(d_\alpha)$]
For every $f \in L^p_X(\Rn,dx)$,
$S_{*,\alpha}(f)(x) \in X$ for almost all $x \in \Rn$.

\item[$(e_\alpha)$]
For every $f \in L^1_X(\Rn,dx)$,
$S_{*,\alpha}(f)(x) \in X$ for almost all $x \in \Rn$.
\end{itemize}
Then, we have that
\begin{itemize}
\item[$(i)$]
$(a) \Leftrightarrow
(b_\alpha) \Leftrightarrow
(c_\alpha)  \Leftrightarrow
(d_\alpha)  \Leftrightarrow
(e_\alpha) $,
for $\alpha=0$.

\item[$(ii)$]
$(a) \Rightarrow
(b_\alpha), 
(c_\alpha), 
(d_\alpha) \text{ and }
(e_\alpha) $,
for every $\alpha \geq 0$.

\end{itemize}
\end{Th}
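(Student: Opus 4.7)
The idea is to sandwich $S_{*,\alpha}$ between scalar multiples of the centered Hardy-Littlewood maximal operator $\mathcal{M}^X$ defined in \eqref{MaxHL}. Specifically, for every $\alpha\ge 0$ and for both choices $S_{*,\alpha}=W_{*,\alpha}$ and $S_{*,\alpha}=P_{*,\alpha}$, I will prove the upper estimate
$$S_{*,\alpha}(f)(x,w)\le C\,\mathcal{M}^X(f)(x,w),\qquad x\in\Rn,\ w\in\Omega,$$
together with the reverse estimate, available only at $\alpha=0$,
$$\mathcal{M}^X(f)(x,w)\le C\,S_{*,0}(|f|)(x,w).$$
Once these pointwise bounds are in hand the theorem reduces to \cite[Theorem 1.7]{GCMT1}, which, under the Fatou property, identifies the Hardy-Littlewood property of $X$ with the $L^p_X$-boundedness of $\mathcal{M}^X$ for some (equivalently, every) $1<p<\infty$ and with its $L^1_X\to L^{1,\infty}_X$-boundedness.

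\medskip

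The core of Step~1 is the kernel inequality $|t^\alpha\partial_t^\alpha W_t(z)|\le\phi^W_t(z)$, where $\phi^W_t(z)=t^{-n/2}\phi^W(z/\sqrt{t})$ for some radially decreasing $\phi^W\in L^1(\Rn)$; analogously for the Poisson semigroup with dilations $t^{-n}\phi^P(z/t)$. For integer $\alpha=k$ the heat case is immediate from the identity $\partial_t^k W_t(z)=t^{-k}W_t(z)\,Q_k(z/\sqrt{t})$, with $Q_k$ a polynomial of degree $2k$ coming from iterating $\partial_t=\tfrac{1}{2}\Delta$; the Gaussian absorbs the polynomial, yielding $t^k|\partial_t^k W_t(z)|\le C_k W_{2t}(z)$. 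For non-integer $\alpha\in(m-1,m)$ I apply Weyl's formula and substitute $s=tu$ to obtain
$$t^\alpha\partial_t^\alpha W_t(z)=\frac{1}{\Gamma(m-\alpha)}\int_0^\infty u^{m-\alpha-1}\,t^m\partial_t^m W_{t(1+u)}(z)\,du,$$
and, after the change of variable $v=1/(1+u)$, the integer-order bound produces
$$|t^\alpha\partial_t^\alpha W_t(z)|\le C\,t^{-n/2}\min\bigl(1,(t/|z|^2)^{\alpha+n/2}\bigr),$$
which is of the required form and integrable in $z/\sqrt{t}$ thanks to $\alpha>0$. The Poisson case is handled identically starting from $P_t(z)=c_n\,t(t^2+|z|^2)^{-(n+1)/2}$, whose derivatives are rational in $t$ and $|z|$ with the appropriate scaling. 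Once the kernel bound holds, the classical approximation-to-the-identity inequality $\sup_{t>0}(\phi_t\ast|g|)(x)\le\|\phi\|_{L^1(\Rn)}\,\mathcal{M}(|g|)(x)$, applied with $g=f(\cdot,w)$, yields the desired upper bound. The reverse estimate at $\alpha=0$ in Step~2 is then immediate from $W_t(z)\ge c\,t^{-n/2}\chi_{B(0,\sqrt{t})}(z)$ and $P_t(z)\ge c\,t^{-n}\chi_{B(0,t)}(z)$.

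\medskip

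With both estimates in hand, (ii) is immediate: the Hardy-Littlewood property gives boundedness of $\mathcal{M}^X$ on $L^p_X(\Rn,dx)$ and from $L^1_X(\Rn,dx)$ into $L^{1,\infty}_X(\Rn,dx)$, which the upper bound transfers to $S_{*,\alpha}$, proving $(b_\alpha)$ and $(c_\alpha)$, and hence trivially $(d_\alpha)$ and $(e_\alpha)$. For (i), at $\alpha=0$ the chain $(a)\Rightarrow(b_0)\Rightarrow(d_0)$ and $(a)\Rightarrow(c_0)\Rightarrow(e_0)$ is obtained as in (ii). The converses $(b_0)\Rightarrow(a)$ and $(c_0)\Rightarrow(a)$ follow from the reverse bound applied to $|f|$ (using $\|\,|f|\,\|_{L^p_X}=\|f\|_{L^p_X}$) together with \cite[Theorem 1.7]{GCMT1}. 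The implications $(d_0)\Rightarrow(b_0)$ and $(e_0)\Rightarrow(c_0)$ are handled by a standard closed-graph/Nikishin-type argument: the sublinear, order-preserving map $f\mapsto S_{*,0}(f)$ from $L^p_X(\Rn,dx)$ into the space of measurable $X$-valued functions has closed graph, hence produces a weak-type a priori estimate, and the reverse bound then promotes boundedness of $S_{*,0}$ to that of $\mathcal{M}^X$.

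\medskip

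The principal technical obstacle is the fractional-order kernel estimate for non-integer $\alpha$: verifying that the Weyl integrand, after the dilation $s=tu$, is controlled uniformly in $t$ by a single radial-decreasing integrable profile. The remaining ingredients---the integer-order kernel identities, the lower bounds at $\alpha=0$, the approximation-to-the-identity reduction, and the closed-graph step for $(d_0)/(e_0)$---are either classical or routine reductions to vector-valued Hardy-Littlewood theory.
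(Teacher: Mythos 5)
Your argument is correct and structurally matches the paper's: both hinge on the pointwise comparison $S_{*,0}\approx\mathcal{M}^X$ at $\alpha=0$, the domination $S_{*,\alpha}\lesssim\mathcal{M}^X$ (equivalently $\lesssim S_{*,0}$) for $\alpha>0$, and the transfer of the K\"othe-space equivalences for $\mathcal{M}^X$ coming from \cite[Theorem 1.7]{GCMT1} and \cite{HTV1}. Where you genuinely diverge is in the fractional step. You aim at an explicit radial-decreasing integrable profile with $|t^\alpha\partial_t^\alpha W_t(z)|\lesssim t^{-n/2}\min\bigl(1,(t/|z|^2)^{\alpha+n/2}\bigr)$; this is true and integrable for $\alpha>0$, but the $u$-integral you get after the dilation $s=tu$ requires a case split at $u\sim|z|^2/t$ to be pushed through, so it is a bit more work than the proposal suggests, and you rightly flag it as the main technical point. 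The paper sidesteps this kernel-level profile altogether: for $\alpha\in(m-1,m)$ it applies Weyl's formula to $W_t(f)$ itself, uses $|\partial_\tau^m W_\tau(f)|\le\tau^{-m}W_{*,m}(f)$ with $\tau=t+s$, and observes that $t^\alpha\int_0^\infty s^{m-\alpha-1}(t+s)^{-m}\,ds$ is a finite Beta-type constant, giving the clean chain $W_{*,\alpha}(f)\lesssim W_{*,m}(f)\lesssim W_{*,0}(f)$ pointwise. Your lower bound at $\alpha=0$ uses the elementary $W_t(z)\gtrsim t^{-n/2}\chi_{B(0,\sqrt t)}(z)$; the paper instead goes through the subordinated Poisson kernel, $\mathcal{M}(f)\lesssim\sup_r P_r(f)\lesssim\sup_r W_r(f)$, which conveniently handles the Poisson case at the same time. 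Both routes deliver the same conclusion; yours buys a self-contained direct estimate, at the price of a more delicate kernel bound; the paper's buys brevity by staying at the level of $W_{*,k}(f)$.
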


Observe that in the statements of our results we have imposed $X$ to have the Fatou property. This is because at some point in the proofs  we compare the operators with the classical Hardy- Littlewood maximal function $\mathcal{M}^X$. In \cite{HTV1} the authors use a different definition of the  Hardy-Littlewood property for a K\"othe function space. They say that a K\"othe function space $X$ satisfies the Hardy-Littlewood property if, and only if, there exists $1<p_0<\infty$ such that $\mathcal{M}^X$ is bounded from $L^{p_0}(\Rn,X)$ into itself (and the equivalences that follow). Therefore, if we had used this definition in our results then  they would remain valid and the Fatou property would be needed only for the implications that concern $(d_\alpha)$ and $(e_\alpha)$ in Theorems \ref{Th:1.1} and  \ref{Th:1.2}.

\,

As mentioned before, our study is motivated by \cite{HTV1}, which was developed in the Gaussian context.
We remark that in \cite{HTV1} the authors considered only the maximal operator for the Poisson semigroup associated with the Ornstein-Uhlenbeck operator, which is in our notation the maximal operator $P^{\mathcal{L}}_{*,0}$. On the other hand, note that the heat and Poisson kernels are positive, but this property is lost after taking derivatives.\\

In the following sections we present the proofs of our theorems. Throughout the paper $C$ and $c$ always denote positive constants that might change in each appearance.
We also write $a\lesssim b$ as shorthand for $a\leq Cb$ and moreover will use the notation $a\approx b$ if $a\lesssim b$ and $b\lesssim a$.

\section{Proof of Theorem \ref{Th:1.3}}

Firstly we introduce some notation that will be useful throughout the paper.\\

For some $\beta>0$ we divide $\Rn \times \Rn$ in two regions, namely the
\textit{local region}
$$
N_\beta
:= \Big\{(x,y) \in \Rn \times \Rn \text{ : }
|x-y| \leq \beta 
\Big(1 \land \frac{1}{|x|}\Big)
\Big\} 
$$
and the \textit{global region}
$$
N_\beta^c
:= \Big\{(x,y) \in \Rn \times \Rn \text{ : }
|x-y| > \beta 
\Big(1 \land \frac{1}{|x|}\Big)
\Big\}.
$$
Here $a \land b := \min\{a,b\}$, $a, b \in \mathbb{R}$.
If $T$ represents any of the operators defined above, we introduce
$$
T_{loc(\beta)}(f)(x)
:= T( \chi_{N_\beta}(x,\cdot)f)(x),
\quad x \in \Rn,
$$
and
$$
T_{glob(\beta)}(f)(x)
:= T( \chi_{N_\beta^c}(x,\cdot)f)(x),
\quad x \in \Rn.
$$
The precise value of $\beta$ will be clear in each occurrence. When $\beta=1$ we do not write it. \\

In order to prove Theorem \ref{Th:1.3} we will adapt to our inverse Gaussian setting some ideas from \cite[Section 2]{HTV2}.\\

We start with the following useful lemma.

\begin{Lem}\label{Lem:Q}
There exists a measurable function $Q$ on $\Rn \times \Rn$ such that $Q(x,y)\geq 0$, $x,y \in \Rn$. Moreover,
\begin{itemize}
\item[$(i)$] the operator $\mathbb{Q}$ defined by
$$
\mathbb{Q}(g)(x)
:=
\int_{\Rn} Q(x,y) g(y) \, dy
$$
is bounded from $L^1(\Rn,\gamma_{-1})$ into itself;\\

\item[$(ii)$] for every $g \in L^1(\Rn,\gamma_{-1})$, $g \geq 0$,
$$\mathcal{M}^X_{\gamma_{-1},glob}(g)
\leq \mathbb{Q}(g).$$
\end{itemize}
\end{Lem}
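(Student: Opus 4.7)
The plan is to construct the kernel $Q$ explicitly as a natural majorant of the global Hardy--Littlewood maximal function, and then verify the $L^1(\Rn,\gamma_{-1})$-boundedness of the associated integral operator via a Fubini argument that reduces everything to a uniform pointwise estimate for $\int Q(x,y)\gamma_{-1}(x)\,dx$ as a function of $y$. Concretely, I would set
$$
Q(x,y) := \chi_{N_1^c}(x,y)\, \frac{\gamma_{-1}(y)}{\gamma_{-1}(B(x,|x-y|))},
\quad x,y\in \Rn,
$$
which is nonnegative and measurable.

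Property $(ii)$ follows by a standard pointwise-domination argument. For fixed $x\in\Rn$ and any $r>0$, the integrand defining $\mathcal{M}^X_{\gamma_{-1},\mathrm{glob}}(g)(x)$ is supported on $\{y: |x-y|>1\wedge 1/|x|\}\cap\{y:|x-y|<r\}$, and on that set $r>|x-y|$, so monotonicity of $r\mapsto \gamma_{-1}(B(x,r))$ gives $1/\gamma_{-1}(B(x,r))\leq 1/\gamma_{-1}(B(x,|x-y|))$. Since $g\geq 0$, one may pass the supremum inside the integral and drop the constraint $|x-y|<r$ from the domain, obtaining $\mathcal{M}^X_{\gamma_{-1},\mathrm{glob}}(g)(x)\leq \mathbb{Q}(g)(x)$.

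For $(i)$, Fubini's theorem reduces the $L^1(\gamma_{-1})$-boundedness of $\mathbb{Q}$ to the uniform pointwise bound
$$
J(y) := \int_{\{x:\, |x-y|> 1\wedge 1/|x|\}} \frac{\gamma_{-1}(x)}{\gamma_{-1}(B(x,|x-y|))}\,dx \leq C,
\quad \text{a.e. }y\in\Rn.
$$
To handle $J(y)$, I would split the domain of integration according to the sizes of $|x|$, $|y|$ and $|x-y|$, and apply in each piece appropriate lower bounds on $\gamma_{-1}(B(x,r))$ in the inverse Gaussian setting, of the type developed by Salogni and Bruno: schematically, $\gamma_{-1}(B(x,r))\gtrsim r^n \gamma_{-1}(x)$ in the local range $r\leq 1\wedge 1/|x|$, and a genuine exponential gain $\gamma_{-1}(B(x,r))/\gamma_{-1}(x)\gtrsim e^{c(|x|r\vee r^2)}$ beyond that threshold. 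In each piece these lower bounds turn the integrand into a function with polynomial or exponential decay in $|x-y|$, and a spherical-coordinate integration yields a finite bound independent of $y$.

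The main obstacle is the verification of the uniform bound $J(y)\leq C$. The super-exponential growth of $\gamma_{-1}$ in the numerator leaves very little margin; one must use the restriction to the global region together with sharp lower bounds on $\gamma_{-1}(B(x,|x-y|))$ in order to extract any decay. This is the inverse Gaussian counterpart of the corresponding step in \cite[Section 2]{HTV2}, but here the roles of numerator and denominator are effectively reversed, and \emph{lower}, rather than upper, ball-mass estimates drive the argument.
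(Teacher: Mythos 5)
Your choice of kernel $Q(x,y)=\chi_{N^c}(x,y)\,\gamma_{-1}(y)/\gamma_{-1}(B(x,|x-y|))$ and your proof of $(ii)$ are in substance the same as the paper's: the paper's $Q$ is a supremum that is immediately dominated by exactly your kernel, so nothing is lost. Your reformulation of $(i)$ as the Schur-type bound $J(y):=\int \chi_{N^c}(x,y)\,\gamma_{-1}(x)/\gamma_{-1}(B(x,|x-y|))\,dx\le C$ is also correct, and is a tidier packaging of the same Fubini computation the paper carries out after splitting the $x$-domain; the two arguments are equivalent. So the plan is the right one.

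The gap is in the central estimate, which is only sketched, and the pointwise lower bound you cite to drive it is not accurate as stated. You claim that past the threshold $r>1\wedge 1/|x|$ one has
$\gamma_{-1}(B(x,r))/\gamma_{-1}(x)\gtrsim e^{c(|x|r\vee r^2)}$,
but this overstates the gain. Take $|x|$ large and $r\approx 1/|x|$: then $|x|r\vee r^2\approx 1$, so your bound would give a ratio $\gtrsim 1$, whereas by continuity with the local regime the true ratio is $\approx r^n\approx |x|^{-n}$. What the paper actually proves, for $|x|\ge 1$ and $R=|x-y|>1/|x|$, is
\begin{equation*}
\gamma_{-1}(B(x,R))\ \gtrsim\ \frac{e^{|x|^2}\,e^{R^2/4}\,R^{n-2}}{|x|^2}\,e^{cR|x|},
\end{equation*}
so that $\gamma_{-1}(x)/\gamma_{-1}(B(x,R))\lesssim |x|^n\,e^{-cR^2}\,e^{-cR|x|}$ (absorbing $R^{2-n}\le |x|^{n-2}$). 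The polynomial loss $|x|^n$ is genuinely present and must be absorbed by the exponential in the $x$-integration; this does work, because on the integration region $R|x|\gtrsim 1$ and one can change variables $u=x-y$ and treat the regimes $|u|\lesssim |y|$ and $|u|\gtrsim|y|$ separately, but it is exactly the part of the argument your sketch does not carry out. You should also note that the lower ball-mass bound itself is not a citation to Salogni or Bruno; the paper establishes it from scratch by a polar-coordinates computation (separately for $n\ge 2$ and $n=1$), and your proof needs that computation or an equivalent one.
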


\begin{Rem}\label{Rem:Mglob}
Lemma \ref{Lem:Q} implies that $\mathcal{M}_{\gamma_{-1},glob}^X$ is bounded from $L^1_X(\Rn,\gamma_{-1})$ into itself. This property does not depend on the Hardy-Littlewood property for the K\"othe function space $X$.
\end{Rem}

\begin{proof}[Proof of Lemma \ref{Lem:Q}]
We define
$$
Q(x,y)
:=
\sup_{r>0}
\frac{1}{\gamma_{-1}(B(x,r))}
\chi_{N^c}(x,y) \, \chi_{B(x,r)}(y) \, e^{|y|^2},
\quad x,y \in \Rn.$$
It is clear that
$$
Q(x,y)
\leq
\frac{1}{\gamma_{-1}(B(x,|x-y|))}
\chi_{N^c}(x,y) \, e^{|y|^2},
\quad x,y \in \Rn,
$$
and that
$$
\mathcal{M}^X_{\gamma_{-1},glob}(g)
\leq
\int_{\Rn} Q(x,y) g(y) \, dy,
\quad x \in \Rn,
$$
for every Lebesgue measurable function $g \geq 0$ in $\Rn$.\\

It remains to justify property $(i)$.
Suppose firstly that $n \geq 2$.
Let $x \in \Rn \setminus \{0\}$ and $R>0$.
Since $\gamma_{-1}$ is rotation invariant we can assume that
$x=|x|e_n$. We have that
\begin{align*}
\gamma_{-1}(B(x,R))
& = \int_{|u|<R} e^{|u+x|^2} \, du 
 \geq e^{|x|^2+R^2/4}
\int_{\varphi_0}^{\pi/2} \int_{R/2}^R r^{n-1} e^{2r|x|\cos \varphi_1}
\sin \varphi_1 \, dr \, d\varphi_1 \\
& = C
\frac{e^{|x|^2+R^2/4} }{|x|}
\int_{R/2}^R r^{n-2} \int_0^{2r|x|\cos \varphi_0}
 e^u \, du \, dr,
\end{align*}
where $\varphi_0:=\pi/3$.
Take
$R|x|\cos \varphi_0 \geq \beta$, for some $\beta>0$. Since
\begin{align*}
\int_{R/2}^R \int_0^{2r|x|\cos \varphi_0}
 e^u \, du \, dr
&\geq \int_{R/2}^R\int_{r|x|cos\varphi_0}^{2r|x|cos\varphi}e^ududr
\geq \int_{R/2}^Re^{r|x|cos\varphi_0}r|x|cos\varphi_0dr\\
&\geq\frac{R\beta}{4}
e^{(R|x|cos\varphi_0)/2} 
\geq\frac{\beta^2}{4|x|cos\varphi_0}
e^{(R|x|\cos\varphi_0)/2},
\end{align*}
we get
$$
\gamma_{-1}(B(x,R))
\gtrsim \frac{e^{|x|^2}e^{R^2/4}R^{n-2}}{|x|^2}
e^{(R|x| \cos \varphi_0)/2}.
$$

\quad\\
Next, we distinguish several cases.\\
\begin{itemize}
\item[$(a)$] Assume that $|x|\geq 1$, or equivalently, 
$ 1 \land 1/|x| =1/|x|$. 
Then,
$$|x-y| \, |x| \cos \varphi_0
\geq \cos \varphi_0,
\quad (x,y) \in N^c.$$
It follows that
\begin{align*}
Q(x,y)
& \lesssim \,
e^{|y|^2-|x|^2} \,
e^{-|x-y|^2/4} \,
e^{- (|x-y| \, |x| \cos \varphi_0)/2} \,
|x|^2 \,
|x-y|^{2-n} \\
& \lesssim  \,
e^{|y|^2-|x|^2} \,
e^{-|x-y|^2/4} \,
e^{- (|x-y| \, |x| \cos \varphi_0)/2} \,
|x|^n.
\end{align*}

\quad 

\begin{itemize}
\item[$(a.1)$]
We consider $|x-y|\leq |x|/2$.
Then, $|x|/2 \leq |y| \leq 3|x|/2$.
We get,
$$
Q(x,y)
\lesssim  \,
e^{|y|^2-|x|^2} \,
e^{-( |x-y| \, |y| \cos \varphi_0)/3} \,
|y|^n.
$$

\quad 

\item[$(a.2)$]
Suppose $|x-y|>|x|/2$.
Since $|x|\geq 1$, $|x-y|>1/2$ and it follows that
$$
\gamma_{-1}(B(x,|x-y|))
\geq
\gamma_{-1}(B(x,1/2))
\gtrsim
e^{|x|^2 + (|x| \cos \varphi_0)/4}
|x|^{-2}.
$$
Then,
$$
Q(x,y)
\lesssim  e^{|y|^2-|x|^2 -(|x| \cos \varphi_0)/4}
|x|^{2}.
$$
\end{itemize}

\quad\\
\item[$(b)$] Assume that $|x| \leq 1$, that is, $1 \land 1/|x| =1$. Then,
$$
B(x,1)
\subset
B(x,|x-y|),
\quad (x,y) \in N^c,$$
and for certain $C>0$,
$$
\gamma_{-1}(B(x,|x-y|))
\geq
\gamma_{-1}(B(x,1))
=
\int_{B(x,1)} e^{|y|^2} \, dy
\geq C,
\quad (x,y) \in N^c,
$$
where in the last inequality we have used that $e^{|y|^2}\ge 1$, $y\in \Rn$, so
$\gamma_{-1}(B(x,1))\ge |B(x,1)|=|B(0,1)|$, $x\in \Rn$.
Thus, we have in this case that
$$
Q(x,y)
\lesssim
e^{|y|^2}.$$
\end{itemize}

\quad\\
Let now $g \in L^1(\Rn,\gamma_{-1})$.
By taking into account the above estimates we obtain
\begin{align*}
\|\mathbb{Q}(g)\|_{L^1(\Rn,\gamma_{-1})}
& \lesssim
\int_{|x|\geq 1}
e^{|x|^2}
\int_{|x-y|\leq |x|/2}
e^{|y|^2-|x|^2} \,
e^{-(|x-y| \, |y| \cos \varphi_0)/3} \,
|g(y)| \,
|y|^n \,
dy \, dx \\
& \quad
+
\int_{|x|\geq 1}
e^{|x|^2}
\int_{|x-y|> |x|/2}
e^{|y|^2-|x|^2 - (|x| \cos \varphi_0)/4} \,
|g(y)| \,
|x|^2 \,
dy \, dx \\
& \quad
+
\int_{|x|\leq 1}
e^{|x|^2}
\int_{\Rn}
e^{|y|^2} \,
|g(y)| \,
 \,
dy \, dx
 \\
& \lesssim
\|g\|_{L^1(\Rn,\gamma_{-1})}.
\end{align*}

\quad\\
Finally, suppose that $n=1$.
Let $x \in \mathbb{R}\setminus \{0\}$ and $R>0$.
We can assume that $x>0$.
We have that
\begin{align*}
\gamma_{-1}(B(x,R))
& =
\int_{|u|<R}
e^{x^2+u^2+2xu} \, du
 \geq
e^{x^2 + R^2/4}
\int_{R/2<u<R}
e^{2xu} \, du\\
&\gtrsim
\frac{e^{x^2 + R^2/4+Rx} }{x},
\end{align*}
provided that $Rx \geq \beta$, for certain $\beta>0$. Then,
$$
\gamma_{-1}(B(x,|x-y|))
\gtrsim
\frac{e^{x^2 + |x-y|^2/4+|x-y| \, |x|} }{|x|},
\quad (x,y) \in N^c.
$$
By proceeding as above we can see that the operator $\mathbb{Q}$ is bounded from
$L^1(\mathbb{R},\gamma_{-1})$ into itself.
\end{proof}

We now study the operator $\mathcal{M}^X_{\gamma_{-1},loc}$.
Let $f \in L^1_X(\Rn,\gamma_{-1})$.
We have that
$$
\mathcal{M}^X_{\gamma_{-1},loc}(f)(x,w)
=
\sup_{0<r<1 \land 1/|x| } \frac{1}{\gamma_{-1}(B(x,r))}
\int_{B(x,r)} |f(y,w)| \, \gamma_{-1}(y) dy.
$$
As it was shown in \cite[p. 348]{HTV2},
there exists $C>0$ such that
\begin{equation}\label{expy-x}\frac{1}{C} e^{|y|^2}
\leq
e^{|x|^2}
\leq
C e^{|y|^2},
\quad |x-y| \leq 
1 \land \frac{1}{|x|}.
\end{equation}
This property allows us to show that
\begin{equation}\label{eq:2.1}
\mathcal{M}^X_{\gamma_{-1},loc}(f)
\lesssim
\mathcal{M}^X(f),
\end{equation}
where $\mathcal{M}^X$ denotes the Hardy-Littlewood maximal function with respect to the Lebesgue measure (see \eqref{MaxHL} above), and
\begin{equation}\label{eq:2.2}
\mathcal{M}^X_{loc}(f)
\lesssim
\mathcal{M}^X_{\gamma_{-1},loc}(f).
\end{equation}

\quad

\begin{proof}[Proof of Theorem \ref{Th:1.3}]
$(a) \Rightarrow (b)$
Since $\mathcal{M}^X_{\gamma_{-1},glob}$
is bounded from $L^1_X(\Rn,\gamma_{-1})$ into itself (Remark \ref{Rem:Mglob}), $(b)$ follows {from $(a)$} by using \eqref{eq:2.1} and \cite[Theorem 1.7]{GCMT1}.
\end{proof}

\begin{proof}[Proof of Theorem \ref{Th:1.3}]
$(b) \Rightarrow (a)$
Assume that $\mathcal{M}^X_{\gamma_{-1}}$
is bounded from $L^1_X(\Rn,\gamma_{-1})$ into
$L^{1,\infty}_X(\Rn,\gamma_{-1})$.
Since $\mathcal{M}^X_{\gamma_{-1},glob}$ is bounded from $L^1_X(\Rn,\gamma_{-1})$ into itself (Remark \ref{Rem:Mglob}), $\mathcal{M}^X_{\gamma_{-1},loc}$ is bounded from $L^1_X(\Rn,\gamma_{-1})$ into
$L^{1,\infty}_X(\Rn,\gamma_{-1})$.
Now, by using \eqref{eq:2.2} we deduce that $\mathcal{M}^X_{loc}$ is bounded from
$L^1_X(\Rn,\gamma_{-1})$ into
$L^{1,\infty}_X(\Rn,\gamma_{-1})$.
According to \cite[Proposition 3.2.5]{Sa},
$\mathcal{M}^X_{loc}$ is also bounded from
$L^1_X(\Rn,dx)$ into
$L^{1,\infty}_X(\Rn,dx)$.
Furthermore, $\mathcal{M}^X$ is invariant under dilations, so proceeding as in the proof of
{\cite[$(ii) \Rightarrow (i)$, Theorem 1.10]{HTV1}} we deduce that 
$\mathcal{M}^X$ is bounded from $L^1_X(\Rn,dx)$ into
$L^{1,\infty}_X(\Rn,dx)$, or equivalently, $X$ has the Hardy-Littlewood property.
\end{proof}

\begin{proof}[Proof of Theorem \ref{Th:1.3}]
$(b) \Rightarrow (c)$
This implication is clear.
\end{proof}

\begin{proof}[Proof of Theorem \ref{Th:1.3}]
$(c) \Rightarrow (a)$
Let $f \in L^1_X(\Rn,\gamma_{-1})$ and
suppose that
$$\mathcal{M}^X_{\gamma_{-1}}(f)(x,\cdot) \in X,
\quad
\text{for almost all } 
x \in \Rn.$$
Since $\mathcal{M}^X_{\gamma_{-1},glob}$ is bounded from $L^1_X(\Rn,\gamma_{-1})$ into
itself (Remark \ref{Rem:Mglob}), we also have that
$$\mathcal{M}^X_{\gamma_{-1},glob}(f)(x,\cdot) \in X,
\quad 
\text{for almost all } x \in \Rn.$$
Hence,
$$
\mathcal{M}^X_{\gamma_{-1},loc}(f)(x,\cdot) \in X,
\quad 
\text{for almost all }
x \in \Rn$$
By \eqref{eq:2.2}, we deduce
$$
\mathcal{M}^X_{loc}(f)(x,\cdot) \in X,
\quad
\text{for almost all }
x \in \Rn.$$ 

Assume now that $f \in L^1_X(\Rn,dx)$.
Let $k \in \mathbb{N}$. We have that
$|y|\leq 1+k$, provided that $|x|\leq k$
and
$|x-y| \leq  1 \land 1/|x| $.
Then,
$$
\mathcal{M}^X_{loc}(f)(x,w)
=
\mathcal{M}^X_{loc}(f \chi_{B(0,1+k)})(x,w),
\quad w \in \Omega,
\quad |x|\leq k.
$$
Since $f \chi_{B(0,1+k)}\in L^1_X(\Rn,\gamma_{-1})$, it follows that
$$\mathcal{M}^X_{loc}(f)(x,\cdot) \in X,
\quad 
\text{for almost all } 
x \in B(0,k).$$
We conclude that
\begin{equation}\label{eq:M1}
\mathcal{M}^X_{loc}(f)(x,\cdot) \in X,
\quad 
\text{for almost all } x \in \Rn.
\end{equation}

\quad\\
On the other hand, we have that
\begin{align*}
\mathcal{M}^X_{glob}(f)(x,\cdot)
& =
\sup_{r>0}
\frac{1}{|B(x,r)|}
\int_{B(x,r)} |f(y,\cdot)|
\chi_{N^c}(x,y) \, dy \\
& =
\sup_{r>1 \land 1/|x| }
\frac{1}{|B(x,r)|}
\int_{B(x,r)} |f(y,\cdot)|
\chi_{N^c}(x,y) \, dy \\
& \lesssim
\frac{1}{(1 \land 1/|x|)^n}
\int_{\Rn} |f(y,\cdot)| \, dy,
\quad x \in \Rn.
\end{align*}
Here the integrals are understood in the $X$-B\"ochner sense.\\

Since $f \in L^1_X(\Rn,dx)$,
$$
\int_{\Rn} |f(y,\cdot)| \, dy \in X
$$
and then
\begin{equation}\label{eq:M2}
\mathcal{M}^X_{glob}(f)(x,\cdot) \in X,
\quad 
x \in \Rn.
\end{equation}

\quad \\
{Thus, by combining \eqref{eq:M1}
and \eqref{eq:M2} and using \cite[Proposition 4.12]{HTV1} we obtain $(a)$.}
\end{proof}

\section{Proof of Theorem \ref{Th:1.4}}


We prove the results for the heat semigroup $\{W_t\}_{t>0}$. For the Poisson semigroup $\{P_t\}_{t>0}$ one can proceed similarly.\\

Observe that, for every $1\le p<\infty$ and $0 \leq f\in L^p(\Rn,dx)$, one has
\begin{equation}\label{eq:equivalence}
W_{*,0}(f)
\approx
\mathcal{M}(f).
\end{equation}
Indeed, since 
$$
\frac{1}{r^n}
=\frac{r}{
\Big( \frac{r^2}{2}+\frac{r^2}{2}\Big)^{(n+1)/2}}
\lesssim \frac{r}{\left( \frac{r^2}{2}+\frac{|x-y|^2}{2}\right)^{(n+1)/2}}, \quad y\in B(x,r),
$$
then, for $1\le p<\infty$ and $0 \leq f\in L^p(\Rn,dx)$, we can write
\begin{align*}
\sup_{r>0}\frac{1}{|B(x,r)|}\int_{B(x,r)}f(y)dy 
&\lesssim 
\sup_{r>0} \int_{B(x,r)}\frac{r}{\left( \frac{r^2}{2}+\frac{|x-y|^2}{2}\right)^{(n+1)/2}}f(y)dy\\  
&\lesssim
\sup_{r>0}P_r(f)(x)
\lesssim
\sup_{r>0}W_r(f)(x), \quad x\in\Rn,
\end{align*}
where in the last inequality we have used the subordination formula (see \eqref{eq:subordination}). The converse inequality in
\eqref{eq:equivalence} follows from
\cite[Proposition 2.7]{Duo}. \\

Therefore, property $(i)$ of Theorem \ref{Th:1.4} holds (see \cite[p. 25]{HTV1}).\\

An inductive procedure allows us to see that, for every $k\in \mathbb{N}$, there exist $a_0, a_1,...,a_k\in \mathbb{R}$ such that
$$
t^{k} \partial_t^{k} W_t(z)
=
W_t(z)
\sum_{j=0}^k a_j
\Big(
\frac{|z|^2}{t}
\Big)^j,
\quad z \in \Rn, \quad t>0.
$$

Let $k\in \mathbb{N}$ and 
{$f\in L^p(\Rn,dx)$}, $1\le p<\infty$. It follows that there exist $c>0$ such that
$$
|t^{k} \partial_t^{k} W_t(z)|
\lesssim
W_{ct}(z), \,\,\,z\in \Rn,\,\,\,t>0.
$$
Then,
\begin{equation}\label{3*1}
W_{*,k}(f)
\lesssim 
W_{*,0}(f).
\end{equation}

Suppose now that $\alpha\in (k-1,k)$. We can write
$$
\partial_t^\alpha W_t(f)=\frac{1}{\Gamma(k-\alpha)}\int_0^\infty \partial_t^kW_{t+s}(f)s^{k-\alpha-1}ds,\,\,\,t>0,
$$
which implies
\begin{align}\label{3*2}
W_{*,\alpha}(f)
&\lesssim 
W_{*,k}(f) \, 
\Big( t^\alpha \, \int_0^\infty\frac{s^{k-\alpha-1}}{(s+t)^k}ds
\Big)
\lesssim
W_{*,k}(f).
\end{align}

From  (\ref{3*1}), (\ref{3*2}) and $(i)$ we deduce  $(ii)$.
\qed

\section{Proof of Theorem \ref{Th:1.1}}

The proof of Theorem \ref{Th:1.1} is presented 
in Section \ref{subsect:Th1.2}. First
we need to investigate the 
$L^p_X(\Rn, \gamma_{-1})$-boundedness 
properties for the maximal operator 
$T_{*,0}^\CA$ (Section \ref{subsect:T0})
and 
$T_{*,1}^\CA$ (Section \ref{subsect:T1}).

\subsection{$L^p$-boundedness 
properties for $T_{*,0}^\CA$}\label{subsect:T0}


The goal of this section is to establish the following result.

\begin{Prop}\label{Prop4.1}
Let $X$ be a  K\"othe function space.
\begin{itemize}
    \item[$(a)$] For $1<p<\infty$, $T_{*,0}^\CA$ is bounded from $L^p_X(\Rn, \gamma_{-1})$ into itself if, and only if, $W_{*,0}$ is bounded from $L^p_X(\Rn, dx)$ into itself.
    \item[$(b)$] $T_{*,0}^\CA$ is bounded from $L^1_X(\Rn, \gamma_{-1})$ into $L^{1,\infty}_X(\Rn, \gamma_{-1})$  if, and only if, $W_{*,0}$ is bounded from $L^1_X(\Rn, dx)$ into {$L^{1,\infty}_X(\Rn, dx)$}.
\end{itemize}
\end{Prop}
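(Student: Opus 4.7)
The plan is to follow the Muckenhoupt local--global decomposition, splitting
$$T_{*,0}^{\mathcal{A}} \leq T_{*,0,loc(\beta)}^{\mathcal{A}} + T_{*,0,glob(\beta)}^{\mathcal{A}}$$
for a suitable $\beta>0$, and treating the two pieces separately. The heart of the argument is to prove that the global part is always bounded from $L^1_X(\Rn,\gamma_{-1})$ into itself (independently of any geometric property of $X$), while the local part is pointwise comparable to the Euclidean maximal operator $W_{*,0}$. Once both facts are in place, $(a)$ and $(b)$ follow by arguing on each region.

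For the global part, I would construct a positive kernel $K(x,y)$ such that
$$T_{*,0,glob(\beta)}^{\mathcal{A}}(f)(x) \lesssim \int_{\Rn} K(x,y)\,|f(y)|\,dy, \quad x\in\Rn,$$
and that the associated integral operator is bounded on $L^1(\Rn,\gamma_{-1})$. To obtain $K$, I would estimate the scalar kernel
$$T_t^{\mathcal{A}}(x,y)=\frac{e^{-nt}}{\pi^{n/2}(1-e^{-2t})^{n/2}}\exp\!\Big(\tfrac{-|x-e^{-t}y|^2}{1-e^{-2t}}\Big),$$
distinguishing the short-time ($t$ of order $|x-y|(1\wedge 1/|x|)$) and long-time regimes, and take the supremum in $t$ over each regime. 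The arithmetic is then very close to what was already carried out in Lemma~2.2 for $\mathcal{M}^X_{\gamma_{-1},glob}$, because the key factor $e^{|y|^2}/\gamma_{-1}(B(x,|x-y|))$ ultimately controls things via the growth estimates on $\gamma_{-1}(B(x,R))$ that distinguish $|x|\geq 1$ and $|x|\leq 1$. Since the bound is purely kernel-based, it extends tensorially to the $X$-valued setting by the scalar Fubini argument, giving the global $L^1_X(\Rn,\gamma_{-1})$ bound (hence also $L^p_X$ by trivial inclusion). This step is where most of the technical work sits and is the main obstacle.

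For the local part, I would use the crucial equivalence \eqref{expy-x}, which says $e^{|x|^2}\approx e^{|y|^2}$ on $N_\beta$. This lets one compare $\gamma_{-1}(y)\,dy$ with $e^{|x|^2}\,dy$ on $N_\beta$ (absorbing the constant factor into the supremum), and also compare $T_t^{\mathcal{A}}(x,y)$, restricted to $N_\beta$, with $W_{s(t)}(x-y)$ for an appropriate rescaling $s(t)\asymp t\wedge 1$. The outcome should be the pointwise estimate
$$T_{*,0,loc(\beta)}^{\mathcal{A}}(f)(x,w)\lesssim W_{*,0}(|f|\chi_{N_\beta}(x,\cdot))(x,w),$$
and similarly the reverse comparison
$$W_{*,0,loc(\beta)}(f)(x,w)\lesssim T_{*,0}^{\mathcal{A}}(|f|)(x,w),$$
analogous to \eqref{eq:2.1}--\eqref{eq:2.2}.

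With these two ingredients, $(a)$ and $(b)$ of the proposition follow. For the forward direction ($W_{*,0}$ bounded $\Rightarrow$ $T_{*,0}^{\mathcal{A}}$ bounded), the local estimate directly transfers boundedness on $L^p_X(\Rn,dx)$ to boundedness on $L^p_X(\Rn,\gamma_{-1})$ on $N_\beta$ (using \eqref{expy-x}), and the global estimate handles the rest. For the converse, boundedness of $T_{*,0}^{\mathcal{A}}$ forces boundedness of $W_{*,0,loc(\beta)}$ on $L^p_X(\Rn,\gamma_{-1})$; using once more that $\gamma_{-1}$ and Lebesgue measure are comparable on local balls, this yields $W_{*,0,loc(\beta)}$ bounded on $L^p_X(\Rn,dx)$, and then by dilation invariance of the Euclidean maximal function, exactly as in the proof of $(b)\Rightarrow(a)$ in Theorem~\ref{Th:1.3} (via \cite[Proposition 3.2.5]{Sa} and the argument from \cite[Theorem 1.10]{HTV1}), we recover the full $W_{*,0}$. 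The weak-type case $(b)$ is handled by the same scheme with $L^{1,\infty}_X$ replacing $L^p_X$ throughout, since both the global dominating kernel and the local pointwise comparisons preserve weak-type endpoints.
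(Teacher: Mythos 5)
Your overall scheme (local--global decomposition, comparison with the Euclidean maximal operator on the local region, then dilation invariance to recover $W_{*,0}$) is the same as the paper's, but two of the key claims you rely on do not hold in the form stated, and this creates a genuine gap.

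The first and more serious problem is the claim that $T^{\CA}_{*,0,glob}$ is bounded from $L^1_X(\Rn,\gamma_{-1})$ into itself, in analogy with what Lemma~\ref{Lem:Q} gives for $\mathcal{M}^X_{\gamma_{-1},glob}$. This strong type $(1,1)$ bound fails for the heat-semigroup global maximal operator. The sharp kernel estimate available here is (see \eqref{eq: 4.01})
$$
\sup_{t>0}T_t^{\CA}(x,y)\,\chi_{N^c}(x,y)\lesssim e^{|y|^2-|x|^2}\Big[(1+|x|)^n\wedge(|x|\sin\theta(x,y))^{-n}\Big],
$$
and a direct computation in spherical coordinates shows that $\sup_y\int_{\Rn}[(1+|x|)^n\wedge(|x|\sin\theta)^{-n}]\,dx=\infty$, so the positive kernel operator you would build is \emph{not} of strong type $(1,1)$ with respect to $\gamma_{-1}$. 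The paper's Lemma~\ref{lem2} establishes only the weak type $(1,1)$ of $T^{\CA}_{*,0,glob}$ via \cite[Lemma 3.3.4]{Sa}, and this is what the argument actually uses. In consequence, the parenthetical remark ``hence also $L^p_X$ by trivial inclusion'' does not work: the strong $L^p$ bound for $1<p<\infty$ requires a separate argument (the paper's Lemma~\ref{lem3}), which applies Schur's test to the kernel $e^{(|x|^2-|y|^2)/p}\sup_{t>0}|T_t^{\CA}(x,y)|\chi_{N^c}(x,y)$ after invoking \cite[Proposition 2.1]{MPS}. So your proof as written is missing a whole lemma's worth of work for part $(a)$.

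The second issue concerns the local region. You propose direct pointwise dominations $T^{\CA}_{*,0,loc(\beta)}(f)\lesssim W_{*,0}(|f|\chi_{N_\beta}(x,\cdot))$ and $W_{*,0,loc(\beta)}(f)\lesssim T^{\CA}_{*,0}(|f|)$, obtained by matching $T_t^{\CA}(x,y)$ with $W_{s(t)}(x-y)$ for a suitable rescaling. Establishing both-directions pointwise comparisons of the kernels uniformly in $t$ is not a mere rescaling exercise, because the exponent $|x-e^{-t}y|^2/(1-e^{-2t})$ differs from $|x-y|^2/(2t)$ by a cross term $2(1-e^{-t})\langle x-y,y\rangle/(1-e^{-2t})$ that can be of either sign and of size $O(1)$ on $N_\beta$. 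What the paper actually proves (Lemma~\ref{lem1}) is the cleaner statement that the \emph{difference} $T^{\CA}_{*,0,loc}-W_{*,0,loc}$ is a bounded kernel operator, with the uniform estimate $\sup_{t>0}|T_t^{\CA}(x,y)-W_t(x-y)|\lesssim(1+|x|)/|x-y|^{n-1}$ on $N$, handled term by term (the $H_1,\dots,H_4$ decomposition) and finished with Schur's test. This difference estimate serves simultaneously for both directions of the equivalence and for every $1\le p<\infty$, which is exactly what the proof of Proposition~\ref{Prop4.1} needs, whereas your pointwise bounds would have to be re-justified in each direction. I would recommend switching to the difference-of-kernels estimate and, for the global part, downgrading your claim to the correct weak type $(1,1)$ together with the separate Schur-test argument for $1<p<\infty$.
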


The proof of this proposition is divided into the Lemmas 
\ref{lem1}, \ref{lem2} and \ref{lem3}  below.\\

Consider the local and global maximal operators defined  by
$$
T_{*,0,loc}^{\CA} (f)(x,\omega)
:=\sup_{t>0}
\Big|T_t^{\CA} \Big(
f(\cdot, \omega)\chi_N(\cdot,x)\Big)(x)
\Big|, 
\quad x \in \Rn \text{ and } \omega \in \Omega
$$
and
$$
T_{*,0,glob}^{\CA} (f)(x,\omega)
:=\sup_{t>0}
\Big|T_t^{\CA} 
\Big(f(\cdot, \omega)\chi_{N^c}(\cdot,x)\Big)
(x)\Big|,
\quad x\in\Rn \text{ and } \omega\in\Omega.
$$
In a similar way we introduce the operators $W_{*,0,loc}$ and $W_{*,0,glob}$.

\begin{Lem}\label{lem1}
For every $1\le p<\infty$,
$T_{*,0,loc}^{\CA}-W_{*,0,loc}$ is bounded from
$L^p_X(\Rn, dx)$ into itself and from $L^p_X(\Rn, \gamma_{-1})$ into itself.
\end{Lem}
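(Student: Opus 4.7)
The plan is to reduce Lemma~\ref{lem1} to a pointwise kernel estimate to which a Schur-type argument applies. After the standard bound
$$\bigl|T_{*,0,loc}^{\CA}(f)(x,\omega) - W_{*,0,loc}(f)(x,\omega)\bigr| \le \int_{\Rn} \chi_N(x,y)\sup_{t>0}\bigl|T_t^{\CA}(x,y) - W_t(x-y)\bigr|\,|f(y,\omega)|\,dy,$$
it suffices to exhibit a scalar kernel $K(x,y)$, supported in $N$, that dominates the supremum and satisfies Schur's conditions $\sup_x\int K(x,y)\,dy+\sup_y\int K(x,y)\,dx<\infty$. Schur's test then gives boundedness on $L^p(\Rn,dx)$ for all $1\le p\le \infty$, and passing the $X$-norm through the integral by Minkowski's inequality lifts this to $L^p_X(\Rn,dx)$. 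For the statement in $L^p_X(\Rn,\gamma_{-1})$: since $K$ is supported on $N$, equation \eqref{expy-x} gives $\gamma_{-1}(x)\approx\gamma_{-1}(y)$ on the support of $K$, so the Schur integrals for the weighted kernel $\gamma_{-1}(x)^{-1}K(x,y)\gamma_{-1}(y)$ are comparable to those for $K$, and the same conclusion holds.

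For the pointwise estimate I split $(0,\infty)=(0,1]\cup(1,\infty)$. Writing $a=1-e^{-t}$ and $s(t)=(1-e^{-2t})/2$, a direct algebraic rearrangement gives
$$T_t^{\CA}(x,y) = e^{-nt}\,W_{s(t)}(x-y)\,\exp\!\left(-\frac{2(x-y)\cdot y + a|y|^2}{2-a}\right).$$
Two elementary consequences of $|x-y|\le 1\wedge 1/|x|$ are $|(x-y)\cdot y|\le 2$ and $|y|^2\le |x|^2+3$ on $N$, so the drift exponential is bounded by $e^4$ on $N$ and differs from $1$ by $O(t(1+|x|^2))$. Combined with $s(t)=t+O(t^2)$ and a first-order Taylor expansion of the Gaussian in its time parameter, this yields, for $(x,y)\in N$ and $0<t\le 1$,
$$|T_t^{\CA}(x,y)-W_t(x-y)| \lesssim t\,(1+|x|^2)\,W_{ct}(x-y)$$
for some $c>0$. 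Using $\sup_{t>0}t\,W_{ct}(x-y)\lesssim |x-y|^{2-n}$ in dimension $n\ge 3$ (with bounded/logarithmic analogues for $n\le 2$), the $\sup_{0<t\le 1}$ on $N$ is dominated by $(1+|x|^2)|x-y|^{2-n}$. For $t>1$, both $T_t^{\CA}(x,y)\le Ce^{-nt}$ and $W_t(x-y)\le Ct^{-n/2}$ are uniformly bounded on $N$, so $\sup_{t>1}$ contributes only a bounded term, giving $K(x,y)\lesssim \chi_N(x,y)\bigl[(1+|x|^2)|x-y|^{2-n}+1\bigr]$.

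The Schur bounds are then routine: for fixed $x$ with $|x|\ge 1$, the $y$-section of $N$ has diameter $\lesssim 1/|x|$, so $\int |x-y|^{2-n}\chi_N(x,y)\,dy\lesssim |x|^{-2}$, which cancels the growth $1+|x|^2$; the case $|x|\le 1$ and the symmetric integral over $x$ for fixed $y$ are analogous. The main obstacle I anticipate is producing the factor $t$ in the small-time estimate: without it, $\sup_t W_t(x-y)\sim |x-y|^{-n}$ would fail to be locally integrable and $K$ would be too large. Extracting it requires controlling simultaneously the $e^{-nt}$ prefactor, the replacement of $s(t)$ by $t$ in the Gaussian, and the drift exponential, while ensuring that the polynomial factor $1+|x|^2$ arising from $a|y|^2$ is absorbed by the geometric smallness of $N$ for large $|x|$.
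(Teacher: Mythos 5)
Your overall strategy (dominate the supremum of kernel differences by a scalar kernel $K$ supported on $N$, apply Schur's test on $L^p(dx)$, and transfer to $L^p(\gamma_{-1})$ via the weight equivalence \eqref{expy-x}) is exactly what the paper does. However, there is a genuine error in the small-time kernel estimate. From your exact factorization
$T_t^{\CA}(x,y)=e^{-nt}W_{s(t)}(x-y)\exp(-D)$ with $D=\frac{2(x-y)\cdot y+a|y|^2}{2-a}$, you assert that $\exp(-D)$ ``differs from $1$ by $O(t(1+|x|^2))$.'' That is false: as $t\to 0$ one has $a\to 0$ and $D\to (x-y)\cdot y$, which does not vanish, so $|e^{-D}-1|$ does not go to zero in $t$. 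The correct bound is $|D|\lesssim |x-y|\,|y|+t|y|^2\lesssim |x-y|(1+|x|)+t(1+|x|^2)$ on $N$, and the first term is the dominant one since on $N$ we have $(1+|x|)|x-y|\lesssim 1$. Consequently
\begin{equation*}
\sup_{0<t\le 1}\bigl|T_t^{\CA}(x,y)-W_t(x-y)\bigr|
\lesssim \sup_{t>0}\Bigl[\bigl(|x-y|(1+|x|)+t(1+|x|^2)\bigr)W_{ct}(x-y)\Bigr]
\lesssim \frac{1+|x|}{|x-y|^{\,n-1}},
\end{equation*}
which is the paper's kernel, not your claimed $(1+|x|^2)|x-y|^{2-n}$; the latter is strictly smaller on $N$ and is not an upper bound for the difference.

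The gap is repairable without changing your strategy: once you insert the missing $|x-y|\,|y|$ contribution from the drift, the resulting kernel $\frac{1+|x|}{|x-y|^{n-1}}\chi_N(x,y)$ still satisfies Schur's conditions (on $N$ the $y$-section has diameter $\lesssim 1\wedge 1/|x|$, so $\int|x-y|^{1-n}\chi_N\,dy\lesssim (1+|x|)^{-1}$, cancelling the factor $1+|x|$), and the rest of your argument --- the large-time bound, Minkowski to pass the $X$-norm inside, and the weight transfer --- goes through unchanged. The paper arrives at this same kernel by a more explicit four-term decomposition of $T_t^{\CA}-W_t$ (prefactor, normalizer, Gaussian time-scale, and the shift $e^{-t}y\mapsto y$), and invokes \cite[Proposition 3.2.5]{Sa} for the weight transfer, which amounts to the same use of \eqref{expy-x} you describe.
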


\begin{proof}
Let $1\le p<\infty$ and 
{$f\in L^p_X(\Rn, dx)$}.
We have that
\begin{align*}
& |T_{*,0,loc}^{\CA} (f)(x,\omega)-W_{*,0,loc} (f)(x,\omega)|\\
& \qquad \le \int_{\Rn} \sup_{t>0}
\Big|\Big(T_{t}^{\CA} (x,y)-W_t(x,y)\Big)\chi_N(\cdot,x)\Big| \, |f(y,\omega)|\,
dy, \quad x\in\Rn, \quad\omega\in\Omega.
\end{align*}
We can write
\begin{align*}
|T_{t}^{\CA} (x,y)-W_t(x,y))|
&\lesssim  
\frac{1-e^{-nt}}{(1-e^{-2t})^{n/2}} 
\exp\Big( \frac{-|x-e^{-t}y|^2}{1-e^{-2t}}\Big)\\
& \qquad  +
\Big|
\frac{1}{(1-e^{-2t})^{n/2}}-\frac{1}{(2t)^{n/2}}
\Big|
\exp\Big( \frac{-|x-e^{-t}y|^2}{1-e^{-2t}}\Big)
\\
& \qquad  +
\frac{1}{(2t)^{n/2}}
\Big|
\exp\Big( \frac{-|x-e^{-t}y|^2}{1-e^{-2t}}\Big)
-
\exp\Big( \frac{-|x-e^{-t}y|^2}{2t}\Big)
\Big|\\
& \qquad  +
\frac{1}{(2t)^{n/2}}
\Big|
e^{-|x-e^{-t}y|^2/2t}-e^{-|x-y|^2/2t}
\Big| \\
&=:\sum_{j=1}^4 H_j(t,x,y), 
\quad x,y\in\Rn \text{ and } t>0.
\end{align*}
Consider the change of variables $t=\log\frac{1+s}{1-s}$, that defines an increasing mapping from $(0,1)$ onto $(0,\infty).$ { By using  the fact that
\begin{equation}\label{t01}
{t}\approx{(1-e^{-at})}, \text{   for every } a>0, \text{ whenever }   t\in (0,1)
\end{equation} and \eqref{expy-x}} we get
\begin{align}\label{H1}
H_1(t,x,y)\Big|_{t=\log\frac{1+s}{1-s}}
&\lesssim
\frac{t}{(1-e^{-2t})^{n/2}}
\exp\Big( \frac{-|x-e^{-t}y|^2}{1-e^{-2t}}\Big)
\Big|_{t=\log\frac{1+s}{1-s}} \nonumber \\
& \lesssim
\frac{1}{(1-e^{-2t})^{n/2-1}}
\exp\Big( \frac{-|x-e^{-t}y|^2}{1-e^{-2t}}\Big)
\Big|_{t=\log\frac{1+s}{1-s}}\nonumber\\
& \lesssim
\frac{1}{s^{n/2-1}}
e^{-|x-y|^2/4s} 
e^{-s|x+y|^2/4}
e^{-(|x|^2-|y|^2)/2} \nonumber \\
&\lesssim
\frac{ e^{- |x-y|^2/4s}}{s^{n/2-1}}
 \lesssim
\frac{s^{1/2}}{|x-y|^{n-1}}
\lesssim
\frac{1}{|x-y|^{n-1}}, \quad  t\in (0,1), \quad (x,y)\in N. 
\end{align}
On the other hand, the Mean Value Theorem leads to
$$
\Big|
\frac{1}{(1-e^{-2t})^{n/2}}
-
\frac{1}{(2t)^{n/2}}
\Big| 
\lesssim
\frac{1}{t^{n/2-1}}, \quad  t\in (0,1).
$$
Then, by proceeding as in \eqref{H1}, we obtain that
$$
H_2(t,x,y)
\lesssim
\frac{1}{t^{n/2-1}}
\exp\Big( \frac{-|x-e^{-t}y|^2}{1-e^{-2t}}\Big)
\lesssim \frac{1}{|x-y|^{n-1}}, \quad t\in (0,1), \quad (x,y)\in N.\nonumber
$$
Now, since $e^{-2t}-1+2t>0$ for $t>0$ and \eqref{t01}, we have that
\begin{align*}
& \Big|
\exp\Big( \frac{-|x-e^{-t}y|^2}{1-e^{-2t}}\Big)
-
\exp\Big( \frac{-|x-e^{-t}y|^2}{2t}\Big)
\Big| \\
& \qquad =
\exp\Big( \frac{-|x-e^{-t}y|^2}{2t}\Big)
\Big|
\exp\Big(|x-e^{-t}y|^2 \Big(\frac{1}{2t}-\frac{1}{{1-e^{-2t}}}\Big)\Big)
-1
\Big|\\
& \qquad \lesssim
\exp\Big( \frac{-|x-e^{-t}y|^2}{2t}\Big) |x-e^{-t}y|^2\frac{e^{-2t}-1+2t}{t(1-e^{-2t})}\\
& \qquad \lesssim
\exp\Big( \frac{-|x-e^{-t}y|^2}{2t}\Big)
|x-e^{-t}y|^2, \quad x,y\in\Rn \text{ and } t\in (0,1).
\end{align*}
Thus,
\begin{align*}
H_3(t,x,y)
& \lesssim
e^{-|x-e^{-t}y|^2/2t} \frac{|x-e^{-t}y|^2}{(2t)^{n/2}} 
\lesssim
\frac{e^{-c|x-e^{-t}y|^2/t}}{t^{n/2-1}}
\lesssim
\frac{1}{|x-y|^{n-1}}, \quad t\in (0,1), \, (x,y)\in N.\nonumber
\end{align*}

Finally, observe that
    \begin{align*}
        |x-e^{-t}y|^2-|x-y|^2&=|x-y+y(1-e^{-t})|^2-|x-y|^2\\
       & =|x-y|^2+|y|^2|1-e^{-t}|^2{+}2\langle x-y,y\rangle(1-e^{-t})-|x-y|^2\\
       &=|y|^2|1-e^{-t}|^2{+}2\langle x-y,y\rangle (1-e^{-t}), \quad x,y\in\Rn, \text{ and }  t>0.
    \end{align*}
Then, by using \eqref{t01} we get that,
for $x,y\in\Rn$ and 
$0<t<1 \land 1/|x|^2$,
\begin{align*}
\Big||x-e^{-t}y|^2-|x-y|^2\Big|
& \le 
|y|^2|1-e^{-t}|^2+2| x-y| \, |y| \, (1-e^{-t}) \nonumber \\
& \lesssim
|y|^2t^{3/2}
\Big(1 \land \frac{1}{|x|^2}\Big)^{1/2}
+|x-y| \, |y| \, t.
\end{align*}
Similarly,
\begin{align*}
\Big||x-e^{-t}y|^2-|x-y|^2\Big| 
&\le
|y|^2|1-e^{-t}|^2+2| x-e^{-t}y| \, |y| \, (1-e^{-t})\nonumber\\
& \lesssim
|y|^2t^{3/2}
\Big(1 \land \frac{1}{|x|^2}\Big)^{1/2}
+|x-e^{-t}y| \, |y| \, t,
\end{align*}
for any $x,y\in\Rn$ and 
$0<t<1 \land 1/|x|^2$.    
From the Mean Value Theorem and the previous estimates it follows that
\begin{align}\label{H4}
H_4(t,x,y)
&\lesssim
\exp\Big(-
\frac{(|x-y| \land |x-e^{-t}y|)^2}{2t}\Big) \frac{||x-e^{-t}y|^2-|x-y|^2|}{{t^{n/2+1}}}\nonumber\\
&\lesssim
\frac{|y|}{t^{n/2-1/2}}
\exp\Big(-c\frac{(|x-y| \land |x-e^{-t}y|)^2}{t}\Big) \nonumber \\
&\lesssim
\frac{1+|x|}{|x-y|^{n-1}}, 
\quad (x,y)\in N, \quad
0<t<1 \land \frac{1}{|x|^2}.
\end{align}
Therefore, we conclude that
$$
\sup_{0<t<1 \land 1/|x|^2}  |T_{t}^{\CA} (x,y)-W_t(x,y))|
\lesssim
\frac{{1+|x|}}{|x-y|^{n-1}}, \quad  (x,y)\in N.
$$
On the other hand, we have that
\begin{align*}
\sup_{t\ge1 \land 1/|x|^2} 
|T_{t}^{\CA} (x,y)-W_t(x,y)|
&\lesssim
\Big( 1 \land \frac{1}{|x|} \Big)^{-n} 
\lesssim
\frac{1+|x|}{|x-y|^{n-1}}, 
\quad (x,y)\in N.\nonumber
\end{align*}
Hence,
\begin{align}\label{TWle1}
\sup_{t>0}  |T_{t}^{\CA} (x,y)-W_t(x,y))| 
\lesssim
\frac{1+|x|}{|x-y|^{n-1}}, \quad   (x,y)\in N.
\end{align}

By taking into account that 
$$1 \land \frac{1}{|x|} 
\approx \frac{1}{1+|x|}
\approx \frac{1}{1+|y|}, \quad  (x,y)\in N,$$
we get
    \begin{equation}\label{Lg}
     \sup_{x\in\Rn}\int_{\Rn}\frac{1+|x|}{|x-y|^{n-1}}\chi_N(x,y)dy+\sup_{y\in\Rn}\int_{\Rn}\frac{1+|x|}{|x-y|^{n-1}}\chi_N(x,y)dx<\infty.
     \end{equation}
 Then, Schur's lemma guarantees that the operator $L$ defined by
\begin{equation}\label{eq:operatorL}
L(g)(x)
:=
\int_{\Rn}\frac{1+|x|}{|x-y|^{n-1}}g(y)\chi_N(x,y)dy
\end{equation}
is bounded from $L^p(\Rn,dx)$ into itself, for every $1\le p <\infty.$\\

We have obtained that for every $1\le p<\infty$, $T_{*,0,loc}^{\CA}-W_{*,0,loc}$ is bounded from $L^p_X(\Rn, dx)$ into itself. Moreover, according to
\cite[Proposition 3.2.5]{Sa} (which is indeed true for $p=1$), it is also bounded from $L^p_X(\Rn, \gamma_{-1})$ into itself.
  \end{proof}

\begin{Lem} \label{lem2}
$  T^{\CA}_{*,0,glob}$ is bounded from $L^1_X(\Rn, \gamma_{-1})$ into $L^{1,\infty}_X(\Rn, \gamma_{-1})$.
 \end{Lem}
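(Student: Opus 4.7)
The heat kernel $T_t^{\CA}(x,y)$ is nonnegative, so the analysis reduces to bounding a positive integral operator. First, by positivity, for every $f\in L^1_X(\Rn,\gamma_{-1})$ and every $\omega\in\Omega$,
$$T^{\CA}_{*,0,glob}(f)(x,\omega)\le\int_{\Rn} \mathcal{K}(x,y)\,|f(y,\omega)|\,dy,\qquad x\in\Rn,$$
with $\mathcal{K}(x,y):=\chi_{N^c}(x,y)\sup_{t>0}T_t^{\CA}(x,y)$. Since $X$ is a K\"othe function space, the triangle inequality for Bochner integrals gives
$$\|T^{\CA}_{*,0,glob}(f)(x,\cdot)\|_X\le\int_{\Rn}\mathcal{K}(x,y)\,\|f(y,\cdot)\|_X\,dy,$$
and setting $g(y):=\|f(y,\cdot)\|_X$ we have $\|g\|_{L^1(\Rn,\gamma_{-1})}=\|f\|_{L^1_X(\Rn,\gamma_{-1})}$. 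Thus the desired vector-valued weak-type $(1,1)$ bound reduces to the same weak-type $(1,1)$ estimate for the scalar positive operator $g\mapsto\int_{\Rn}\mathcal{K}(\cdot,y)\,g(y)\,dy$ with respect to $\gamma_{-1}$.

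For the scalar step my plan is to establish the pointwise kernel comparison
$$\mathcal{K}(x,y)\lesssim Q(x,y),\qquad (x,y)\in N^c,$$
where $Q$ is the kernel of Lemma \ref{Lem:Q}. Once this is proved, the scalar operator is dominated by the operator $\mathbb{Q}$, which by Lemma \ref{Lem:Q}(i) (see also Remark \ref{Rem:Mglob}) maps $L^1(\Rn,\gamma_{-1})$ into itself. This is stronger than the required weak-type $(1,1)$, and by the reduction above it transfers to the vector-valued setting, concluding the proof.

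\textbf{Main obstacle.} The delicate step is the pointwise estimate for $\mathcal{K}$, because the optimal $t=t^{*}(x,y)$ in the supremum is implicit. I intend to use the change of variable $s=(1-e^{-t})/(1+e^{-t})\in(0,1)$ as in Lemma \ref{lem1} and to split $N^c$ into the three subregions already used in the proof of Lemma \ref{Lem:Q}, namely $\{|x|\le 1\}$, $\{|x|\ge 1,\ |x-y|\le |x|/2\}$ and $\{|x|\ge 1,\ |x-y|>|x|/2\}$. In each piece, after extracting the factor $e^{|y|^2-|x|^2}$, the exponent $|x-e^{-t}y|^2/(1-e^{-2t})$ is balanced against the prefactor $e^{-nt}(1-e^{-2t})^{-n/2}$ and compared with the lower bounds for $\gamma_{-1}(B(x,|x-y|))$ established there. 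This will yield the desired bound $\mathcal{K}\lesssim Q$ in $N^c$. Alternatively, since Salogni's scalar weak-type $(1,1)$ for $T_{*}^{\CA}=T_{*,0}^{\CA}$ \cite[Theorem~3.3.6]{Sa} restricted to the global region already gives the scalar weak estimate required in Step~1, one may cite that result directly in place of the kernel comparison.
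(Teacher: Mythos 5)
Your reduction to the scalar problem is correct and is, in effect, what the paper does implicitly: since $T_t^{\CA}(x,y)\geq 0$ and $X$ is a K\"othe function space, $\|T^{\CA}_{*,0,glob}(f)(x,\cdot)\|_X\leq \int_{\Rn}\mathcal{K}(x,y)\|f(y,\cdot)\|_X\,dy$, and weak type $(1,1)$ for the scalar kernel operator suffices. The gap lies in your main plan: the pointwise comparison $\mathcal{K}(x,y)\lesssim Q(x,y)$ on $N^c$ is false, and the conclusion you draw from it (strong type $(1,1)$ for $T^{\CA}_{*,0,glob}$) is also false. To see that the comparison fails, take $x=Re_1$ and $y=(R-a)e_1$ with $a>1$ fixed and $R\to\infty$. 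Choosing $t$ so that $e^{-t}=1-a/R$ gives $|x-e^{-t}y|\approx 2a$ and $1-e^{-2t}\approx 2a/R$, hence
$$\mathcal{K}(x,y)\geq T_t^{\CA}(x,y)\gtrsim (R/a)^{n/2}\,e^{-2aR+a^2}.$$
On the other hand, $Q(x,y)=e^{|y|^2}/\gamma_{-1}(B(x,a))$ and the lower bound on the ball measure established in the proof of Lemma \ref{Lem:Q} gives $\gamma_{-1}(B(x,a))\gtrsim R^{-2}a^{n-2}e^{R^2+a^2/4}e^{aR/4}$, so $Q(x,y)\lesssim R^2a^{2-n}e^{3a^2/4}e^{-2aR}e^{-aR/4}$. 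Therefore $\mathcal{K}(x,y)/Q(x,y)\gtrsim c(a)\,R^{n/2-2}\,e^{aR/4}\to\infty$, so the heat maximal kernel is genuinely larger than $Q$ in the ``radial, $|y|<|x|$'' part of the global region; indeed $\int_{\Rn}\mathcal{K}(x,y)\,e^{|x|^2}\,dx$ is not $\lesssim e^{|y|^2}$ uniformly, so the global operator is only weak type $(1,1)$, exactly as the paper states.

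Your proposed fallback is essentially the paper's proof, but the citation should be sharpened: Salogni's Theorem 3.3.6 concerns the full maximal operator $T_*^{\CA}$, and truncating to the global region is not automatic from that statement. The paper instead invokes the ingredients of Salogni's proof directly, namely \cite[Lemma 3.3.3]{Sa} to obtain the pointwise kernel bound
$$\sup_{t>0}T_t^{\CA}(x,y)\lesssim e^{|y|^2-|x|^2}\Big[(1+|x|)^n\wedge(|x|\sin\theta(x,y))^{-n}\Big],\qquad (x,y)\in N^c,$$
and \cite[Lemma 3.3.4]{Sa} to deduce the weak-type $(1,1)$ estimate with respect to $\gamma_{-1}$ from that bound. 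Once those two lemmas are cited, your scalar reduction finishes the argument, so only the kernel comparison step needs to be replaced.
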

\begin{proof}
We can write,
for every 
$x,y\in\Rn$ and $s\in (0,1)$,
$$
T_{\log\frac{1+s}{1-s}}^\CA(x,y)
=\frac{(1-s)^n}{\pi^{n/2}(4s)^{n/2}}
\exp{\left(-\frac{|x(1-s)+y(1+s)|^2}{4s}\right)}e^{-|x|^2+|y|^2}.
$$
According to \cite[Lemma 3.3.3]{Sa} we obtain, for $(x,y)\in N^c$ and $x,y\neq 0$,
\begin{align}\label{eq: 4.01}
& \sup_{t>0} \frac{e^{-nt}}{\pi^{n/2}(1-e^{-2t})^{n/2}} 
\exp\Big(-\frac{|x-e^{-t}y|^2}{1-e^{-2t}}\Big)
\nonumber \\
& \qquad =
\sup_{s\in (0,1)}\frac{(1-s)^n}{\pi^{n/2}(4s)^{n/2}}\exp{\left(-\frac{|x(1-s)+y(1+s)|^2}{4s}\right)}e^{-|x|^2+|y|^2}\nonumber\\
& \qquad \lesssim
e^{-|x|^2+|y|^2}\Big[
(1+|x|)^n\land(|x|\sin\theta(x,y))^{-n}
\Big],
\end{align}
where $\theta(x,y)$ represents the angle between $x$ and $y\in\Rn\setminus\{0\}$, when $n>1$; and $\theta(x,y)=0$, $x,y\in\mathbb{R}\setminus\{0\}$.\\

Then, for every
$x\in\Rn$ and $\omega\in\Omega$,
\begin{align*}
T^{\CA}_{*,0,glob}(f)(x,\omega)
& \lesssim
\int_{\Rn} \sup_{t>0} \frac{e^{-nt} }{(1-e^{-2t})^{n/2}}
\exp\Big(-\frac{|x-e^{-t}y|^2}{1-e^{-2t}}\Big)
|f(y,\omega)|\chi_{N^c}(x,y) dy\\
&\lesssim
\int_{\Rn}e^{-|x|^2+|y|^2}\Big[
(1+|x|)^n\land(|x|\sin\theta(x,y))^{-n}\Big]|f(y,\omega)|\chi_{N^c}(x,y) dy.
\end{align*}
From  \cite[Lemma 3.3.4]{Sa}, we deduce that $  T^{\CA}_{*,0,glob}$ is bounded from $L^1_X(\Rn, \gamma_{-1})$ into $L^{1,\infty}_X(\Rn, \gamma_{-1})$.
\end{proof}

\begin{Lem}\label{lem3}  
For every $1<p<\infty$,
$T^{\CA}_{*,0,glob}$  is bounded from $L^p_X(\Rn,\gamma_{-1})$ into itself.
\end{Lem}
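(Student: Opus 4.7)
I will follow the same strategy that produced the weak-type estimate in Lemma \ref{lem2}: dominate $T^{\CA}_{*,0,glob}$ pointwise by a positive integral operator and then transfer a scalar bound to the vector-valued setting via the positivity of the kernel. By the estimate \eqref{eq: 4.01} established in the proof of Lemma \ref{lem2}, pulling the supremum inside the integral defining $T^{\CA}_t(f)$ gives
$$
T^{\CA}_{*,0,glob}(f)(x,\omega)
\;\lesssim\;
\mathbb{U}\bigl(|f(\cdot,\omega)|\bigr)(x),
\quad x\in\Rn,\ \omega\in\Omega,
$$
where $\mathbb{U}$ is the positive scalar integral operator
$$
\mathbb{U}(g)(x)
:=\int_{\Rn} K(x,y)\,g(y)\,dy,
\qquad
K(x,y):=e^{-|x|^2+|y|^2}\bigl[(1+|x|)^n\wedge(|x|\sin\theta(x,y))^{-n}\bigr]\chi_{N^c}(x,y),
$$
with $\theta(x,y)$ the angle between $x$ and $y$ (and $\theta\equiv 0$ in dimension one).

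\textbf{Step 1: scalar $L^p$-boundedness of $\mathbb{U}$.} The main technical task is to prove that, for every $1<p<\infty$,
$$
\|\mathbb{U}\|_{L^p(\Rn,\gamma_{-1})\to L^p(\Rn,\gamma_{-1})}<\infty.
$$
This is exactly the scalar content of \cite[Theorem 3.3.6]{Sa}: Salogni's proof that $T^{\CA}_*$ is bounded on $L^p(\Rn,\gamma_{-1})$ for $1<p\le\infty$ actually proceeds via the same positive majorant kernel $K$ given above, and therefore establishes the $L^p$-boundedness of $\mathbb{U}$ itself. Alternatively, one can interpolate: the estimate (4.01) combined with \cite[Lemma 3.3.4]{Sa} (the latter used also in our proof of Lemma \ref{lem2}) yields the weak-type $(1,1)$ bound $\|\mathbb{U}\|_{L^1(\gamma_{-1})\to L^{1,\infty}(\gamma_{-1})}<\infty$, and a strong $L^q$ endpoint for some $q>p$ can be obtained through a weighted Schur estimate exploiting the balance between the factor $e^{|y|^2-|x|^2}$ and the angular decay $(|x|\sin\theta(x,y))^{-n}$ available on the global region $N^c$; Marcinkiewicz interpolation then covers every $1<p<\infty$.

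\textbf{Step 2: transfer to the vector-valued setting.} Because $K\ge 0$, Minkowski's inequality for the $X$-valued Bochner integral in the variable $y$ gives, for every $f\in L^p_X(\Rn,\gamma_{-1})$ and a.e. $x\in\Rn$,
$$
\bigl\|\mathbb{U}(|f|)(x,\cdot)\bigr\|_X
\;\le\;
\int_{\Rn}K(x,y)\,\|f(y,\cdot)\|_X\,dy
\;=\;\mathbb{U}\bigl(y\mapsto\|f(y,\cdot)\|_X\bigr)(x).
$$
The scalar function $g(y):=\|f(y,\cdot)\|_X$ belongs to $L^p(\Rn,\gamma_{-1})$ with $\|g\|_{L^p(\gamma_{-1})}=\|f\|_{L^p_X(\gamma_{-1})}$; applying the scalar bound from Step 1 to $g$ and using the pointwise domination from the beginning of the plan yields
$$
\bigl\|T^{\CA}_{*,0,glob}(f)\bigr\|_{L^p_X(\gamma_{-1})}
\;\lesssim\;\|f\|_{L^p_X(\gamma_{-1})},
$$
which is the claim.

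\textbf{Main obstacle.} The crux is the scalar $L^p(\gamma_{-1})$-boundedness of $\mathbb{U}$ in Step 1. The exponential growth $e^{|y|^2}$ of the inverse Gaussian weight against which one integrates has to be absorbed by the decay factor $e^{-|x|^2}$ in $K$ together with the geometric restriction to $N^c$ and the angular term; at $p=1$ this absorption is sharp and only yields the weak-type inequality of Lemma \ref{lem2}, while for $1<p<\infty$ the improvement is delicate and is precisely the non-trivial analytic content, which we import from \cite{Sa}. Once Step 1 is in hand, Step 2 (and hence the whole lemma) is essentially automatic from the positivity of the dominating kernel.
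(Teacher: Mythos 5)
Your Step 2 (transfer to the vector--valued setting through a positive kernel majorant and Minkowski) is exactly the right idea and matches the paper. The genuine gap is Step 1, the scalar $L^p(\gamma_{-1})$--boundedness of $\mathbb{U}$, which you do not actually establish.

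The first route you propose, citing \cite[Theorem 3.3.6]{Sa}, is unlikely to close the gap: for a symmetric diffusion semigroup the $L^p$--boundedness of the maximal operator for $1<p<\infty$ is an automatic consequence of Stein's maximal ergodic theorem, so Salogni had no need to run an $L^p$ Schur test on a positive majorant, and there is no evidence she did. The scalar $L^p$--bound on $T^{\CA}_{*}$ obtained this way tells you nothing about the positive operator $\mathbb{U}$, because $\mathbb{U}$ strictly dominates $T^{\CA}_{*,0,glob}$ after the supremum has been pulled inside the integral. The second route (Marcinkiewicz interpolation) is also blocked: you would need a strong or weak type $(q,q)$ bound for $\mathbb{U}$ at some $q>1$ to interpolate against the weak $(1,1)$ bound of Lemma~\ref{lem2}, but $\mathbb{U}$ is \emph{not} bounded on $L^\infty$ --- $\int K(x,y)\,dy$ diverges since for fixed $x$ the factor $e^{|y|^2-|x|^2}$ is not compensated once the supremum in $t$ has been taken --- and there is no free strong endpoint to interpolate from. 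Your ``weighted Schur estimate'' is only gestured at, and in fact it cannot work with the kernel you chose: after multiplying $K(x,y)$ by $e^{(|x|^2-|y|^2)/p}$ you are left with $e^{(1-1/p)(|y|^2-|x|^2)}\bigl[(1+|x|)^n\wedge(|x|\sin\theta)^{-n}\bigr]$, and on the sector where $y$ is nearly parallel to $x$ with $|y|\gg|x|$ the angular factor gives no decay in $|y|$, so $\sup_x\int(\cdots)\,dy=\infty$.

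The missing ingredient is a sharper pointwise bound on $\sup_{t>0}|T^{\CA}_t(x,y)|$ on the global region. The paper bypasses the Salogni--type estimate~\eqref{eq: 4.01} (which is tailored to the weak $(1,1)$ endpoint and used only in Lemma~\ref{lem2}) and instead invokes \cite[Proposition 2.1]{MPS}, which for $\langle x,y\rangle>0$ yields the crucial extra exponential factor $\exp\bigl(-\tfrac12|x-y|\,|x+y|\bigr)$. Combined with the elementary inequality $\bigl||y|^2-|x|^2\bigr|\le|x-y|\,|x+y|$, this factor absorbs $e^{(1-1/p)(|y|^2-|x|^2)}$ for every $1<p<\infty$, and the resulting kernel $|x+y|^n\exp\bigl(-\bigl(\tfrac12-|\tfrac1p-\tfrac12|\bigr)|x-y|\,|x+y|\bigr)$ then passes the Schur test (the computations are carried out as in \cite[p.~501]{Pe}). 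You should replace your kernel estimate by this one and run the Schur test explicitly; without it, Step~1, and hence the lemma, is not proved.
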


\begin{proof}
Let $1<{p}<\infty.$
Observe that
\begin{align*}
&\Big( \int_{\Rn}\sup_{t>0} \Big|T^{\CA}_{t}
\Big(f(\cdot)\chi_{N^c}(\cdot,x)
\Big)(x)\Big|^p 
e^{|x|^2}dx\Big)^{1/p}
\\
&\qquad \le 
\Big( \int_{\Rn} \Big( \int_{\Rn}\sup_{t>0} |T_t^\CA(x,y)|\chi_{N^c}(x,y)
e^{(|x|^2-|y|^2)/p}
|f(y)|e^{|y|^2/p}dy\Big)^pdx\Big)^{1/p}.
\end{align*}
Therefore, it is enough to prove that the operator associated to the kernel
$$
\sup_{t>0} |T_t^\CA(x,y)|\chi_{N^c}(x,y)e^{(|x|^2-|y|^2)/p}
$$ 
is of strong type $p$ with respect to the Lebesgue measure.\\

According to \cite[Proposition 2.1]{MPS}, we obtain, for every $(x,y)\in N^c, $
\begin{align}\label{Ttglob}
\sup_{t>0}|T_t^{\CA}(x,y)|
&\lesssim
\begin{cases}
e^{-|x|^2},  & \mbox{if  $\langle x,y\rangle \le 0$},\\
\left(\frac{|x+y|}{|x-y|}\right)^{n/2}\exp{\left( \frac{|y|^2-|x|^2}{2}-\frac{|x-y||x+y|}{2}\right)}, &\mbox{if $\langle x,y\rangle>0$},
\end{cases}\nonumber\\
&\lesssim
\begin{cases}
e^{-|x|^2},  & \mbox{if  $\langle x,y\rangle \le 0$},\\
{|x+y|}^{n}\exp{\left( \frac{|y|^2-|x|^2}{2}-\frac{|x-y||x+y|}{2}\right)}, &\mbox{if $\langle x,y\rangle>0$}.
\end{cases}
\end{align}
In the last inequality we have used that $|x+y||x-y|\gtrsim 1$ for $(x,y)\in N^c$ and $\langle x,y\rangle>0$.
Since 
$$||y|^2-|x|^2|\le |x+y||x-y|,
\quad x,y\in\Rn,$$
we get
\begin{align*}
& \int_{\Rn}
e^{(|x|^2-|y|^2)/p}
\sup_{t>0}|T_t^{\CA}(x,y)|
\chi_{N^c}(x,y)dy \\
& \qquad \le 
\int_{\langle x,y\rangle \le 0}e^{-|x|^2(1-1/p)-|y|^2/p}dy \\
& \qquad \qquad +
\int_{\langle x,y\rangle > 0}|x+y|^n \exp{\Big(-\Big(\frac{1}{2}-\Big|\frac{1}{{p}}-\frac{1}{2}\Big|\Big)|x-y||x+y|\Big)}dy.
\end{align*}
Thus, by proceeding as in \cite[p. 501]{Pe}, we obtain that
$$
\sup_{x\in\Rn}\int_{\Rn}
e^{(|x|^2-|y|^2)/p}
\sup_{t>0}|T_t^{\CA}(x,y)|\chi_{N^c}(x,y)dy<\infty.
$$
Also, we have that
$$
\sup_{y\in\Rn}\int_{\Rn}
e^{(|x|^2-|y|^2)/p}
\sup_{t>0}|T_t^{\CA}(x,y)|\chi_{N^c}(x,y)dx<\infty.
$$
We conclude that the operator $\mathbb{L}$ defined by
$$
\mathbb{L}(g)(x)
:=
\int_{\Rn}
e^{(|x|^2-|y|^2)/p}
\sup_{t>0}|T_t^{\CA}(x,y)|\chi_{N^c}(x,y)g(y)dy, \quad  x\in\Rn,
$$
is bounded from $L^p(\Rn, {dx})$ into itself. Hence, the operator $T^{\CA}_{*,0,glob}$  is bounded from $L^p_X(\Rn,\gamma_{-1})$ into itself.
\end{proof}

\begin{proof}[Proof of Proposition \ref{Prop4.1}]
Suppose that $T^{\CA}_{*,0}$ is bounded from $L^1_X(\Rn,\gamma_{-1})$  into  $L^{1,\infty}_X($ $\Rn,\gamma_{-1})$. From Lemma \ref{lem2} we know that $T^{\CA}_{*,0,glob}$ is bounded from $L^1_X(\Rn,\gamma_{-1})$  into $L^{1,\infty}_X(\Rn,\gamma_{-1})$, then the same boundeness property holds for $T^{\CA}_{*,0,loc}$. Moreover, Lemma \ref{lem1} states that
$T_{*,0,loc}^{\CA}-W_{*,0,loc}$ is bounded from $L^1_X(\Rn, \gamma_{-1})$ into $L^{1,\infty}_X(\Rn,\gamma_{-1})$, so $W_{*,0,loc}$ has also this property. Then, according to \cite[Proposition 3.2.5]{Sa}, $W_{*,0,loc}$  is bounded  from $L^1_X(\Rn, dx)$ into $L^{1,\infty}_X(\Rn,dx)$. Furthermore, since $W_{*,0}$ is dilation invariant, by proceeding as in the proof of \cite[Theorem 1.10]{HTV1}, we obtain that $W_{*,0}$ is bounded from $L^1_X(\Rn, dx)$ into $L^{1,\infty}_X(\Rn,dx)$.\\

Assume now that  $W_{*,0}$ is bounded from $L^1_X(\Rn, dx)$ into $L^{1,\infty}_X(\Rn,dx)$. We have that
\begin{equation}\label{eq:4.1}
\sup_{t>0} W_t(z)
\lesssim
\frac{1}{|z|^n}, \quad z\in\Rn\setminus\{0\}.
\end{equation}
Then, according to \cite[Propositions 3.2.5 and 3.2.7]{Sa} we deduce that $W_{*,0, loc}$ is bounded from  $L^1_X(\Rn, \gamma_{-1})$ into $L^{1,\infty}_X(\Rn,\gamma_{-1})$. Note that the size condition \eqref{eq:4.1} is sufficient to obtain this property. Furthermore, from Lemmas \ref{lem1} and \ref{lem2} we know that $T_{*,0,loc}^{\CA}-W_{*,0,loc}$ and  $T_{*,0,glob}^{\CA}$ are bounded from $L^1_X(\Rn, \gamma_{-1})$ into $L^{1,\infty}_X(\Rn,\gamma_{-1})$, we conclude that $T_{*,0}^{\CA}$ is bounded from $L^1_X(\Rn, \gamma_{-1})$ into $L^{1,\infty}_X(\Rn,\gamma_{-1})$.\\

Thus, $(b)$ is justified. Property $(a)$ can be proven in a similar way.
\end{proof}

\subsection{$L^p$-boundedness 
properties for $T_{*,1}^\CA$}
\label{subsect:T1}


Now we concentrate on the following.

\begin{Prop}\label{Prop4.5}
Let $X$ be a K\"othe function space.
\begin{itemize}
    \item[$(a)$] For $1<p<\infty$, $ T_{*,1}^{\CA}$ is bounded from $L^p_X(\Rn, \gamma_{-1})$ into itself if, and only if, $W_{*,1}$ is bounded from $L^p_X(\Rn, dx)$ into itself.
     \item[$(b)$] $T_{*,1}^{\CA}$ is bounded from $L^1_X(\Rn, \gamma_{-1})$ into $L^{1,\infty}_X(\Rn, \gamma_{-1})$ if, and only if, $W_{*,1}$ is bounded from $L^1_X(\Rn, dx)$ into $L^{1,\infty}_X(\Rn, dx)$. \end{itemize}
\end{Prop}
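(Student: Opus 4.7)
The plan is to mimic the three-step strategy of Proposition \ref{Prop4.1}. We decompose
$T_{*,1}^{\CA}\le T_{*,1,loc}^{\CA}+T_{*,1,glob}^{\CA}$, and aim to establish analogues of Lemmas \ref{lem1}, \ref{lem2} and \ref{lem3}: \textbf{(i)} $T_{*,1,loc}^{\CA}-W_{*,1,loc}$ is bounded on $L^p_X(\Rn,dx)$, and hence, by \cite[Proposition 3.2.5]{Sa}, on $L^p_X(\Rn,\gamma_{-1})$, for every $1\le p<\infty$; \textbf{(ii)} $T_{*,1,glob}^{\CA}$ is bounded from $L^1_X(\Rn,\gamma_{-1})$ into $L^{1,\infty}_X(\Rn,\gamma_{-1})$; \textbf{(iii)} $T_{*,1,glob}^{\CA}$ is bounded on $L^p_X(\Rn,\gamma_{-1})$ for every $1<p<\infty$. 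Once these are in hand, the equivalences $(a)$ and $(b)$ follow from the concluding arguments in the proof of Proposition \ref{Prop4.1}, using the dilation invariance of $W_{*,1}$ and the transference between Euclidean and Gaussian local operators.

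For \textbf{(i)}, the starting point is the logarithmic-derivative computation
$$
t\partial_t T_t^{\CA}(x,y) = T_t^{\CA}(x,y)\,\Phi_t(x,y),
\qquad
t\partial_t W_t(x-y) = W_t(x-y)\Bigl(\tfrac{|x-y|^2}{2t}-\tfrac{n}{2}\Bigr),
$$
where $\Phi_t(x,y)$ is a rational function of $e^{-t}$ with polynomial dependence on $x$, $y$ and $\langle x,y\rangle$, produced by differentiating the three factors $e^{-nt}$, $(1-e^{-2t})^{-n/2}$ and $\exp(-|x-e^{-t}y|^2/(1-e^{-2t}))$. I would split $t\partial_t(T_t^{\CA}(x,y)-W_t(x,y))$ as a finite sum of telescopic differences mirroring the $H_1,\dots,H_4$ in the proof of Lemma \ref{lem1}, and apply the change of variables $t=\log\frac{1+s}{1-s}$, the equivalence \eqref{t01}, and the local size control \eqref{expy-x}. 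The extra polynomial factors introduced by differentiation, such as $|x-e^{-t}y|/\sqrt{t}$ and $|y|$, are absorbed by the decaying exponentials, eventually yielding on $N$ a pointwise bound of the form $(1+|x|)^k/|x-y|^{n-1}$ for a fixed integer $k$. This still satisfies Schur-type estimates analogous to \eqref{Lg}, so Schur's lemma applied to the associated operator (compare \eqref{eq:operatorL}) gives the desired $L^p(\Rn,dx)$-boundedness of $T_{*,1,loc}^{\CA}-W_{*,1,loc}$.

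For \textbf{(ii)} and \textbf{(iii)}, sharp pointwise bounds are required for $\sup_{t>0}|t\partial_t T_t^{\CA}(x,y)|$ on $N^c$, analogous to \eqref{eq: 4.01} and \eqref{Ttglob}. A convenient route is the identity $\partial_t T_t^{\CA}=-\CA_x T_t^{\CA}=(\tfrac{1}{2}\Delta_x+x\cdot\nabla_x)T_t^{\CA}$, which converts the time derivative into spatial derivatives for which the global pointwise estimates in \cite[Proposition 2.1]{MPS} and \cite[Lemma 3.3.3]{Sa} (or minor variants thereof) are available. With such bounds, the weak-type $(1,1)$ part follows from \cite[Lemma 3.3.4]{Sa} as in Lemma \ref{lem2}, while for $1<p<\infty$ the Schur lemma with the weight $e^{(|x|^2-|y|^2)/p}$ performed as in Lemma \ref{lem3} delivers the strong type $p$ bound.

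The hard part is precisely the derivation of the global pointwise estimates in steps \textbf{(ii)}--\textbf{(iii)}: differentiation introduces factors such as $|x-e^{-t}y|^2/(1-e^{-2t})$ and $|y|\,|x-e^{-t}y|/(1-e^{-2t})$ whose $\sup_t$ is not automatically controlled by $\sup_t T_t^{\CA}(x,y)$. One must absorb these into the exponential $\exp(-|x-y||x+y|/2)$ available on $N^c$ when $\langle x,y\rangle>0$, at the cost of a slightly smaller constant in the exponent, and exploit $|x+y||x-y|\gtrsim 1$ on the global region to obtain integrable bounds. A careful case analysis according to the sign of $\langle x,y\rangle$ and the size of $|x-y|$ should keep the extra polynomial growth harmless for the Schur-type arguments, completing the proof.
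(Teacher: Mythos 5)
Your high-level plan is correct and matches the paper's architecture: split $T_{*,1}^{\CA}$ into local and global parts and establish analogues of Lemmas \ref{lem1}--\ref{lem3} (the paper does exactly this in Lemmas \ref{Lem4.6}, \ref{Lem:4.8} and \ref{Lem:4.9}), then transfer to $W_{*,1}$ via dilation invariance and \cite[Proposition 3.2.5]{Sa}. Your step \textbf{(i)} is essentially the paper's Lemma \ref{Lem4.6}: direct differentiation in $t$, telescopic splitting into several $H_{ij}$ terms, change of variable $t=\log\frac{1+s}{1-s}$, use of \eqref{t01} and \eqref{expy-x}, ending with a pointwise bound $(1+|x|)/|x-y|^{n-1}$ on $N$ and Schur's lemma via \eqref{Lg}/\eqref{eq:operatorL}. (Note the paper obtains the first power of $(1+|x|)$, not an arbitrary $(1+|x|)^k$; higher powers would not satisfy the Schur condition \eqref{Lg}, so you do need the exponentials to absorb the polynomial growth completely.)

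Where you diverge from the paper, and where your proposal is incomplete, is in steps \textbf{(ii)}--\textbf{(iii)}. You suggest rewriting $\partial_t T_t^{\CA}=(\tfrac12\Delta_x+x\cdot\nabla_x)T_t^{\CA}$ and invoking ``the global pointwise estimates in \cite[Proposition 2.1]{MPS} and \cite[Lemma 3.3.3]{Sa} (or minor variants thereof).'' That is a genuinely different route from the paper, but the references you point to control the Mehler kernel itself, not its spatial gradients or Hessian; the ``minor variants'' you would need are in fact the technical core of the result, and you do not supply them. The paper instead differentiates the explicit kernel in $t$, obtaining a three-term decomposition $t\partial_t T_t^{\CA}(x,y)=\sum_{j=1}^3 K_j(t,x,y)$ (equation \eqref{eq:partialtHt}), and estimates each $K_j$ separately via a careful case analysis in $r=e^{-t}$ and the geometry of $(x,y)$ (decomposing by $|y|\lessgtr 2|x|$, by $r_0=\tfrac{|y|}{|x|}\cos\theta$, and several sub-cases adapted from \cite[Proposition 5.1]{BrSj}), before applying \cite[Lemma 3.3.4]{Sa} and \cite[Lemmas 4.1--4.4]{BrSj}. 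Your proposal flags this as ``the hard part'' and sketches what would be needed, but does not carry it out, so it should be treated as a gap rather than an alternative proof. Finally, you omit a subtlety in step \textbf{(iii)}: the paper's Lemma \ref{Lem:4.9} must enlarge the global region to $N_\beta^c$ with $\beta$ depending on $p$ (and correspondingly use the remark after Lemma \ref{Lem4.6} that the local estimate is uniform in $\beta$) in order to push through the Schur argument with the weight $e^{(|x|^2-|y|^2)/p}$; the $\beta=1$ region alone does not give enough decay for the $K_2$ and $K_3$ pieces when $1<p<\infty$.
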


As in the previous section, Proposition \ref{Prop4.5} is decomposed into Lemmas 
\ref{Lem4.6},
\ref{Lem:4.8} and 
\ref{Lem:4.9}  that we present next.\\

We define
$$
T_{*,1,loc}^{\CA} (f)(x,\omega)
:=
\sup_{t>0}
\Big|t\partial_tT_t^{\CA} 
\Big(f(\cdot, \omega)\chi_N(\cdot,x)\Big)
(x)\Big|,
\quad x\in\Rn \text{ and } \omega\in\Omega
$$
and
$$
T_{*,1,glob}^{\CA} (f)(x,\omega)
:=
\sup_{t>0}
\Big|t\partial_tT_t^{\CA}
\Big(f(\cdot, \omega)\chi_{N^c}(\cdot,x)\Big)(x)
\Big|,
\quad x\in\Rn \text{ and } \omega\in\Omega.
$$
In a similar way we introduce the operators $W_{*,1,loc}$ and $W_{*,1,glob}$.

\begin{Lem}\label{Lem4.6}
 For every $1\le p<\infty$, $T_{*,1,loc}^{\CA}-W_{*,1,loc}$ is bounded from $L^p_X(\Rn, dx)$ into itself and from $L^p_X(\Rn, \gamma_{-1})$ into itself.
 \end{Lem}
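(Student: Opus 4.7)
The strategy mirrors that of Lemma \ref{lem1}. I would reduce the claim to a pointwise kernel bound
$$
\sup_{t>0} \bigl| t\partial_t T_t^{\CA}(x,y) - t\partial_t W_t(x,y) \bigr| \lesssim \frac{1+|x|}{|x-y|^{n-1}}, \quad (x,y) \in N,
$$
and then invoke Schur's lemma for the operator $L$ of \eqref{eq:operatorL} together with \cite[Proposition 3.2.5]{Sa} to transfer the resulting $L^p(\Rn, dx)$-estimate to $L^p_X(\Rn, \gamma_{-1})$.

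First I would compute $t\partial_t T_t^{\CA}(x,y)$ by applying the product rule to the three factors $e^{-nt}$, $(1-e^{-2t})^{-n/2}$ and $\exp\bigl(-|x-e^{-t}y|^2/(1-e^{-2t})\bigr)$, and analogously obtain $t\partial_t W_t(x-y) = W_t(x-y)\bigl(|x-y|^2/(2t) - n/2\bigr)$. Both expressions share the same structure: a heat-type Gaussian multiplied by a polynomial in a scale-invariant quantity ($|x-e^{-t}y|^2/(1-e^{-2t})$ or $|x-y|^2/(2t)$), and the polynomial factor is always absorbed uniformly in $t>0$ by the Gaussian.

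The analysis then splits, as in Lemma \ref{lem1}, according to whether $0 < t < 1 \land 1/|x|^2$ or $t \geq 1 \land 1/|x|^2$. In the second regime each of $t\partial_t T_t^{\CA}(x,y)$ and $t\partial_t W_t(x,y)$ is bounded uniformly in $t$ by a constant multiple of $(1\land 1/|x|)^{-n}$, which on $N$ is controlled by $(1+|x|)/|x-y|^{n-1}$, as in \eqref{TWle1}. In the first regime I would decompose the difference into pieces $\widetilde H_1, \dots, \widetilde H_m$ analogous to $H_1,\ldots,H_4$ of Lemma \ref{lem1}, with additional terms generated by the derivatives of the coefficient factors $e^{-nt}$ and $(1-e^{-2t})^{-n/2}$ and by the derivative of the exponent $-|x-e^{-t}y|^2/(1-e^{-2t})$. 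After the change of variables $t=\log\frac{1+s}{1-s}$, and using \eqref{t01} and \eqref{expy-x}, each piece should yield the desired bound by the same Gaussian absorption trick that produces the inequality $e^{-c|x-y|^2/s}/s^{(n-1)/2}\lesssim s^{1/2}/|x-y|^{n-1}$ in \eqref{H1}.

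The main obstacle is to check that the extra polynomial factors produced by $t\partial_t$ do not spoil the Schur-friendly kernel bound. The derivative of $(1-e^{-2t})^{-n/2}$ introduces an additional power $1/(1-e^{-2t}) \approx 1/t$ for small $t$, and the derivative of the exponent introduces a factor of order $|x-e^{-t}y|^2/(1-e^{-2t})^2$ (plus a cross term linear in $(x-e^{-t}y)\cdot y$). Both can be compensated, in the spirit of the estimates for $H_3$ and $H_4$, by peeling off one Gaussian factor $e^{-c|x-e^{-t}y|^2/(1-e^{-2t})}$ to absorb the polynomial, leaving the complementary Gaussian to produce an integrable kernel dominated by $(1+|x|)/|x-y|^{n-1}$ on $N$. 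Once this pointwise kernel estimate is in place, Schur's lemma (via \eqref{Lg}) yields the $L^p(\Rn, dx)$-boundedness of the associated integral operator, and \cite[Proposition 3.2.5]{Sa} transfers it to $L^p_X(\Rn, \gamma_{-1})$, completing the proof.
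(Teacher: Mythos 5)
Your proposal matches the paper's proof in both structure and key steps: the same target pointwise bound $\sup_{t>0}|t\partial_t T_t^{\CA}(x,y)-t\partial_t W_t(x,y)|\lesssim (1+|x|)/|x-y|^{n-1}$ on $N$, the same small-$t$/large-$t$ split with the change of variables $t=\log\frac{1+s}{1-s}$ and Gaussian absorption, the same telescoping decomposition of the kernel difference (the paper organizes it into $H_1,H_2,H_3$ with further subterms, mirroring $H_1,\dots,H_4$ of Lemma \ref{lem1}), and the same conclusion via Schur's lemma applied to the operator $L$ of \eqref{eq:operatorL} followed by \cite[Proposition 3.2.5]{Sa}. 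This is essentially identical to the paper's argument.
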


\begin{proof}
Observe that, for all 
$x,y\in\Rn$ and $t>0$,
\begin{align*}
\partial_t T_t^\CA(x,y)
& =   
\frac{1}{\pi^{n/2}}  
\exp\Big( \frac{-|x-e^{-t}y|^2}{1-e^{-2t}} \Big)
\Big[
- \frac{n e^{-nt}}{(1-e^{-2t})^{(n+2)/2}} \nonumber \\
& \qquad \qquad \qquad \qquad  \qquad \quad  
-  \frac{2e^{-(n+1)t}}{(1-e^{-2t})^{(n+2)/2}} \,
\sum_{i=1}^n  y_i (x_i - e^{-t}y_i) \,
 \nonumber \\
& \qquad \qquad \qquad \qquad  \qquad \quad  
+
\frac{2e^{-(n+2)t}}{(1-e^{-2t})^{(n+4)/2}} \,
|x-e^{-t}y|^2 \,
\Big]
 \end{align*}
 and
 $$
 \partial_t W_t(x-y)=\frac{1}{(2\pi)^{n/2}}\left(-\frac{n}{2}\frac{1}{t^{n/2+1}}+\frac{|x-y|^2}{2t^{n/2+2}} \right) e^{-|x-y|^2/2t}, \quad  x,y\in\Rn, \:  t>0.
 $$

 In the local region, i.e., when $(x,y)\in N$, we are going to estimate
$$
\sup_{t>0}
\Big|t\Big( \partial_t T_t^\CA(x,y)-\partial_t W_t(x-y)\Big)\Big|.
 $$
 We can write
\begin{align*}
& t \Big(\partial_t T_t^\CA(x,y)-\partial_t W_t(x-y)\Big) \\
& \qquad = 
\frac{1}{\pi^{n/2}} 
\Big[
-
\frac{nte^{-nt}}{(1-e^{-2t})^{(n+2)/2}}
\exp\Big( \frac{-|x-e^{-t}y|^2}{1-e^{-2t}} \Big)
+\frac{nt}{(2t)^{n/2+1}}
e^{-|x-y|^2/2t}\\
& \qquad \qquad \quad +  
\frac{2te^{-(n+2)t}|x-e^{-t}y|^2 }{(1-e^{-2t})^{(n+4)/2}} \,
\exp\Big( \frac{-|x-e^{-t}y|^2}{1-e^{-2t}} \Big)-\frac{2t|x-y|^2}{(2t)^{n/2+2}} e^{-|x-y|^2/2t}
\\
& \qquad \qquad \quad
 -  \,
\frac{2te^{-(n+1)t}}{(1-e^{-2t})^{(n+2)/2}} \,
\sum_{i=1}^n  y_i (x_i - e^{-t}y_i)\exp\Big( \frac{-|x-e^{-t}y|^2}{1-e^{-2t}} \Big) \Big]\\
& \qquad =: \frac{1}{\pi^{n/2}}  \sum_{j=1}^3 H_j(t,x,y),\quad x,y\in\Rn, \: t>0.
 \end{align*}
 We decompose $H_1$ as follows
 \begin{align*}
 H_1(t,x,y)
 &= -
\frac{nt(e^{-nt}-1)}{(1-e^{-2t})^{(n+2)/2}}
\exp\Big( \frac{-|x-e^{-t}y|^2}{1-e^{-2t}} \Big)\\
&\quad -
nt
\Big[
\frac{1}{(1-e^{-2t})^{(n+2)/2}}-\frac{1}{(2t)^{n/2+1}}\Big]
\exp\Big( \frac{-|x-e^{-t}y|^2}{1-e^{-2t}} \Big)\\
&\quad -
\frac{nt}{(2t)^{n/2+1}}
\Big[
\exp\Big( \frac{-|x-e^{-t}y|^2}{1-e^{-2t}} \Big)
-
\exp\Big( \frac{-|x-e^{-t}y|^2}{2t} \Big)
\Big]\\
&\quad -
\frac{nt}{(2t)^{n/2+1}}
\Big[
\exp\Big( \frac{-|x-e^{-t}y|^2}{2t} \Big)
-
\exp\Big(-\frac{|x-y|^2}{2t}\Big) \Big]\\
&=:\sum_{j=1}^4 H_{1j}(t,x,y), \quad x,y\in\Rn, \: t>0.
 \end{align*}
 Now we will use some of the manipulations and estimates from the proof of Lemma \ref{lem1}.
 We have that
$$
|H_{11}(t,x,y)|
\lesssim
\frac{1}{t^{n/2-1}}
\exp\Big( \frac{-|x-e^{-t}y|^2}{1-e^{-2t}} \Big)
\lesssim
\frac{1}{|x-y|^{n-1}}, \quad t\in (0,1), \: (x,y)\in N.
 $$
Since 
 $$
\Big|
\frac{1}{(1-e^{-2t})^{n/2+1}}
-
\frac{1}{(2t)^{n/2+1}}
\Big|
\lesssim
\frac{ 1}{t^{n/2+2}}|2t-1+e^{-2t}|
\lesssim
\frac{1}{t^{n/2}}, \quad  t\in (0,1),
$$
it follows that
$$
|H_{12}(t,x,y)|
\lesssim 
\frac{1}{t^{n/2-1}}
\exp\Big(-\frac{|x-e^{-t}y|^2}{1-e^{-2t}}\Big)
\lesssim
\frac{1}{|x-y|^{n-1}}, \quad  t\in (0,1), \quad (x,y)\in N.\nonumber
$$
 Also, we get
 \begin{align*}
|H_{13}(t,x,y)|
&\lesssim
e^{-|x-e^{-t}y|^2/2t} \frac{|x-e^{-t}y|^2}{t^{n/2}}
\lesssim
\frac{1}{|x-y|^{n-1}}, \quad  t\in (0,1), \quad (x,y)\in N.\nonumber
\end{align*}
 Finally, we obtain
\begin{align*}
| H_{14}(t,x,y)|
&\lesssim
\frac{{1+|x|}}{|x-y|^{n-1}}, 
\quad  0<t<1 \land \frac{1}{|x|^2}, \quad (x,y)\in N.
\end{align*}

 We conclude that
$$
|H_{1}(t,x,y)|
\lesssim 
\frac{{1+|x|}}{|x-y|^{n-1}}, \quad  0<t<1 \land \frac{1}{|x|^2}, \quad (x,y)\in N.\nonumber
 $$
 On the other hand, we can write
\begin{align*}
H_2(t,x,y)
&=
\frac{2t(e^{-(n+2)t}-1)|x-e^{-t}y|^2}{(1-e^{-2t})^{n/2+2}}
\exp\Big( \frac{-|x-e^{-t}y|^2}{1-e^{-2t}} \Big)\\
&\quad +
2t\Big[
\frac{1}{(1-e^{-2t})^{n/2+2}}-\frac{1}{(2t)^{n/2+2}}
\Big] 
|x-e^{-t}y|^2\exp\Big( \frac{-|x-e^{-t}y|^2}{1-e^{-2t}} \Big)\\
&\quad +
\frac{1}{(2t)^{n/2+1}}
[|x-e^{-t}y|^2-|x-y|^2]
\exp\Big( \frac{-|x-e^{-t}y|^2}{1-e^{-2t}} \Big)\\
&\quad +
\frac{1}{(2t)^{n/2+1}}|x-y|^2
\Big[
\exp\Big( \frac{-|x-e^{-t}y|^2}{1-e^{-2t}} \Big)
-
\exp\Big(\frac{-|x-e^{-t}y|^2}{2t}\Big) \Big]\\
&\quad +
\frac{|x-y|^2}{(2t)^{n/2+1}}
\Big[
\exp\Big( \frac{-|x-e^{-t}y|^2}{2t} \Big)
-
\exp\Big(-\frac{|x-y|^2}{2t}\Big)
\Big]\\
&=:
\sum_{j=1}^5 H_{2j}(t,x,y), \quad x,y\in\Rn, \: t>0.
 \end{align*}
 We have that
$$ |H_{21}(t,x,y)|+|H_{22}(t,x,y)|
\lesssim
\frac{1}{|x-y|^{n-1}}, \quad  t\in (0,1), \quad (x,y)\in N.\nonumber
 $$
 Moreover, by proceeding as in \eqref{H4} we get
 \begin{align*}
|H_{23}(t,x,y)|     
& \lesssim 
\frac{|y|}{t^{n/2-1/2}}\exp{\Big(-c\frac{(|x-y| \land |x-e^{-t}y|)^2}{t}\Big)}\\
&\lesssim
\frac{1+|x|}{|x-y|^{n-1}}, 
\quad  (x,y)\in N, 
\quad  
0<t<1 \land \frac{1}{|x|^2}.
 \end{align*}
 For $H_{24}$ we obtain
\begin{align*}
|H_{24}(t,x,y)|
&\lesssim
\frac{|x-y|^2}{t^{n/2+1}} \exp{\Big(-\frac{|x-e^{-t}y|^2}{{2t}}\Big)} {|x-e^{-t}y|^2}
        \\
&\lesssim
\frac{1}{|x-y|^{n-1}}, \quad  t\in (0,1), \quad (x,y)\in N.\nonumber
\end{align*}
Finally, for $H_{25}$ we get
\begin{align*}
|H_{25}(t,x,y)|
&\lesssim
\frac{|x-y|^2}{{t^{n/2+1}}} \exp{\Big(-c\frac{
( |x-y| \land |x-e^{-t}y|)^2
}{t}\Big)}{ \frac{||x-e^{-t}y|^2-|x-y|^2|}{t}} \\
& \lesssim
{\frac{|x-y|^2}{t}\frac{|y|}{t^{n/2-1/2}}\exp{\Big(-c\frac{( |x-y| \land |x-e^{-t}y|)^2}{t}\Big)}}\\
&{\lesssim  \frac{1+|x|}{|x-y|^{n-1}}, \quad  (x,y)\in N, \quad 
0<t<1 \land \frac{1}{|x|^2}.}
\end{align*}
 We conclude that
 $$
 |H_{2}(t,x,y)|
 \lesssim
 { \frac{1+|x|}{|x-y|^{n-1}}, \quad  (x,y)\in N, 
 \quad 0<t<1 \land \frac{1}{|x|^2}.}
 $$

 Finally,
 \begin{align*}
|H_{3}(t,x,y)|  
& \lesssim
\frac{|y|}{t^{n/2-1/2}}
\exp{\Big(-c\frac{ |x-e^{-t}y|^2}{t}\Big)}\\
&\lesssim
\frac{1+|x|}{|x-y|^{n-1}}, \quad  (x,y)\in N, \quad 0<t< 1.\nonumber
 \end{align*}
 
Summarizing, we have obtained that
$$
\sup_{0<t<1 \land 1/|x|^2}  
\Big|t \Big(
\partial_t T_t^\CA(x,y)
-
\partial_t W_t(x-y) \Big) \Big|
\lesssim
\frac{{1+|x|}}{|x-y|^{n-1}}, \quad   (x,y)\in N.
    $$
    
On the other hand, by using that 
$$
1-e^{-2t}
\ge 
1-e^{-2 (1 \land 1/|x|)},
\quad t\ge1 \land \frac{1}{|x|^2}$$
and \eqref{t01}, we get
\begin{align*}
\sup_{t\ge1 \land 1/|x|^2}  
\Big|t\Big( 
\partial_t T_t^\CA(x,y)
-
\partial_t W_t(x-y)\Big)\Big|
&\lesssim
\frac{1}{
(1  \land 1/|x| )^n
}+\frac{|y|}{
(1  \land 1/|x| )^{n-1}
}  \\
& \lesssim
\frac{1+|x|}{|x-y|^{n-1}}, \quad   (x,y)\in N.
\end{align*}

Therefore,
\begin{align*}
\sup_{t>0}  
\Big|t \Big(
\partial_t T_t^\CA(x,y)
-
\partial_t W_t(x-y)\Big) \Big|
\lesssim
\frac{1+|x|}{|x-y|^{n-1}}, \quad   (x,y)\in N.
\end{align*}
Recall that the operator $L$ 
given by \eqref{eq:operatorL}
is bounded from $L^p(\Rn,dx)$ into itself, for every $1\le p <\infty$, see \eqref{Lg}.  
Thus, for every $1\le p<\infty$, $T_{*,1,loc}^{\CA}-W_{*,1,loc}$ is bounded from $L^p_X(\Rn, dx)$ into itself.
Furthermore, 
\cite[Proposition 3.2.5]{Sa} (which is indeed true for $p=1$) also implies the boundedness from  $L^p_X(\Rn, \gamma_{-1})$ into itself.
\end{proof}
\begin{Rem}
Observe that Lemma \ref{Lem4.6} also holds if in the definition of the local operators we replace $N$
 by $N_{\beta}$, for any $\beta>0.$
 \end{Rem}

\begin{Lem}\label{Lem:4.8}
 $T_{*,1,glob}^{\CA}$ is bounded from $L^1_X(\Rn, \gamma_{-1})$ into  $L^{1,\infty}_X(\Rn, \gamma_{-1})$.
\end{Lem}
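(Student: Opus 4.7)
The plan is to follow the same strategy as in Lemma \ref{lem2} (the analogous result for $T_{*,0,glob}^{\CA}$): derive a pointwise estimate on $\sup_{t>0}|t\partial_t T_t^\CA(x,y)|$ in the global region $N^c$, and then invoke \cite[Lemma 3.3.4]{Sa} to conclude the weak-type inequality.

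Starting from the formula for $\partial_t T_t^\CA(x,y)$ displayed in the proof of Lemma \ref{Lem4.6}, I would factor $T_t^\CA(x,y)$ out to write
\begin{equation*}
t\partial_t T_t^\CA(x,y) = T_t^\CA(x,y)\Bigl[-\frac{nt}{1-e^{-2t}} - \frac{2te^{-t}\langle y,\,x-e^{-t}y\rangle}{1-e^{-2t}} + \frac{2te^{-2t}|x-e^{-t}y|^2}{(1-e^{-2t})^2}\Bigr].
\end{equation*}
Using the elementary inequality $u^ke^{-u^2}\lesssim e^{-u^2/2}$, $u\ge 0$, to absorb the factors $|x-e^{-t}y|^k/(1-e^{-2t})^{k/2}$ (for $k=1,2$) into half of the Gaussian exponent, together with the uniform bounds $\frac{t}{1-e^{-2t}}\lesssim 1$ on $(0,1)$, $\frac{te^{-t}}{\sqrt{1-e^{-2t}}}\lesssim 1$ on $(0,\infty)$, and the exponential decay of $T_t^\CA$ for $t\ge 1$, one arrives at
\begin{equation*}
|t\partial_t T_t^\CA(x,y)| \lesssim (1+|y|)\, \frac{e^{-nt}}{(1-e^{-2t})^{n/2}} \exp\Bigl(-\frac{|x-e^{-t}y|^2}{2(1-e^{-2t})}\Bigr),\quad t>0,\ x,y\in\Rn.
\end{equation*}

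Next, I would apply the change of variable $t=\log\frac{1+s}{1-s}$ as in Lemma \ref{lem2} and invoke \cite[Lemma 3.3.3]{Sa}, whose argument adapts directly to the case of the factor $1/2$ in the exponent, to deduce that for $(x,y)\in N^c$ with $x,y\ne 0$,
\begin{equation*}
\sup_{t>0}|t\partial_t T_t^\CA(x,y)| \lesssim (1+|y|)\, e^{-|x|^2+|y|^2}\bigl[(1+|x|)^n \land (|x|\sin\theta(x,y))^{-n}\bigr].
\end{equation*}
The weak-type bound then follows from \cite[Lemma 3.3.4]{Sa}. The extra factor $(1+|y|)$ does not destroy the argument: one can split $N^c$ into the regions $|y|\le 2(1+|x|)$ and $|y|>2(1+|x|)$, absorbing the polynomial factor into $(1+|x|)^{n+1}$ in the first region and offsetting it by the strong Gaussian/angular decay in the second.

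The main obstacle is the cross-term $\langle y,\,x-e^{-t}y\rangle$ in the derivative, which is absent for $T_{*,0}^\CA$ and is responsible for the extra factor $(1+|y|)$. Checking that Salogni's weak-type argument still applies in the presence of this polynomial growth is the only genuinely new technical point; once this is verified, the rest of the proof proceeds exactly as in Lemma \ref{lem2}.
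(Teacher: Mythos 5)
Your proposal correctly identifies the cross-term as the key difficulty, but the strategy you propose for handling it has a genuine gap.

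The pointwise bound you derive is roughly $\sup_{t>0}|t\partial_t T_t^\CA(x,y)|\lesssim(1+|y|)\,e^{|y|^2-|x|^2}\bigl[(1+|x|)^n\land(|x|\sin\theta)^{-n}\bigr]$, and you propose to deal with the extra factor $(1+|y|)$ by splitting $N^c$ into $|y|\le 2(1+|x|)$ and $|y|>2(1+|x|)$. On the first region you replace $(1+|y|)$ by $(1+|x|)$, obtaining a kernel like $(1+|x|)\bigl[(1+|x|)^n\land(|x|\sin\theta)^{-n}\bigr]e^{|y|^2-|x|^2}$, and you then want to invoke Salogni's Lemma~3.3.4. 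But that lemma is calibrated to the kernel $(1+|x|)^n\land(|x|\sin\theta)^{-n}$ with the exponent $n$ exactly matching the dimension, and its weak-type $(1,1)$ conclusion relies on that balance: even on the ``on-axis'' branch, the level set of $(1+R)^n e^{-R^2}$ already produces a borderline quantity, and multiplying by an extra $(1+|x|)$ makes the weak-type inequality fail. Moreover, $(1+|x|)\cdot\bigl[(1+|x|)^n\land(|x|\sin\theta)^{-n}\bigr]$ is not of the form $(1+|x|)^m\land(|x|\sin\theta)^{-m}$ for any $m$, so Lemma~3.3.4 simply does not apply. This is not a routine verification you can ``check'' — it is exactly the point where the result becomes hard.

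The paper's proof reflects this: after decomposing $t\partial_t T_t^\CA$ into $K_1,K_2,K_3$, only the piece $K_3$ (where the extra factor $|x-e^{-t}y|^2/(1-e^{-2t})$ is fully absorbed by the Gaussian) is handled with Salogni's Lemma~3.3.4 alone. The cross-term $K_2$ is treated by a multi-case analysis in the spirit of Bruno--Sj\"ogren (splitting according to the relative position of $r=e^{-t}$ and $r_0=\tfrac{|y|}{|x|}\cos\theta$, cases $i)$, $ii)$, $iii)$), and it produces, in addition to the Salogni-type kernel, a genuinely different contribution of the form
\[
\chi_{\{|x|\,|x-y_x|\ge 1,\ |x|/3\le|y_\perp|<|x|\}}\,\frac{|x|^{(n+1)/2}}{|x-y_x|^{(n-1)/2}}\,
\exp\!\Bigl(-c\,\frac{|y_\perp|^2|x|}{|x-y_x|}\Bigr)\,e^{|y|^2-|x|^2},
\]
for which the paper invokes \cite[Lemmas~4.1--4.4]{BrSj}, not Salogni's Lemma~3.3.4. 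Even the piece $K_1$ ends up requiring \cite[Lemmas~4.2 and~4.3]{BrSj} for the large-$t$ regime $|y|\le 2|x|$, because the extra factor $e^{-|y_\perp|^2}\bigl(\tfrac{|y|}{|x|}\bigr)^{n-1}|x|$ there also falls outside the scope of Lemma~3.3.4. So the additional polynomial factor genuinely changes the kernel geometry and must be controlled by these more refined auxiliary estimates, not by an $(1+|x|)$-absorption argument. Your plan for the region $|y|>2(1+|x|)$ (using the uniform lower bound $|x(1-s)+y(1+s)|\gtrsim|y|$ and the resulting $s$-decay) is sound, but the region $|y|\lesssim 1+|x|$ needs the Bruno--Sj\"ogren machinery, and that is where the real work of the lemma lies.
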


\begin{proof}
We can write
\begin{align}\label{eq:partialtHt}
t\partial_tT_t^\CA(x,y)
& =  - \frac{1}{\pi^{n/2}} \,
\frac{nt e^{-nt}}{(1-e^{-2t})^{(n+2)/2}} \,
\exp\Big( \frac{-|y-e^{-t}x|^2}{1-e^{-2t}} \Big)  \, e^{|y|^2-|x|^2} \nonumber \\
& \quad - \frac{2}{\pi^{n/2}} \,
\frac{te^{-(n+1)t}}{(1-e^{-2t})^{(n+2)/2}} \,
\sum_{i=1}^n  x_i (y_i - e^{-t}x_i) \,
 \exp\Big( \frac{-|y-e^{-t}x|^2}{1-e^{-2t}} \Big) \,
 e^{|y|^2-|x|^2} \nonumber \\
& \quad + \frac{2}{\pi^{n/2}} \,
\frac{te^{-(n+2)t}}{(1-e^{-2t})^{(n+4)/2}} \,
|y-e^{-t}x|^2 \,
 \exp\Big( \frac{-|y-e^{-t}x|^2}{1-e^{-2t}} \Big) \,
 e^{|y|^2-|x|^2} \nonumber \\
& = : \sum_{j=1}^3K_j(t,x,y), \quad x,y\in\Rn, \quad t>0.
\end{align}
We introduce the following global maximal operators
\begin{equation}\label{eq:Kjstar}
K^*_{j,glob}(g)(x)
:=
\int_{\Rn}   \sup_{t>0}  |K_j(t,x,y)| \, \chi_{N^c}(x,y) \, g(y) \, dy ,\quad x\in\Rn,
\quad j=1,2,3.
\end{equation}
We are going to study these operators separately by adapting the ideas developed in the proof of \cite[Proposition 5.1]{BrSj}.\\


\textbf{Step 1: $K^*_{1,glob}$ is bounded from $L^1(\Rn, \gamma_{-1})$ into $L^{1,\infty}(\Rn, \gamma_{-1})$.}\\

Observe that, by taking $r=e^{-t}$, $r\in (0,1),$
\begin{align}\label{eq:K1r}
|K_1(t,x,y)|_{r=e^{-t}}|
& =\frac{n}{\pi^{n/2}} \frac{(-\log r) r^n}{(1-r^2)^{(n+2)/2}} \,
\exp\Big( \frac{-|y-r x|^2}{1-r^2} \Big)  \, e^{|y|^2-|x|^2}, \quad x,y\in\Rn.
\end{align}
To continue the analysis it is convenient to consider the cases $0<r<1/2$ and $1/2 \leq r<1$, or equivalently $\log2<t<\infty$ and $0<t \leq \log2$. Our objective is to estimate
$$
\sup_{0<t\le \log 2}|K_1(t,x,y)|, \quad (x,y)\in N^c,
$$
and
$$\sup_{t
>\log 2}|K_1(t,x,y)|, \quad (x,y)\in N^c.
$$
Assume first that $1/2 \leq r<1$.
Since \eqref{t01} holds, we deduce  the pointwise estimate (see \eqref{eq: 4.01})
\begin{align}\label{maxtLog}
\sup_{0<t\le \log 2}|K_1(t,x,y)|
&\lesssim 
\sup_{0<t\le \log 2}|T_t^\CA(x,y)|\nonumber\\
&\lesssim
e^{|y|^2-|x|^2}\Big[
(1+|x|)^n\land(|x|\sin\theta(x,y))^{-n}
\Big], \quad (x,y)\in N^c, \quad  x,y\neq 0.\,\
\end{align}

Consider now the case of  $0<r<1/2$. Then, \eqref{eq:K1r} can be controlled by
\begin{align*}
|K_1(t,x,y)|_{r=e^{-t}}|
& \lesssim
(-\log r) r^n
e^{ - c|y-r x|^2}  \, e^{|y|^2-|x|^2}
\lesssim
r^{n-1} e^{ - c|y-r x|^2}  \, e^{|y|^2-|x|^2}, \quad x,y\in\Rn.
\end{align*}
Moreover, if $|y| >2|x|$, it follows that
\begin{align*}
\sup_{t>\log 2}|K_1(t,x,y)|
& \lesssim
e^{ -c |y|^2}  \, e^{|y|^2-|x|^2}
\lesssim
\frac{e^{|y|^2-|x|^2}}{|y|^{n-1}}
\lesssim
\frac{e^{|y|^2-|x|^2}}{|x|^{n-1}},\quad (x,y)\in N^c.
\end{align*}

On the other hand,
 by using that $|y-rx|^2=|y_{\perp}|^2+|r-r_0|^2|x|^2$, $x,y\in\Rn,$ we get, for  $|y| \le  2|x|$,
\begin{align*}
|K_1(t,x,y)|_{r=e^{-t}}|
& \lesssim
(|r_0|^{n-1} + |r-r_0|^{n-1})
e^{ -|y_\perp|^2} \,
e^{ -|r-r_0|^2|x|^2} \,
e^{|y|^2-|x|^2} \nonumber \\
& \lesssim
e^{ -|y_\perp|^2}
\Big[\Big(\frac{|y|}{|x|}\Big)^{n-1} e^{ -|r-r_0|^2|x|^2}
+ |r-r_0|^{n-1} e^{ -|r-r_0|^2|x|^2} \Big]
e^{|y|^2-|x|^2} \nonumber \\
& \lesssim
\Big[e^{ -|y_\perp|^2}\Big(\frac{|y|}{|x|}\Big)^{n-1} |x|
+ |x|^{1-n} \Big]
e^{|y|^2-|x|^2}, \quad (x,y) \in N^c,
\end{align*}
where $
r_0:= \frac{|y|}{|x|} \cos \theta
$,  $\theta$ is the angle between $x$ and $y$, and
$
y=:y_x + y_\perp,
$ \text{with}  $y_x \parallel x$
 \text{and} $\quad y_\perp \perp x.  $
 In the last inequality we have taken into account that
$|x| \geq C$, provided that $|x|\ge |y|/2$ and $(x,y) \in N^c$.\\

Therefore,
\begin{align}\label{maxtLog2}
\sup_{t> \log 2}|K_1(t,x,y)|
&\lesssim
e^{|y|^2-|x|^2}\Big[|x|^{1-n}+e^{ -|y_\perp|^2}\Big(\frac{|y|}{|x|}\Big)^{n-1} |x|\,\chi_{\{|y|\le 2|x|\}}
 \Big], \quad (x,y)\in N^c.\,\
\end{align}
By combining \eqref{maxtLog} and \eqref{maxtLog2} we get,
for $(x,y)\in N^c$,
\begin{align*}
\sup_{t>0}|K_1(t,x,y)|
&\lesssim e^{|y|^2-|x|^2}\Big[|x|^{1-n}+e^{ -|y_\perp|^2}\Big(\frac{|y|}{|x|}\Big)^{n-1} |x|\,\chi_{\{|y|\le 2|x|\}}
 \\
 & \qquad \qquad \quad +
(1+|x|)^n\land(|x|\sin\theta(x,y))^{-n}
\Big].
\end{align*}
Then, according to \cite[Lemma 3.3.4.]{Sa} and \cite[Lemmas 4.2 and 4.3]{BrSj} we deduce that  the operator
$K_{1,glob}^*$ is bounded from $L^1(\Rn, \gamma_{-1})$ into $L^{1,\infty}(\Rn,\gamma_{-1})$.\\


\textbf{Step 2: $K_{2,glob}^*$ is bounded from $L^1(\Rn, \gamma_{-1})$ into $L^{1,\infty}(\Rn, \gamma_{-1})$.}\\

Denoting again $r=e^{-t},$ we have
\begin{align}\label{eq:Kt2}
|K_2(t,x,y)|_{r=e^{-t}}|
& \lesssim
\frac{(-\log r) r^{n+1}}{(1-r^2)^{(n+2)/2}}\,
\sum_{i=1}^n | x_i (y_i - r x_i) | \,
\exp\Big( \frac{-|y-r x|^2}{1-r^2} \Big)  \, e^{|y|^2-|x|^2} \nonumber \\
& \lesssim
\frac{(-\log r) r^{n+1}}{(1-r^2)^{(n+1)/2}}\,
 |x| \Big(\frac{|y - r x|^2}{1-r^2}\Big)^{1/2} \,
\exp\Big( \frac{-|y-r x|^2}{1-r^2} \Big)  \, e^{|y|^2-|x|^2} \nonumber \\
& \lesssim
\frac{ r^{n}}{(1-r^2)^{(n+1)/2}}
\, |x| \,
\exp\Big( -\frac{1}{2}\frac{|y-r x|^2}{1-r^2} \Big)  \, e^{|y|^2-|x|^2},
\quad x,y\in\Rn, \: t>0.
\end{align}
As in the previous case,  we want to estimate
$$
\sup_{0<t\le \log 2}|K_2(t,x,y)|, \quad (x,y)\in N^c
$$
and
$$\sup_{t
>\log 2}|K_2(t,x,y)|, \quad (x,y)\in N^c.
$$

Let $0<r<1/2$. If $|y| > 2|x|$, we can control the kernel in \eqref{eq:Kt2} by
\begin{align*}
|K_2(t,x,y)|
& \lesssim
|y|
e^{-c|y|^2}  \, e^{|y|^2-|x|^2}
\lesssim
\frac{e^{|y|^2-|x|^2}}{|y|^{n-1}}
\lesssim
\frac{e^{|y|^2-|x|^2}}{|x|^{n-1}}, 
\quad x,y\in\Rn.
\end{align*}
Furthermore, when $|y| \le 2|x|$, then $|r_0|\le 2$ and we get
\begin{align*}
|K_2(t,x,y)|
& \lesssim
(|r_0|^{n} + |r-r_0|^{n}) |x|
e^{ -|y_\perp|^2} \,
e^{ -|r-r_0|^2|x|^2} \,
e^{|y|^2-|x|^2} \nonumber\\
& \lesssim
\Big[e^{ -|y_\perp|^2}\Big(\frac{|y|}{|x|}\Big)^{n-1} |x|
+ |x|^{1-n} \Big]
e^{|y|^2-|x|^2}, \quad (x,y) \in N^c.
\end{align*}
Therefore,
\begin{align}\label{maxtLog3}
\sup_{t> \log 2}|K_2(t,x,y)|
&\lesssim
e^{|y|^2-|x|^2}\Big[|x|^{1-n}+e^{ -|y_\perp|^2}\Big(\frac{|y|}{|x|}\Big)^{n-1} |x|\,\chi_{\{|y|\le 2|x|\}}
 \Big], \quad (x,y)\in N^c.\,\
\end{align}

Let now $1/2\le r<1$, that is,  $0<t\le\log2$. We decompose  the kernel in \eqref{eq:Kt2} as 
\begin{align*}
|K_2(t,x,y)|
& \lesssim
\frac{(-\log r) r^{n+1}}{(1-r^2)^{(n+1)/2}}
 |x|
\exp\Big( -\frac{1}{2}\frac{|y-r x|^2}{1-r^2} \Big)   e^{|y|^2-|x|^2}\\
&\qquad \times
\Big(
\chi_{\{s\leq 1/3\}}
+
\chi_{\{s\geq 2\}}
+
\chi_{\{1/3< s  \leq 2\}}
\Big)(r_0)\\
& =: \sum_{j=1}^3K_{2j}(t,x,y), \quad x,y\in\Rn.
\end{align*}
Now we proceed as in \cite[p. 12, cases 2.1 and 2.2]{BrSj}.   Since for $r_0\le 1/3$ and $r_0\ge 2$ we have that $|r-r_0|\gtrsim ( 1+|r_0|)$,
we get
\begin{align}\label{K21+K22}
 K_{21}(t,x,y) + K_{22}(t,x,y)
& \lesssim
\frac{|x|}{(1-r^2)^{(n+1)/2}}   \,
\exp\Big( -\frac{1}{2}\frac{|y-r x|^2}{1-r^2} \Big)
e^{|y|^2-|x|^2} \nonumber\\
&   \lesssim
\frac{ (1+|r_0|)|x|  }{(1-r^2)^{(n+1)/2}}   \,
\exp\Big( - c
\frac{(1+|r_0|)^2|x|^2 +|y_\perp|^2}{1-r^2} \Big)
e^{|y|^2-|x|^2} \nonumber \\
&   \lesssim
\Big( (1+|r_0|)|x| \Big)^{1-(n+1)}e^{|y|^2-|x|^2}
\lesssim \frac{e^{|y|^2-|x|^2}}{|x|^n}.
\end{align}

Consider the case $ r_0=\frac{|y|}{|x|}\cos(\theta(x,y))\in (1/3,2)$.
{By performing the change of variables ${r=\frac{1-s}{1+s}}$, we have that $0<s\le 1/3$ and
\begin{align}\label{K23r}
K_{23}(t,x,y)
&\lesssim
\log\Big(\frac{1+s}{1-s}\Big)
\Big(\frac{1-s}{s^{1/2}}\Big)^{n+1}|x|\;
\exp\Big(-\frac{|(1+s)y-(1-s)x|^2}{8s}\Big)
e^{|y|^2-|x|^2}\nonumber\\
&\lesssim  \frac{(1-s)^{n+1}|x|}{s^{(n-1)/2}}
\exp\Big(-\frac{|(1+s)y-(1-s)x|^2}{8s}\Big)
e^{|y|^2-|x|^2}. 
\end{align}
On the other hand, by using
\begin{align*}
|y(1+s)-x(1-s)|^2
&\gtrsim
\frac{1}{(1+|x|)^2}, \quad 0<s\le \frac{1}{8(1+|x|)^2},\quad  (x,y) \in N^c, 
\end{align*}
we get
\begin{align*}
\sup_{0<s<\frac{1}{8(1+|x|)^2}}K_{23}(t,x,y)|_{t=\log\left(\frac{1+s}{1-s}\right)}
&    \lesssim \sup_{0<s<\frac{1}{8(1+|x|)^2}}
\frac{|x|}{s^{(n-1)/2}}
\exp\Big(-\frac{c}{s(1+|x|)^2}\Big)
e^{|y|^2-|x|^2}\\
&\lesssim
|x|(1+|x|)^{n-1}e^{|y|^2-|x|^2}
\lesssim
(1+|x|)^n e^{|y|^2-|x|^2}, 
\, 
(x,y)\in N^c.
\end{align*}
Moreover,
\begin{align*}
\sup_{\frac{1}{8(1+|x|)^2}<s<1}K_{23}(t,x,y)|_{t=\log\left(\frac{1+s}{1-s}\right)}
&\lesssim
\sup_{\frac{1}{8(1+|x|)^2}<s<1}
\frac{|x|}{s^{(n-1)/2}}
e^{|y|^2-|x|^2} \\
& \lesssim
(1+|x|)^ne^{|y|^2-|x|^2}, 
\quad (x,y)\in N^c.
\end{align*}
}
Therefore, we have shown that
\begin{align}\label{K23tlog2}
\sup_{0<t<\log 2}K_{23}(t,x,y)
\lesssim
 (1+|x|)^ne^{|y|^2-|x|^2}, 
 \quad (x,y)\in N^c.
\end{align}

On the other hand, also for $r_0\in (1/3,2),$ we can proceed as in \cite[p. 12]{BrSj} and consider the following scenarios
\begin{itemize}
    \item[$i)$] $1-r \leq \frac{1}{2}(1-r_0) \vee \frac{3}{2}(r_0-1)$,\\
    \item[$ii)$]  $1-r > \frac{3}{2}(1-r_0)$,\\
    \item[$iii)$] $r_0 <1$ and $|r-r_0| < \frac{1}{2}(1-r_0)$.
\end{itemize}
In all these cases, the following properties will be crucial:
\begin{align}
    &|y-rx|^2=|y_{\perp}|^2+|r-r_0|^2|x|^2, \quad  x,y\in\Rn, \: 0<r<1,\label{identyrx}\\
    &1-r_0=\frac{|x-y_x|}{|x|},\quad x,y\in\Rn,\label{1-r0}\\
    &|y_{\perp}|\ge |x|\sin\theta, \quad \text{whenever } 1/3<r_0<2.\label{yperpsin}
\end{align}

We start analyzing the situation $i)$. Observe that under the current assumptions
$$|1-r_0| \approx |r-r_0|
\quad \text{and} \quad
1-r \lesssim |1-r_0|.$$
Then, by using \eqref{identyrx} we get
\begin{align*}
|y-rx|^2
& = |r-r_0|^2|x|^2+|y_\perp|^2
\gtrsim  (1-r)^2|x|^2+|y_\perp|^2.
\end{align*}
Therefore, since  
$0\le \frac{-\log r}{1-r} \lesssim $ for $1/2\le r <1$, by using \eqref{yperpsin} we deduce
\begin{align}\label{eq:K23i}
K_{23}(t,x,y)
& \lesssim
\frac{(1-r)^{1/2}}{(1-r)^{n/2}}
\, |x| \,
\exp\Big( -c\frac{{(1-r)^2}|x|^2+|y_\perp|^2}{(1-r)(1+r)}\Big) \,
e^{|y|^2-|x|^2} \nonumber \\
& \lesssim
\frac{1}{(1-r)^{n/2}}
e^{ -c(1-r)|x|^2}
e^{-c|y_\perp|^2/(1-r) }\, e^{|y|^2-|x|^2}\nonumber \\
& \lesssim
{\frac{ e^{|y|^2-|x|^2}}{|y_{\perp}|^n}}
\lesssim
\frac{ e^{|y|^2-|x|^2}}{(|x| \sin\theta)^n}, 
\quad(x,y)\in N^c.
\end{align}

Next, we turn to the case $ii)$. Now $|r-r_0| \approx 1-r$, so the estimate \eqref{eq:K23i} above remains valid.\\

Finally, let's treat $iii)$. Now,
$1-r \approx 1-r_0$ and
since \eqref{identyrx} and \eqref{1-r0} hold,   we get
\begin{align}\label{eq:K23iii}
  K_{23}(t,x,y)
  & \lesssim
\frac{|x|}{(1-r)^{(n-1)/2}}
\exp\Big( -c\frac{|r-r_0|^2|x|^2+|y_\perp|^2}{1-r} \Big)  \, e^{|y|^2-|x|^2} \nonumber \\
& \lesssim
\frac{|x|^{(n+1)/2}}{|x-y_x|^{(n-1)/2}}
\exp\Big( -c\frac{|y_\perp|^2|x|}{|x-y_x|} \Big)
  \, e^{|y|^2-|x|^2}, \quad(x,y)\in N^c.
\end{align}
Then, for $|x||x-y_x| < 1$,
\begin{align*}
K_{23}(t,x,y)
& \lesssim  \frac{|x|^{(n+1)/2}}{|x-y_x|^{(n-1)/2}}
\Big(\frac{|x-y_x|}{|y_\perp|^2|x|} \Big)^{n/2}
  \, e^{|y|^2-|x|^2} \nonumber \\
& \lesssim
\frac{(|x||x-y_x|)^{1/2}}{|y_\perp|^n}
  \, e^{|y|^2-|x|^2}\nonumber\\
& \lesssim
\frac{ e^{|y|^2-|x|^2}}{(|x| \sin\theta)^n}, \quad(x,y)\in N^c.
\end{align*}
{Now assume that $|x||x-y_x| \geq 1$. We have to distinguish three cases:
$$
|x|/3\le |y_x|<|x|, \quad
|y_x|\ge |x| 
\quad \text{and} \quad
|y_x|<|x|/3.$$

 Observe that when $|x|/3\le |y_x|<|x|$, estimate \eqref{eq:K23iii} is enough, see \cite[Lemma 4.4]{BrSj}.\\
 
Let us consider the situation $|y_x|\ge |x|.$ Then,  since $|y|^2=|y_x|^2+|y_{\perp}|^2\ge |y_x|^2$
we have that $|y|^2\ge |x|^2$ and from  \eqref{K23r} we get that
\begin{align*}
K_{23}(t,x,y)|_{t=\log\left(\frac{1+s}{1-s}\right)}
& \lesssim
(1-s)^{n+1}
\frac{|x+y|+|x-y|}{{s}^{(n-1)/2}} \\
& \qquad \qquad \times
\exp\Big( -\frac{1}{8}(s|x+y|^2+\frac{1}{s}|x-y|^2)\Big)\,
e^{-(|y|^2-|x|^2)/4}  \, e^{|y|^2-|x|^2}\\
& \lesssim
\frac{(1-s)^{n}}{{s}^{n/2}}
\exp\Big(-\frac{|(1+s)y-(1-s)x|^2}{16s}\Big) \,
e^{|y|^2-|x|^2}, \quad(x,y)\in N^c.
\end{align*}
Then, from \cite[Lemma 3.3.3]{Sa} we get that, for  $|y_x|\ge |x|,$
\begin{align}\label{K23:tlog2:yxgex}
 \sup_{0< t\le \log 2}K_{23}(t,x,y)
 \lesssim
e^{|y|^2-|x|^2}
\Big[
(1+|x|)^n\land(|x|\sin\theta)^{-n}
\Big], \quad(x,y)\in N^c.
\end{align}}

{Suppose now $3|y_x|\le |x|$, $(x,y)\in N^c$.   If $ |y_{\perp}|\le |x|/9$, then since $1/2< r=\frac{1-s}{1+s}<1$, we get
\begin{align*}|(1-s)x|
&\le
|(1+s)y-(1-s)x|
+(1+s)\sqrt{\Big(\frac{|x|}{3}\Big)^2+\Big(\frac{|x|}{9}\Big)^2}
\\&\le
|(1+s)y-(1-s)x|+\frac{2\sqrt{10}}{9}|(1-s)x|,
\end{align*}
that is,
$$\Big(1-\frac{2\sqrt{10}}{9}\Big)|(1-s)x|
\le |(1+s)y-(1-s)x|.$$
Therefore, for {$ |y_{\perp}|\le |x|/9$} and $0<s\le 1/3$
\begin{align*}
K_{23}(t,x,y)|_{t=\log\left(\frac{1+s}{1-s}\right)}
&\lesssim
\frac{(1-s)^n|(1+s)y-(1-s)x|}{s^{(n-1)/2}}
\exp\Big(-\frac{|(1+s)y-(1-s)x|^2}{8s}\Big)
e^{|y|^2-|x|^2} \\
&\lesssim
\frac{(1-s)^n}{s^{n/2}}
\exp\Big(-\frac{|(1+s)y-(1-s)x|^2}{16s}\Big)
e^{|y|^2-|x|^2}, \quad (x,y)\in N^c.
   \end{align*}}
{If $ |y_{\perp}|> |x|/9$, then
\begin{align*}
& K_{23}(t,x,y)|_{t=-\log r=\log\left(\frac{1+s}{1-s}\right)} \\
&  \qquad \lesssim 
(-\log r)
\frac{r^{n+1} |x|}{(1-r^2)^{(n+1)/2}} \exp\Big(-\frac{1}{4}\frac{|r-r_0|^2|x|^2+|y_{\perp}|^2}{1-r^2}\Big)
\exp\Big( -\frac{1}{4}\frac{|y-rx|^2}{1-r^2}\Big)
e^{|y|^2-|x|^2} \\
& \qquad  \lesssim 
 \frac{r^{n}|y_{\perp}|}{(1-r^2)^{(n+1)/2}} 
\exp\Big(-\frac{1}{4}\frac{|r-r_0|^2|x|^2+|y_{\perp}|^2}{1-r^2}\Big)
\exp\Big( -\frac{1}{4}\frac{|y-rx|^2}{1-r^2}\Big)
e^{|y|^2-|x|^2}\\
& \qquad  \lesssim 
 \frac{r^{n} }{(1-r^2)^{n/2}}
\exp\Big( -\frac{1}{4}\frac{|y-rx|^2}{1-r^2}\Big)
e^{|y|^2-|x|^2}\\
&\qquad  \lesssim  \frac{(1-s)^n}{s^{n/2}}
\exp\Big(-\frac{|(1+s)y-(1-s)x|^2}{16s}\Big)
e^{|y|^2-|x|^2}, \quad (x,y)\in N^c.
   \end{align*}
Then, from \cite[Lemma 3.3.3]{Sa} we get that, for  $3|y_x|\le |x|$,
\begin{align}\label{K23:3yx}
 \sup_{0<t\le \log 2}K_{23}(t,x,y)
 &\lesssim 
 e^{|y|^2-|x|^2}
\Big[
(1+|x|)^n\land(|x|\sin\theta)^{-n}
\Big], \quad (x,y)\in N^c.
\end{align}}

\quad \\
By combining \eqref{K23tlog2}, \eqref{eq:K23i},  \eqref{eq:K23iii}, \eqref{K23:tlog2:yxgex} and \eqref{K23:3yx} we conclude that
\begin{align}\label{K23:0.5r1}
& \sup_{0<t\le \log 2}K_{23}(t,x,y)
\lesssim 
e^{|y|^2-|x|^2}
\Big[
(1+|x|)^n\land(|x|\sin\theta)^{-n}\nonumber\\
&\qquad+\chi_{\{|x||x-y_x|\ge 1, \frac{|x|}{3}\le |y_\perp|<|x|\}}(x,y)\frac{|x|^{(n+1)/2}}{|x-y_x|^{(n-1)/2}}
\exp\Big( -c\frac{|y_\perp|^2|x|}{|x-y_x|} \Big)
\Big], \quad (x,y)\in N^c.
\end{align}

\quad \\

From \eqref{maxtLog3}, \eqref{K21+K22} and \eqref{K23:0.5r1}, by using
\cite[Lemmas 4.1-- 4.4]{BrSj} we deduce that $K_{2,glob}^*$ is bounded from $L^1(\Rn, \gamma_{-1})$ into $L^{1,\infty}(\Rn, \gamma_{-1})$.

\quad \\
\textbf{Step 3: $K_{3,glob}^*$  is bounded from $L^1(\Rn,\gamma_{-1})$ into $L^{1,\infty}(\Rn,\gamma_{-1})$.} \\

Since
\begin{equation}\label{eq:te-2t}
\frac{t e^{-2t}}{1-e^{-2t}} \le 1,\quad   t>0,
\end{equation}
we can write
\begin{align*}
|K_3(t,x,y)|
& \lesssim
\frac{e^{-nt}}{(1-e^{-2t})^{(n+2)/2}}
|y-e^{-t}x|^2 \,
 \exp\Big( \frac{-|y-e^{-t}x|^2}{1-e^{-2t}} \Big) \,
 e^{|y|^2-|x|^2}, \quad x,y \in \Rn, \, t>0.
\end{align*}

Next, the change of variables
$t=\log \Big( \frac{1+s}{1-s} \Big),$ $0<s<1,$
allows us to write
\begin{align}\label{K3-3}
|K_3(t,x,y)|
&  \lesssim
\frac{(1-s)^n}{s^{n/2}} \,
 \exp\Big( -\frac{|(1+s)y-(1-s)x|^2}{8s} \Big) \,
 e^{|y|^2-|x|^2} \nonumber\\
&  \lesssim
\Big[ (1+|x|)^n \land (|x| \sin \theta)^{-n}\Big] \,
e^{|y|^2-|x|^2}, \quad (x,y) \in N^c,
\end{align}
where in the last step we used \cite[Lemma 3.3.3]{Sa}. From \eqref{K3-3} and  by using  \cite[Lemma 3.3.4]{Sa} we get that
$K_{3,glob}^*$  is bounded from $L^1(\Rn,\gamma_{-1})$ into $L^{1,\infty}(\Rn,\gamma_{-1})$.

\vspace{0.5cm}
Thus, we have proven that the operator $K^*_{glob}$ defined by
$$K^*_{glob}(g)(x)
:=
\int_{\Rn}   \sup_{t>0}  |t\partial_t T_t^\CA(x,y)| \, \chi_{N^c}(x,y) \, g(y) \, dy ,\quad x\in\Rn,
$$
is bounded from $L^1(\Rn,\gamma_{-1})$ into $L^{1,\infty}(\Rn,\gamma_{-1})$, which clearly implies that the operator
$T_{*,1,glob}^{\CA}$ is also bounded from $L^1_X(\Rn, \gamma_{-1})$ into  $L^{1,\infty}_X(\Rn, \gamma_{-1})$.
\end{proof}

\begin{Lem}\label{Lem:4.9}
For every $1<p<\infty$, there exists $\beta>0$ such that the operator  $T_{*,1,glob}^\CA$ associated with the global region $N_\beta^c$ is bounded from
$L^p_X(\Rn,\gamma_{-1})$ into itself.
\end{Lem}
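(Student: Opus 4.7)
The plan is to mirror the strategy used in Lemma \ref{lem3} for the case $\alpha=0$, reducing the $L^p_X(\Rn,\gamma_{-1})$-boundedness to a Schur-type estimate with respect to the Lebesgue measure. Since $X$ is a K\"othe function space, once we have the scalar Schur bound the vector-valued extension is automatic. Concretely, by the identification
$$\|T_{*,1,glob}^{\CA}f\|_{L^p_X(\Rn,\gamma_{-1})}
=\|e^{|\cdot|^2/p}T_{*,1,glob}^{\CA}f\|_{L^p_X(\Rn,dx)},$$
it is enough to show that the integral operator with kernel
$$\mathcal{K}_\beta(x,y):=\sup_{t>0}\big|t\partial_t T_t^{\CA}(x,y)\big|\,\chi_{N_\beta^c}(x,y)\,e^{(|x|^2-|y|^2)/p}$$
is bounded from $L^p(\Rn,dx)$ into itself, which by Schur's lemma reduces to checking
$$\sup_{x\in\Rn}\int_{\Rn}\mathcal{K}_\beta(x,y)\,dy<\infty \quad\text{and}\quad \sup_{y\in\Rn}\int_{\Rn}\mathcal{K}_\beta(x,y)\,dx<\infty.$$

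Next, I would invoke the decomposition $t\partial_t T_t^{\CA}=K_1+K_2+K_3$ from \eqref{eq:partialtHt} and treat each piece separately; the symmetrisation $T_t^\CA(x,y)=T_t^\CA(y,x)e^{|y|^2-|x|^2}$ already visible in \eqref{eq:partialtHt} explains why, after multiplying by the weight $e^{(|x|^2-|y|^2)/p}$, the residual exponent is $(|y|^2-|x|^2)/p'$ with $p'=p/(p-1)$. For each $K_j$ I would derive Perez--Sj\"ogren type pointwise bounds analogous to \eqref{Ttglob}: namely, on $N_\beta^c$,
\begin{align*}
\sup_{t>0}|K_j(t,x,y)|\lesssim
\begin{cases}
e^{-c|x|^2}, & \langle x,y\rangle\le 0,\\[4pt]
|x+y|^{m_j}\exp\!\Big(\dfrac{|y|^2-|x|^2}{2}-\dfrac{|x-y|\,|x+y|}{2}\Big), & \langle x,y\rangle> 0,
\end{cases}
\end{align*}
for some $m_j\in\NN$ depending only on $n$ and $j$. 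These estimates can be extracted by refining the case analysis already carried out in the proof of Lemma \ref{Lem:4.8}, combined with \cite[Proposition 2.1]{MPS} applied to $T_t^\CA(y,x)$ and the bound \eqref{eq:te-2t}.

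With the above pointwise bounds, the portion of the Schur integrals over $\{\langle x,y\rangle\le 0\}$ is trivially finite, since on that region $|x|^2\lesssim |x-y|^2$, so $e^{-c|x|^2}e^{(|x|^2-|y|^2)/p}$ is integrable in $y$ (uniformly in $x$) and symmetrically in $x$. On $\{\langle x,y\rangle>0\}$, using $||y|^2-|x|^2|\le |x-y||x+y|$ I obtain an effective exponential factor
$$\exp\!\Big(-\Big(\tfrac{1}{2}-\big|\tfrac{1}{p}-\tfrac{1}{2}\big|\Big)|x-y||x+y|\Big),$$
with strictly positive rate for $1<p<\infty$. Choosing $\beta$ sufficiently large (depending on $p$, $n$ and on $\max_j m_j$) ensures that on $N_\beta^c$ the product $|x-y||x+y|$ is large enough that this exponential decay dominates the polynomial factor $|x+y|^{m_j}$ after integration, exactly along the lines of \cite[p.~501]{Pe}.

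The main technical obstacle is obtaining the pointwise bounds for $\sup_{t>0}|K_j|$ uniformly in $t$: the presence of $t/(1-e^{-2t})$ and $|y-e^{-t}x|^2/(1-e^{-2t})^2$ in $K_3$ (and the linear factor $\sum_i y_i(x_i-e^{-t}y_i)$ in $K_2$) produces higher polynomial growth in $|x+y|$ than in the $T_{*,0}$ case. Verifying that these contributions still admit a Perez--Sj\"ogren envelope, and that a single choice of $\beta>0$ makes the Schur integrals converge uniformly in $x$ and $y$, is the delicate point; once this is established, the proof concludes just as in Lemma \ref{lem3}.
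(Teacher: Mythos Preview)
Your overall Schur strategy and the reduction to a scalar kernel bound on $N_\beta^c$ match the paper exactly. The divergence is in the pointwise envelope you postulate for $\sup_{t>0}|K_j(t,x,y)|$. You claim a bound with the \emph{full} exponential rate $\exp\big(\tfrac{|y|^2-|x|^2}{2}-\tfrac{|x-y||x+y|}{2}\big)$ at the cost of a larger power $|x+y|^{m_j}$. The paper does not prove (and does not need) such a bound. Instead, for $K_3$ and $K_2$ it sacrifices part of the Gaussian via $u\,e^{-u}\le C_\eta e^{-\eta u}$ (and, for $K_2$, an additional trade $|x|\le |x+y|+|x-y|$ absorbed in the $s$-variable), obtaining
\[
\sup_{t>0}|K_j(t,x,y)|\lesssim |x+y|^n\exp\!\Big(\eta\Big(\tfrac{|y|^2-|x|^2}{2}-\tfrac{|x-y||x+y|}{2}\Big)\Big)
\]
with $\eta<1$ (and two parameters $0<\varepsilon<\eta<1$ for $K_2$). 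This is where $\beta$ enters: one needs $(\sqrt{\eta}\,x,\sqrt{\eta}\,y)\in N_n^c$ whenever $(x,y)\in N_\beta^c$ to invoke \cite[Proposition~2.1]{MPS} for the rescaled kernel, and one chooses $\eta$ (resp.\ $\varepsilon,\eta$) depending on $p$ so that the net exponent $\tfrac{\eta}{2}-|\tfrac{\eta}{2}-\tfrac{1}{p}|$ (resp.\ the analogous quantity) is strictly positive.

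Your version leaves the hardest step---actually proving the full-rate envelope for $K_2$ and $K_3$---unverified; you flag it as ``the delicate point'' but give no argument. I was unable to confirm that the full-rate bound with a mere polynomial correction holds for $K_2$ (the linear factor $\sum_i x_i(y_i-e^{-t}x_i)$ couples the kernel to $|x|$ in a way that, after optimising in $t$, does not obviously reduce to a power of $|x+y|$). The paper's rescaling trick sidesteps this entirely and is what you should use. Also, your remark that $\beta$ must grow with $m_j$ to beat the polynomial is misplaced: once the exponent in $e^{-c|x-y||x+y|}$ is strictly positive, the integral $\int |x+y|^m e^{-c|x-y||x+y|}\,dy$ converges for every $m$ without enlarging $\beta$; in the paper $\beta$ depends on $p$ only through the rescaling needed to apply MPS.
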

\begin{proof}
Let $1<p<\infty$.
It is sufficient to see that there exists $\beta>0$ such that the operator $K^{*,\beta}_{j,glob}$ defined as $K^*_{j,glob}$ 
(see \eqref{eq:Kjstar} above) replacing $N$ by $N_\beta$, is bounded from $L^p(\Rn,\gamma_{-1})$ into itself, for $j=1,2,3.$\\

We have that
$$
\sup_{t>0}|K_1(t,x,y)|
\lesssim 
\sup_{t>0}\frac{1}{(1-e^{-2t})^{n/2}}
\exp\Big(-\frac{|x-e^{-t}y|^2}{1-e^{-2t}}\Big), \quad  x,y\in\Rn.
$$

According to \cite[Proposition 2.1]{MPS} we obtain, for every $(x,y)\in N_n^c$,
\begin{align*}
\sup_{t>0}|K_1(t,x,y)|
&\lesssim
\begin{cases}
e^{-|x|^2},  & \mbox{if  $\langle x,y\rangle \le 0$},\\
{|x+y|}^{n}\exp{\left( \frac{|y|^2-|x|^2}{2}-\frac{|x-y||x+y|}{2}\right)}, &\mbox{if $\langle x,y\rangle>0$}.
\end{cases}
\end{align*}
Then, by proceeding as in the proof of Lemma \ref{lem3} we can see that $K_{1,glob}^{*,\beta}$, with $\beta=n$, is bounded from $L^p(\Rn,\gamma_{-1})$ into itself, for every $1<p<\infty.$\\

On the other hand, from \eqref{eq:partialtHt} and \eqref{eq:te-2t} we get that, for every $\eta\in (0,1)$, 
\begin{align*}
|K_3(t,x,y)|
&\lesssim 
\frac{1}{(1-e^{-2t})^{n/2}} 
\exp\Big(-\eta\frac{|x-e^{-t}y|^2}{1-e^{-2t}}\Big)\\
&\lesssim
\frac{1}{(1-e^{-2t})^{n/2}}
\exp\Big(-\eta\frac{|y-e^{-t}x|^2}{1-e^{-2t}}\Big)
e^{\eta(|y|^2-|x|^2)},\quad  x,y\in\Rn, \: t>0.
\end{align*}
Let $\eta\in (0,1)$ and $\beta>0.$ If $(x,y)\in N_\beta^c,$ then
$$
|\sqrt{\eta} x-\sqrt{\eta} y|
\ge
\sqrt{\eta} \beta \Big( 1 \land \frac{\sqrt{\eta}}{\sqrt{\eta}|x|} \Big)
\ge
\beta{\eta}
\Big( 1 \land \frac{1}{\sqrt{\eta}|x|} \Big).
$$
 We choose $\beta>1$ such that $(\sqrt{\eta} x,\sqrt{\eta} y)\in N_n^c$, provided that $(x,y)\in N_\beta^c.$ Then,  by \cite[Proposition 2.1]{MPS} we get, for $(x,y)\in N_\beta^c$,

\begin{align*}
\sup_{t>0}|K_3(t,x,y)|
&\lesssim
e^{\eta(|y|^2-|x|^2)}\begin{cases}
e^{-\eta|y|^2},  & \mbox{if  $\langle x,y\rangle \le 0$},\\
{|x+y|}^{n}\exp{\left(\eta\left( \frac{|x|^2-|y|^2}{2}-\frac{|x-y||x+y|}{2}\right)\right)}, &\mbox{if $\langle x,y\rangle>0$},
\end{cases}\\
&\lesssim
\begin{cases}
e^{-\eta|x|^2},  & \mbox{if  $\langle x,y\rangle \le 0$},\\
{|x+y|}^{n}\exp{\left(\eta\left( \frac{|y|^2-|x|^2}{2}-\frac{|x-y||x+y|}{2}\right)\right)}, &\mbox{if $\langle x,y\rangle>0$}.
\end{cases}
\end{align*}
Let $1<q<\infty.$ We take $1/q<\eta<1$ and $\beta>0$ such that  $(\sqrt{\eta} x,\sqrt{\eta} y)\in N_n^c$, provided that $(x,y)\in N_\beta^c.$ Then, by proceeding as in the proof of Lemma \ref{lem3}, we have that
$$
\sup_{x\in\Rn}\int_{\Rn}
e^{(|x|^2-|y|^2)/q}
\sup_{t>0}|K_3(t,x,y)|\chi_{N^c_\beta}(x,y)dy<\infty.
$$
Moreover,
$$
\sup_{y\in\Rn}\int_{\Rn}
e^{(|x|^2-|y|^2)/q}
\sup_{t>0}|K_3(t,x,y)|\chi_{N^c_\beta}(x,y)dx<\infty.
$$
Then, for $1<p<\infty$ we can find $\beta_p>0$ such that $K^{*,\beta}_{3,glob}$ is bounded from $ L^p(\Rn,\gamma_{-1})$ into itself, for every $\beta>\beta_p.$\\

On the other hand,
$$
|K_2(t,x,y)| 
\lesssim
\frac{te^{-(n+1)t}}{(1-e^{-2t})^{(n+1)/2}}
|x|
\exp\Big(-\eta\frac{|y-e^{-t}x|^2}{1-e^{-2t}}\Big)
e^{|y|^2-|x|^2},\quad  x,y\in\Rn, \: t>0,
$$
for certain $\eta\in (0,1)$. By performing the change of variables $t=\log\frac{1+s}{1-s}$, $t\in (0,\infty)$, it follows that
$$
\frac{|y-e^{-t}x|^2}{1-e^{-2t}}=\frac{|y(1+s)-x(1-s)|^2}{4s}=\frac{1}{4}(s|x+y|^2+\frac{1}{s}|x-y|^2)+\frac{1}{2}(|y|^2-|x|^2), \quad x,y\in\Rn, \: t>0.
$$
Then,
for every   $x,y\in\Rn$ and  $t>0$,
\begin{align*}
& |x|
\exp\Big(-\eta\frac{|y-e^{-t}x|^2}{1-e^{-2t}}\Big) \\
& \qquad \qquad \lesssim 
(|x+y|+|x-y|)
\exp\Big(-\frac{\eta}{4}(s|x+y|^2+\frac{1}{s}|x-y|^2)\Big) 
\exp\Big(-\frac{\eta}{2}(|y|^2-|x|^2)\Big)\\
& \qquad \qquad \lesssim
\frac{1}{\sqrt{s}}\exp{\Big(-\frac{\varepsilon}{4}(s|x+y|^2+\frac{1}{s}|x-y|^2)\Big)}
\exp\Big(-\frac{\eta}{2}(|y|^2-|x|^2)\Big)\\
& \qquad \qquad \lesssim 
\frac{1}{(1-e^{-2t})^{1/2}}
\exp{\Big(-\varepsilon\frac{|y-e^{-t}x|^2}{1-e^{-2t}}\Big)}
\exp{\Big(\frac{\varepsilon-\eta}{2}(|y|^2-|x|^2)\Big)},
\end{align*}
where $\eta\in (0,1)$ and $\varepsilon\in (0,\eta).$
Thus, we get
for  $ x,y\in\Rn$ and $t>0$,
$$
|K_2(t,x,y)| 
\lesssim
\frac{1}{(1-e^{-2t})^{n/2}} 
\exp{\Big(-\varepsilon\frac{|y-e^{-t}x|^2}{1-e^{-2t}}\Big)}
\exp{\Big(\Big(1-\frac{\eta-\varepsilon}{2}\Big)(|y|^2-|x|^2)\Big)},
$$
with $\eta\in (0,1)$ and $\varepsilon\in (0,\eta).$\\

Let $\eta\in (0,1)$ and $\varepsilon\in (0,\eta).$ We choose $\beta>0$ such that
such that $(\sqrt{\eta} x,\sqrt{\eta} y)\in N_n^c$, for  $(x,y)\in N_\beta^c.$ Then,  by \cite[Proposition 2.1]{MPS} we obtain, for $(x,y)\in N_\beta^c$,
\begin{align*}
\sup_{t>0}|K_2(t,x,y)|
&\lesssim
e^{(1-\frac{\eta-\varepsilon}{2})(|y|^2-|x|^2)}
\begin{cases}
e^{-\varepsilon|y|^2},  & \mbox{if  $\langle x,y\rangle \le 0$},\\
{|x+y|}^{n}\exp{\left(\varepsilon\left( \frac{|x|^2-|y|^2}{2}-\frac{|x-y||x+y|}{2}\right)\right)}, &\mbox{if $\langle x,y\rangle>0$},
\end{cases}\\
&\lesssim
\begin{cases}
\exp{\left(\left(1-\frac{\eta+\varepsilon}{2}\right)|y|^2-\left(1-\frac{\eta-\varepsilon}{2}\right)|x|^2\right)},  & \mbox{if  $\langle x,y\rangle \le 0$},\\
{|x+y|}^{n}\exp{\left(\left( 1-\frac{\eta}{2}\right)(|y|^2-|x|^2)-\varepsilon\frac{|x-y||x+y|}{2}\right)}, &\mbox{if $\langle x,y\rangle>0$}.
\end{cases}
\end{align*}
Let $1<q<\infty$. We have that
\begin{align*}
& \int_{\Rn}
e^{(|x|^2-|y|^2)/q}\sup_{t>0}|K_2(t,x,y)|\chi_{N^c_\beta}(x,y)dy \\
& \qquad  \lesssim
\int_{\langle x,y\rangle \le 0}
\exp\Big(-|y|^2\Big(-1+\frac{\eta+\varepsilon}{2}+\frac{1}{q}\Big)\Big)
\exp\Big(-|x|^2\Big(1-\frac{\eta-\varepsilon}{2}-\frac{1}{q}\Big)\Big)
dy\\
& \qquad  \qquad + 
\int_{\langle x,y\rangle >0}{|x+y|}^{n}\exp{\Big(\Big( 1-\frac{\eta}{2}-\frac{1}{q}\Big)(|y|^2-|x|^2)-\varepsilon\frac{|x-y||x+y|}{2}\Big)} \\
& \qquad  \lesssim
 \int_{\Rn}
\exp\Big(-|y|^2\Big(-1+\frac{\eta+\varepsilon}{2}+\frac{1}{q}\Big)\Big)
\exp\Big(-|x|^2\Big(1-\frac{\eta-\varepsilon}{2}-\frac{1}{q}\Big)\Big) dy\\
& \qquad   \qquad+ \int_{\Rn}{|x+y|}^{n}\exp{\Big(-{|x-y||x+y|}\Big(\frac{\varepsilon}{2}- \Big|1-\frac{\eta}{2}-\frac{1}{q}\Big|\Big)\Big)} dy .
\end{align*}
Then, for $0<\varepsilon<\eta<1$ such that $$\frac{\varepsilon+\eta}{2}>1-\frac{1}{q}>\frac{\eta-\varepsilon}{2},$$
we get
$$
\sup_{x\in\Rn}\int_{\Rn}
e^{(|x|^2-|y|^2)/q}
\sup_{t>0}|K_2(t,x,y)|\chi_{N^c_\beta}(x,y)dy<\infty
$$
and
$$
\sup_{y\in\Rn}\int_{\Rn}
e^{(|x|^2-|y|^2)/q}
\sup_{t>0}|K_2(t,x,y)|\chi_{N^c_\beta}(x,y)dx<\infty.
$$
Therefore, for each $1<p<\infty$, there exists $\beta_p>0$ such that the operator $K^{*,\beta}_{2,glob}$ is bounded from $ L^p(\Rn,\gamma_{-1})$ into itself, for every $\beta>\beta_p$.\\

We conclude that for every  $1<p<\infty$ we can find $\beta_p>0$ such that,  for every $\beta>\beta_p$, the operator $T_{*,1,glob}^\CA$ associated with $N_\beta$ is bounded from $ L^p_X(\Rn,\gamma_{-1})$ into itself.
\end{proof}

\begin{proof}[Proof of Proposition \ref{Prop4.5}]
The proof follows the same arguments used in the proof of Proposition \ref{Prop4.1}. We simply observe that
\begin{equation*}
    \sup_{t>0} |t\partial_tW_t(z)|
    \lesssim
    \frac{1}{|z|^n}, \quad z\in\Rn\setminus\{0\}. \qedhere
\end{equation*}
\end{proof}

\subsection{Proof of Theorem \ref{Th:1.1}}\label{subsect:Th1.2}

By using Theorem \ref{Th:1.4} and Propositions \ref{Prop4.1} and \ref{Prop4.5},  we can see that the properties $(a),(b_\alpha)$ and $(c_\alpha)$ are equivalent for $\alpha=0$ and also that $(a)\Rightarrow (b_\alpha)\,\,\,and\,\,\,(c_\alpha)$, for $\alpha=1$.\\

Now, suppose that $(a)$ holds. 
Let 
$1\le p<\infty$
and $f\in L^p_X(\Rn,\gamma_{-1})$. 
We have that
\begin{equation}\label{eq: 4.19}
\int_{\Rn} \frac{\|f(y,\cdot)\|_X}{|x-y|^n}\chi_{N^c}(x,y) dy 
\lesssim 
\frac{1}{(
1 \land 1/|x|)^n
}\|f\|_{L^p_X(\Rn,\gamma_{-1})}, \quad x\in\Rn.
\end{equation}

We denote by 
$\mathbb{Q}_+:=\{r_j\}_{j\in\mathbb{N}}$, the set of rational numbers in $(0,\infty)$. According to \eqref{eq:4.1} and \eqref{eq: 4.19}, for every $k\in\NN$, 
$$
\sup_{j=1,\dots,k}
\Big|W_{r_j}
\Big(f(\cdot,\omega)\chi_{N^c}(x,\cdot)\Big)(x)\Big|\in X, 
\quad x\in\Rn.
$$ 
If $X$ has the Fatou property and
$$
\Big\|\sup_{j=1,\dots,k}
\Big| W_{r_j}
\Big(f(\cdot,\omega)\chi_{N^c}(x,\cdot)\Big)(x)\Big|
\Big\|_X
\lesssim \frac{1}{
(1\land 1/|x|)^n
}\|f\|_{L^p_X(\Rn,\gamma_{-1})}, 
\quad x\in\Rn,
$$
then
$$\sup_{j\in \NN}
\Big|W_{r_j}
\Big(f(\cdot,\omega)\chi_{N^c}(x,\cdot)\Big)(x)\Big|\in X, \quad x\in \Rn.$$
By continuity we deduce that 
$$\sup_{t>0}
\Big|W_{t}
\Big(f(\cdot,\omega)\chi_{N^c}(x,\cdot)\Big)(x)\Big|\in X, \quad x\in\Rn.$$

On the other hand, since $L^p_X(\Rn,\gamma_{-1})\subseteq L^p_X(\Rn,dx)$, Theorem \ref{Th:1.4} implies that $$W_{*,0}(f)(x)\in X,
\quad 
\text{for almost all }x\in\Rn.$$ Therefore, 
$$W_{*,0,loc}(f)(x)\in X,
\quad 
\text{for almost all } x\in\Rn.$$
 This means that
 $$W_{*,0,loc}(f)(x)\in X,
 \quad 
 \text{for every } x\in E \subseteq \Rn,
 \quad
 |\Rn\setminus E|=0.$$
 Moreover, since for every $t>0,$  
$$
|W_{t,0,loc}(f)(x)|
\le 
W_{*,0,loc}(f)(x),
\quad x\in\Rn,$$
we have that 
$$W_{t,0,loc}(f)(x)\in X,
\quad 
\text{for every }
x\in E.$$
Observe that the set $E$ does not depend on $t.$\\

 Now, by using \eqref{TWle1} we have that
 \begin{align*}
|T_{{t},0,loc}^\CA    (f)(x,\omega)- W_{t,0,loc}(f)(x, \omega)|
\lesssim
\int_{\Rn }\frac{1+|x|}{|x-y|^{n-1}}|f(y,\omega)|\chi_{N}(x,y)dy<\infty,
 \end{align*}
for $x\in F$, where 
$F\subseteq \Rn$, $|\Rn\setminus F|=0$ and 
$\omega\in \Omega.$ 
Therefore, for every $t>0,$ 
$$(T_{t,0,loc}^\CA-W_{t,0,loc})(f)(x,\cdot)\in X, \quad x\in F .$$
It follows that 
$$
T_{t,0,loc}^\CA(f)(x,\cdot)\in X,
\quad x\in F\cap E, \quad t>0.$$ Observe that $|\Rn\setminus (E\cap F)|=0.$ \\

Let $\NN:=\{t_k\}_{k=1}^\infty.$ We can write
$$
\sup_{k=1,\dots,\ell}|T_{t_k,0,loc}^\CA(f)(x,\omega)|\in X, \quad x\in F\cap E, \: \omega\in\Omega, \:\ell\in\NN.
$$
Moreover, for $x\in F\cap E$, $\ell\in\NN$ and $\omega\in\Omega,$ we have
\begin{align}\label{Ttlocf}
    \sup_{k=1,\dots,\ell}|T_{t_k,0,loc}^\CA(f)(x,\omega)|&\le |\sup_{k=1,\dots,\ell}|T_{t_k,0,loc}^\CA(f)(x,\omega)|-\sup_{k=1,\dots,\ell}|W_{t_k,0,loc}(f)(x,\omega)||\nonumber\\
    &\qquad+\sup_{t>0}|W_{t,0,loc}(f)(x,\omega)|\nonumber\\
    &\le \sup_{k=1,\dots,\ell}|T_{t_k,0,loc}^\CA(f)(x,\omega)-W_{t_k,0,loc}(f)(x,\omega)|\nonumber\\
    &\qquad+\sup_{t>0}|W_{t,0,loc}(f)(x,\omega)|\nonumber\\
    &\lesssim \int_{\Rn }\frac{1+|x|}{|x-y|^{n-1}}|f(y,\omega)|\chi_{N}(x,y)dy+\sup_{t>0}|W_{t,0,loc}(f)(x,\omega)|.
\end{align}
Then, Fatou's property implies that $$
\sup_{t>0}|T_{t,0,loc}^\CA(f)(x,\omega)|=\sup_{k\in\NN}|T_{t_k,0,loc}^\CA(f)(x,\omega)|\in X, 
\quad x\in F\cap E,$$ 
and the expression in \eqref{Ttlocf} controls 
$$
\sup_{t>0}|T_{t,0,loc}^\CA(f)(x,\omega)|,
\quad x\in F\cap E, \quad \omega\in\Omega.$$

From the arguments above we conclude that 
$$T_{*,0,loc}^\CA(f)(x)\in X,
\quad 
\text{for almost all } x\in\Rn.$$

Since {from \eqref{Ttglob} we have that}
$$
\int_{\Rn}\sup_{t>0}|T_t^\CA(x,y)|\chi_{N^c}(x,y)\|f(y,\cdot)\|_Xdy<\infty, \: \text{ for almost all } {x\in\Rn}, $$
we can deduce that $$T_{*,0,glob}^\CA(f)(x)\in X,
\quad 
\text{for almost all } 
x\in\Rn.$$
Thus, we obtain that
$$
T_{*,0}^\CA(f)(x)\in X,
\quad 
\text{for almost all } x\in\Rn.$$

By proceeding in a similar way we can see that $(a)\Rightarrow (d_\alpha)\,\,\,and\,\,\,(e_\alpha)$, for $\alpha=1$.\\

Moreover, as  in \eqref{3*2} we can see that, for every $\alpha\in (0,1)$, $1\le p <\infty$ and $f\in L^p_X(\Rn,\gamma_{-1}),$
$$
 T_{*,\alpha}^\CA(f)\le  T_{*,1}^\CA(f).
$$
Hence, if $X$ has the Hardy- Littlewood property, we can deduce the properties established in this theorem for $ T_{*,\alpha}^\CA$, $0<\alpha<1$, from the corresponding ones of $T_{*,1}^\CA(f).$\\

Suppose now that $1\le p<\infty$ and that, for every $f\in L^p_X(\Rn,\gamma_{-1})$,  $$T_{*,0}^\CA(f)(x)\in X,
\quad 
\text{for almost all } x\in\Rn.$$ Then, 
$$T_{*,0,loc}^\CA(f)(x)\in X,
\quad 
\text{for almost all } x\in\Rn.$$

 As above we can see that $$W_{*,0,glob}(f)(x)\in X,
 \quad 
 \text{for every } x\in\Rn.$$
 Let $g\in L^p_X(\Rn,dx)$ and  $k\in\NN.$ We denote by $g_k:=g\chi_{B(0,k+1)}.$ It is clear that $g_k\in L^p_X(\Rn,\gamma_{-1})$. Since 
 $$
 T_{*,0,loc}^\CA(g_k)(x)
 =T_{*,0,loc}^\CA(g)(x), 
 \quad |x|\le k,$$
 then
$$ T_{*,0,loc}^\CA(g)(x)\in X,
\quad 
\text{for almost all }x\in\Rn.$$ Moreover, by using the estimates established in the proof of Lemma \ref{lem1}, since for $|x|\le k$, 
 $$
 (T_{*,0,loc}^\CA(g_k)(x)-W_{*,0,loc})(g_k)(x)
 =
 (T_{*,0,loc}^\CA(g)(x)-W_{*,0,loc})(g)(x),$$
 then
 $$
 (T_{*,0,loc}^\CA(g)(x)-W_{*,0,loc})(g)(x)\in X, 
 \quad x\in\Rn.$$ 
 Then, we conclude that
 $$W_{*,0,loc}(g)(x)\in X,
 \quad 
 \text{for almost all } x\in\Rn.$$ Thus, since from \eqref{eq:4.1} we also have that $W_{*,0,glob}({g})(x) \in X$, and we conclude 
 $$W_{*,0}({g})(x)\in X,
 \quad 
 \text{for almost all } x\in\Rn.$$ Theorem \ref{Th:1.4} implies now that $X$ has the Hardy- Littlewood property.
\qed

\section{Proof of Theorem \ref{Th:1.2}}

According to the subordination formula, we have that
$$
P_t^\CA(f)(x):= \frac{t}{{2 }\sqrt{\pi}}
\int_0^\infty
\frac{ e^{- t^2/4u}}{u^{3/2}} T_u^\CA(f)(x)\, du, \quad x\in\Rn,
$$
and
$$
P_t(f)(x):= \frac{t}{{2} \sqrt{\pi}}
\int_0^\infty
\frac{ e^{- t^2/4u}}{u^{3/2}} W_u(f)(x)\, du, \quad x\in\Rn.
$$
Let $\alpha>0$, $1\le p <\infty$ and $f\in L^p_X(\Rn,\gamma_{-1}).$ By using Fubini's theorem we get
\begin{align*}
& \partial_t^\alpha P_{t,loc}^\CA(f)(x)-\partial_t^\alpha P_{t,loc}(f)(x) \\
& \qquad  =
\frac{1}{{2} \sqrt{\pi}}
\int_0^\infty
\partial_t^\alpha(te^{-t^2/4u}) \Big(T_{u,loc}^\CA(f)(x)-W_{u,loc}(f)(x)\Big)\, \frac{du}{u^{3/2}}, \quad  x\in\Rn.
\end{align*}
From \cite[Lemma 3]{BCCFR}, we have that
$$
|\partial_t^\alpha
(t e^{- t^2/4u})|
\lesssim
e^{-t^2/8u}
u^{(1-\alpha)/2}, 
\quad t,u\in (0,\infty).$$

Then, we get that  
$$
|t^\alpha \partial_t^\alpha P_t(z)|
\lesssim
t^\alpha \int_0^\infty
\frac{ e^{-t^2/8u}}{u^{(\alpha+2)/2}}W_u(z)du
\lesssim \frac{1}{|z|^n}, 
\quad z\in\Rn\setminus\{0\}.$$

Also, for every $x\in\Rn$,
\begin{align*}
|\partial_t^\alpha P_{t,loc}^\CA(f)(x)
-\partial_t^\alpha P_{t,loc}(f)(x)|
&\lesssim
\int_0^\infty \frac{ e^{-t^2/8u}}{u^{(\alpha+2)/2}}
|T_{u,loc}^\CA(f)(x)-W_{u,loc}(f)(x)|du\\
&\lesssim
t^{-\alpha}\sup_{u>0}
|T_{u,loc}^\CA(f)(x)-W_{u,loc}(f)(x)|.
\end{align*}
Hence,
\begin{align*}
 \sup_{t>0} \Big|  t^\alpha
 \Big(\partial_t^\alpha P_{t,loc}^\CA(f)(x)-\partial_t^\alpha P_{t,loc}(f)(x)\Big) \Big|
 &\lesssim
 \sup_{u>0}|T_{u,loc}^\CA(f)(x)-W_{u,loc}(f)(x)|.
\end{align*}
    Furthermore, we have that
       \begin{align*}
   \sup_{t>0} |  t^\alpha\partial_t^\alpha P_{t,glob}^\CA(f)(x)|&
   \lesssim
   \sup_{u>0}|T_{u,glob}^\CA(f)(x)|,\; \:x\in\Rn.
    \end{align*}
Now, by using these estimates and some of the results obtained in the proof of Theorems \ref{Th:1.1} and \ref{Th:1.4}, we deduce the following proposition.

\begin{Prop}
Let $\alpha>0.$
\begin{itemize}
    \item[(i)] For $1<p<\infty,$ the operator $P_{*,\alpha}^\CA$ is bounded from $L^p_X(\Rn,\gamma_{-1})$ into itself if, and only if, $P_{*,\alpha}$ is bounded from $L^p_X(\Rn,dx)$ into itself.
    \item[(ii)]The operator $P_{*,\alpha}^\CA$ is bounded from $L^1_X(\Rn,\gamma_{-1})$ into $L^{1,\infty}_X(\Rn,\gamma_{-1})$ if, and only if, $P_{*,\alpha}$ is bounded from ${L^1_X(\Rn,dx)}$ into ${L^{1,\infty}_X(\Rn,dx)}$ .
\end{itemize}
\end{Prop}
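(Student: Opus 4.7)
The plan is to mirror the proof of Proposition~\ref{Prop4.1} verbatim, replacing the pair $(T_{*,0}^\CA, W_{*,0})$ by $(P_{*,\alpha}^\CA, P_{*,\alpha})$ and relying on the three pointwise estimates displayed immediately before the statement. No genuinely new estimate is required; the role of each new ingredient has already been identified in the preamble, so the proof is essentially an assembly.

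The global piece is free. From $\sup_{t>0}|t^\alpha \partial_t^\alpha P_{t,glob}^\CA f(x)| \lesssim T_{*,0,glob}^\CA f(x)$, Lemmas~\ref{lem2} and \ref{lem3} yield that $P_{*,\alpha,glob}^\CA$ is bounded from $L^1_X(\Rn,\gamma_{-1})$ into $L^{1,\infty}_X(\Rn,\gamma_{-1})$ and from $L^p_X(\Rn,\gamma_{-1})$ into itself for every $1<p<\infty$. On the Lebesgue side, the size estimate $|t^\alpha \partial_t^\alpha P_t(z)| \lesssim 1/|z|^n$ provides the analogous bound of $P_{*,\alpha,glob}$. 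For the local piece, the displayed comparison
$$
\sup_{t>0} \Big| t^\alpha \big( \partial_t^\alpha P_{t,loc}^\CA f(x) - \partial_t^\alpha P_{t,loc} f(x) \big) \Big| \lesssim \sup_{u>0} \big| T_{u,loc}^\CA f(x) - W_{u,loc} f(x) \big|
$$
together with the kernel bound $\int \sup_{u>0} |T_u^\CA(x,y) - W_u(x,y)| \chi_N(x,y)\, (\cdot)\, dy$ analysed in Lemma~\ref{lem1} implies that $P_{*,\alpha,loc}^\CA - P_{*,\alpha,loc}$ is bounded on $L^p_X(\Rn,dx)$ and on $L^p_X(\Rn,\gamma_{-1})$ for every $1 \le p < \infty$ (and with the corresponding weak-type at $p=1$).

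The remaining link between the $L^p_X(\Rn,\gamma_{-1})$-boundedness of $P_{*,\alpha,loc}$ and its $L^p_X(\Rn,dx)$-boundedness is provided by Salogni's transference \cite[Propositions 3.2.5 and 3.2.7]{Sa}, whose applicability is secured precisely by the size bound $|t^\alpha \partial_t^\alpha P_t(z)| \lesssim 1/|z|^n$. Finally, the dilation invariance of $P_{*,\alpha}$ upgrades local Lebesgue boundedness to full Lebesgue boundedness, exactly as in \cite[Theorem~1.10]{HTV1}. Assembling these four ingredients along the two chains of implications, as in the proof of Proposition~\ref{Prop4.1}, delivers (i) and (ii); I do not foresee any genuine obstacle, since all delicate kernel estimates have already been established and only their organisation remains.
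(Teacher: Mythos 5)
Your proposal is correct and takes essentially the same approach as the paper, which itself only sketches the argument (``by using these estimates and some of the results obtained in the proof of Theorems \ref{Th:1.1} and \ref{Th:1.4}, we deduce the following proposition''): you supply the details of that deduction by replaying the two-chain scheme of Proposition \ref{Prop4.1} with the three displayed comparison and size estimates replacing the corresponding inputs for $(T^{\mathcal{A}}_{*,0},W_{*,0})$.
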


\begin{proof}[Proof of Theorem \ref{Th:1.2}.]
From Theorem \ref{Th:1.4}, it follows that
$$
(a)\Leftrightarrow (b_\alpha)\Leftrightarrow (c_\alpha), \,\,\,when\,\,\,\alpha=0
$$
and
$$
(a)\Rightarrow (b_\alpha) \text{ and } (c_\alpha), \text{ for every }\alpha\ge 0.
$$

Also, we have that
\begin{align*}
\sup_{t>0} \Big|  t^\alpha
\Big(\partial_t^\alpha P_t^\CA(x,y)
-\partial_t^\alpha P_{t}(x,y)\Big)\Big|
&\lesssim
\sup_{u>0}|T_{u}^\CA(x,y)-W_{u}(x,y)|,\; \:x,y\in\Rn,
    \end{align*}
and
$$
  \sup_{t>0} |  t^\alpha\partial_t^\alpha P_{t}^\CA(x,y)|\lesssim \sup_{u>0}|T_{u}^\CA(x,y)|,\; \:x,y\in\Rn.
$$
By proceeding as in the proof of Theorem \ref{Th:1.1} we deduce from the estimates above that
\begin{equation*}
(a)\Rightarrow (d_\alpha) \text{ and } (e_\alpha), \text{ for every }\alpha\ge 0.
 \qedhere
\end{equation*} 
\end{proof}

\bibliographystyle{siam}
\bibliography{references}
\quad \\

\end{document}